\def\clap#1{\hbox to 0pt{\hss#1\hss}}
\def\mathclap{\mathpalette\mathclapinternal}
\def\mathclapinternal#1#2{%
\clap{$\mathsurround=0pt#1{#2}$}}
\DeclareFontFamily{OMS}{rsfs}{\skewchar\font'60}
\DeclareFontShape{OMS}{rsfs}{m}{n}{<-5>rsfs5 <5-7>rsfs7 <7->rsfs10 }{}
\DeclareSymbolFont{rsfs}{OMS}{rsfs}{m}{n}
\DeclareSymbolFontAlphabet{\scr}{rsfs}
\newcommand{\sA}{\scr{A}}
\newcommand{\sB}{\scr{B}}
\newcommand{\sC}{\scr{C}}
\newcommand{\sF}{\scr{F}}
\newcommand{\sG}{\scr{G}}
\newcommand{\sH}{\scr{H}}
\newcommand{\sJ}{\scr{J}}
\newcommand{\sL}{\scr{L}}
\newcommand{\sO}{\scr{O}}
\newcommand{\sQ}{\scr{Q}}
\newcommand{\sS}{\scr{S}}
\newcommand{\sT}{\scr{T}}
\newcommand{\bC}{\mathbb{C}}
\newcommand{\bN}{\mathbb{N}}
\newcommand{\bP}{\mathbb{P}}
\newcommand{\bQ}{\mathbb{Q}}
\newcommand{\bR}{\mathbb{R}}
\newcommand{\bZ}{\mathbb{Z}}
\DeclareMathOperator{\codim}{codim}
\DeclareMathOperator{\Hom}{Hom}
\DeclareMathOperator{\NE}{NE}
\DeclareMathOperator{\Pic}{Pic}
\DeclareMathOperator{\rank}{rank}
\DeclareMathOperator{\reg}{reg}
\DeclareMathOperator{\Sym}{Sym}
\DeclareMathOperator{\supp}{supp}
\DeclareMathOperator{\tor}{tor}
\DeclareMathOperator{\nklt}{{nklt}}
\newcommand{\wtilde}{\widetilde}
\newcommand{\what}{\widehat}
\newcommand{\sHom}{\scr{H}\negmedspace om}
\newcounter{thisthm}
\newcommand{\iref}[1]{(\thesection.\the\value{thisthm}.\the\value{#1})}
\theoremstyle{plain}    
\newtheorem{thm}{Theorem}[section]
\newtheorem{defn}[thm]{Definition}
\newtheorem{assumption}[thm]{Assumption}
\numberwithin{equation}{thm}
\numberwithin{figure}{section}
\theoremstyle{plain}    
\newtheorem{cor}[thm]{Corollary}
\newtheorem{lem}[thm]{Lemma}
\newtheorem{awlog}[thm]{Assumption without loss of generality}
\newtheorem{consequence}[thm]{Consequence}
\newtheorem{fact}[thm]{Fact}
\newtheorem{question}[thm]{Questions}
\theoremstyle{plain}    
\newtheorem{prop}[thm]{Proposition}
\newtheorem{proclaim-special}[thm]{\specialthmname}
\theoremstyle{remark}
\newtheorem{rem}[thm]{Remark}
\newtheorem{remq}[thm]{Remark and Question}
\newtheorem{obs}[thm]{Observation}
\newtheorem{subrem}[equation]{Remark}
\newtheorem*{claim*}{Claim} 
\newtheorem{construction}[thm]{Construction}
\newtheorem{example}[thm]{Example}
\newtheorem{setting}[thm]{Setting}
\newtheoremstyle{bozont-remark}{3pt}{3pt}%
     {}
     {}
     {\it}
     {.}
     {.5em}
     {\thmname{#1}\thmnumber{ #2}: \thmnote{\sc #3}}
\theoremstyle{bozont-remark}
\def\factor#1.#2.{\left. \raise 2pt\hbox{$#1$} \right/\hskip -2pt\raise
  -2pt\hbox{$#2$}} 
\newlength{\swidth}
\newenvironment{enumerate-p}{
  \begin{enumerate}}
  {\setcounter{equation}{\value{enumi}}\end{enumerate}}
\definecolor{tomato}{RGB}{180,62,39}
\definecolor{forrest}{RGB}{81,133,49}
\definecolor{lighttomato}{RGB}{253,65,65}
\definecolor{lightforrest}{RGB}{145,237,87}
\definecolor{mygreen}{RGB}{40,104,69}
\definecolor{mygreen2}{RGB}{3,149,39}
\definecolor{darkolivegreen}{RGB}{102,118,75}
\definecolor{cranegreen}{RGB}{102,118,75}
\definecolor{mydarkblue}{RGB}{10,92,153}
\definecolor{myblue}{RGB}{57,222,186}
\definecolor{pinkish}{RGB}{213,83,222}
\definecolor{colD}{RGB}{213,83,222}
\definecolor{defb}{RGB}{213,83,222}
\definecolor{goldenrod}{RGB}{225,115,69}
\definecolor{mauve}{RGB}{224, 176, 255}
\definecolor{fuchsia}{RGB}{255, 0, 255}
\definecolor{lavender}{RGB}{230, 230, 250}
\definecolor{gold}{RGB}{255, 215, 0}
\definecolor{orange}{RGB}{255, 127, 0}
\definecolor{maroon}{RGB}{123, 17, 19}
\definecolor{brightmaroon}{RGB}{195, 33, 72}
\definecolor{richmaroon}{RGB}{176, 48, 96}
\definecolor{green}{RGB}{3,149,39}
\title[Reflexive differential forms on singular spaces]{Reflexive differential forms on singular spaces -- Geometry and Cohomology}
\author{Daniel Greb}
\author{Stefan Kebekus}
\author{Thomas Peternell}
\thanks{All three authors were supported in part by the DFG-Forschergruppe
  ``Classification of Algebraic Surfaces and Compact Complex Manifolds''. Parts
  of this paper were written while the first named author enjoyed the hospitality
  of the Mathematics Department at Princeton University. He gratefully
  acknowledges the support of the Baden--Württemberg--Stiftung through the
  ``Eliteprogramm für Postdoktorandinnen und Postdoktoranden''.}
\keywords{singularities of the minimal model program, differential forms, vanishing theorems, Hodge theory}
\subjclass[2010]{Primary: 14F10; Secondary: 14F17, 14E30, 32S05, 32S20}
\address{Daniel Greb, Mathematisches Institut, Albert-Ludwigs-Universität
  Freiburg, Eckerstra{\ss}e 1, 79104 Freiburg im Breisgau, Germany}
\email{\href{mailto:daniel.greb@math.uni-freiburg.de}{daniel.greb@math.uni-freiburg.de}}
\urladdr{\href{http://home.mathematik.uni-freiburg.de/dgreb}{http://home.mathematik.uni-freiburg.de/dgreb}}
\address{Stefan Kebekus, Mathematisches Institut, Albert-Ludwigs-Universität
  Freiburg, Eckerstra{\ss}e 1, 79104 Freiburg im Breisgau, Germany}
\email{\href{mailto:stefan.kebekus@math.uni-freiburg.de}{stefan.kebekus@math.uni-freiburg.de}}
\urladdr{\href{http://home.mathematik.uni-freiburg.de/kebekus}{http://home.mathematik.uni-freiburg.de/kebekus}}
\address{Thomas Peternell, Mathematisches Institut, Universität
  Bayreuth, 95440~Bayreuth, Germany}
\email{\href{mailto:thomas.peternell@uni-bayreuth.de}{thomas.peternell@uni-bayreuth.de}}
\urladdr{\href{http://btm8x5.mat.uni-bayreuth.de/mathe1}{http://btm8x5.mat.uni-bayreuth.de/mathe1}}
\date{\today}
\begin{document}

\begin{abstract}
  Based on a recent extension theorem for reflexive differential forms, that is,
  regular differential forms defined on the smooth locus of a possibly singular
  variety, we study the geometry and cohomology of sheaves of reflexive
  differentials.
  
  First, we generalise the extension theorem to holomorphic forms on locally
  algebraic complex spaces.  We investigate the (non-)existence of reflexive
  pluri-differentials on singular rationally connected varieties, using a
  semistability analysis with respect to movable curve classes. The necessary
  foundational material concerning this stability notion is developed in an
  appendix to the paper.  Moreover, we prove that Kodaira--Akizuki--Nakano
  vanishing for sheaves of reflexive differentials holds in certain extreme
  cases, and that it fails in general. Finally, topological and Hodge-theoretic
  properties of reflexive differentials are explored.
\end{abstract}

\maketitle

\tableofcontents

\section{Introduction}\label{sect:intro}


Holomorphic differential forms are an indispensable tool to study the global
geometry of non-singular projective varieties and compact Kähler manifolds. In
the presence of singularities, with the exception of forms of degree one and
forms of top degree, the influence of differential forms on the geometry of a
variety is much less explored. At the same time, in higher-dimensional algebraic
geometry one is naturally led to consider singular varieties in a given
birational equivalence class, even if one is primarily interested in projective
manifolds. Hence, the need for a comprehensive geometric and cohomological
theory of differential forms on singular spaces arises.

In this paper we study the geometry and cohomology of sheaves of \emph{reflexive
  differential forms}, that is, regular forms defined on the smooth locus of a
possibly singular variety. Motivated by and using the techniques of the Minimal
Model Program we focus on the natural classes of singularities in Mori Theory:
log canonical, kawamata log terminal, canonical, and terminal. For definitions
and a thorough discussion of these classes of singularities we refer the reader
to \cite[Chap.~2.3]{KM98} and \cite{KollarSingsOfPairs}.

The basic tool for our investigation is an extension theorem for reflexive
differentials, which we state here in a simplified version.

\begin{thm}[Extension Theorem]\label{thm:ext}
  Let $X$ be a quasi-projective variety and $D$ an effective $\mathbb Q$-divisor such that
  the pair $(X,D) $ is Kawamata log terminal (klt). Let $\pi: \wtilde X \to X$ a
  be log resolution of the pair $(X,D)$. Then, the sheaves
  $\pi_*(\Omega^p_{\wtilde X})$ are reflexive for $1 \leq p \leq n$.
\end{thm}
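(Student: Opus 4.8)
The plan is to recast the reflexivity assertion as an extension statement for differential forms, and then to prove extension by induction on $n=\dim X$, with the inductive step built on general hyperplane sections and the one genuinely hard case reduced, via the minimal model program, to a Hodge-theoretic vanishing theorem on snc pairs. \emph{Reformulation.} Since $X$ is normal and $\pi$ is proper and birational, the largest open set $X_0\subseteq X$ over which $\pi$ is an isomorphism is big, i.e.\ $\codim_X(X\setminus X_0)\ge 2$, and $X_0\subseteq X_{\reg}$. Over $X_0$ the torsion-free sheaf $\pi_*\Omega^p_{\wtilde X}$ coincides with the locally free sheaf $\Omega^p_{X_0}$, hence with the reflexive sheaf $\Omega^{[p]}_X:=(\Omega^p_X)^{**}$; as two torsion-free sheaves that agree over a big open subset of a normal variety have the same reflexive hull, there is a canonical injection $\pi_*\Omega^p_{\wtilde X}\hookrightarrow\Omega^{[p]}_X$, and $\pi_*\Omega^p_{\wtilde X}$ is reflexive precisely when this injection is surjective. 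Equivalently --- and this is the form I would work with --- every reflexive $p$-form $\sigma$ on an open set $U\subseteq X$, that is, every regular $p$-form on $U_{\reg}$, must pull back to a regular $p$-form on $\pi^{-1}(U)$. Since $\pi^*\sigma$ is automatically regular on $\pi^{-1}(U_{\reg})$, the problem is local on $X$ and reduces to showing that the rational form $\pi^*\sigma$ acquires no pole along any $\pi$-exceptional prime divisor of $\wtilde X$. Finally, $\pi_*\Omega^p_{\wtilde X}$ does not depend on the chosen log resolution, so I am free to replace $\pi$ by any convenient one.

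\emph{Case $\dim\pi(E)>0$: hyperplane induction.} Fix such a $\sigma$, near a point of $X$, and a $\pi$-exceptional prime divisor $E\subseteq\wtilde X$ with $\dim\pi(E)>0$. Choose a general very ample divisor $H\subseteq X$ meeting $\pi(E)$. Then $\wtilde H:=\pi^{-1}(H)\subseteq\wtilde X$ is a smooth divisor meeting $E$ and forming an snc divisor together with $\Exc(\pi)$; by Bertini-type theorems for klt pairs $(H,D|_H)$ is klt of dimension $n-1$, $\pi$ restricts to a log resolution of it, and $\sigma|_{H_{\reg}}$ is a reflexive $p$-form on $H$. The conormal and residue sequences relating $\Omega^\bullet_{\wtilde X}$, $\Omega^\bullet_{\wtilde X}(\log\wtilde H)$ and $\Omega^\bullet_{\wtilde H}$, for instance
\[
0\longrightarrow\Omega^p_{\wtilde X}(\log\wtilde H)\otimes\sO_{\wtilde X}(-\wtilde H)\longrightarrow\Omega^p_{\wtilde X}\longrightarrow\Omega^p_{\wtilde H}\longrightarrow 0,
\]
together with the extension statement on $H$ in degrees $p$ and $p-1$, known by induction on $n$, then imply that $\pi^*\sigma$ has no pole along $E$. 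Since on surfaces every exceptional divisor lies over a point, this case is vacuous there and the induction is well-founded; it reduces the theorem to the situation in which every exceptional divisor along which $\pi^*\sigma$ could have a pole is contracted by $\pi$ to a point.

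\emph{Case $\dim\pi(E)=0$ --- the main obstacle.} This residual situation, where cutting with hyperplanes gives no leverage, is the genuine difficulty. Here I would pass, by running the minimal model program on $\wtilde X$ relative to $X$, to a model on which the pair is as close to snc as possible (a dlt model), and use the theory of differential forms under finite covers (so-called adapted or $\mathcal{C}$-differentials) together with base change to reduce to a pair $(Z,\Delta)$ with $Z$ smooth and $\Delta$ a reduced snc divisor, equipped with a projective morphism $f\colon Z\to Y$ contracting part of $\Delta$ exactly to the point in question. For such a pair the desired extension follows from a Hodge-theoretic vanishing theorem, in the spirit of the Grauert--Riemenschneider and Steenbrink vanishing theorems, asserting the vanishing of the relevant higher direct images $R^jf_*\Omega^q_Z(\log\Delta)$ after a suitable twist by the exceptional divisor; this in turn rests on the $E_1$-degeneration of the logarithmic Hodge--de Rham spectral sequence and on the purity of the mixed Hodge structures carried by the cohomology of the fibres of $f$. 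Proving this vanishing, and carrying out the rather delicate reduction to a pair to which it applies --- controlling throughout the combinatorics of the exceptional divisors and the behaviour of $\mathcal{C}$-differentials under the finite cover --- is where essentially all the work lies; the reformulation and the hyperplane reduction are by comparison formal.
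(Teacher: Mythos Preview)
The paper you are comparing against does not actually prove Theorem~\ref{thm:ext}: it is quoted in the introduction as background, with the proof deferred entirely to \cite[Thm.~1.5]{GKKP11} (and the survey \cite{Keb11}). So there is no ``paper's own proof'' here to compare to --- only a citation.

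That said, your sketch is a fair high-level outline of the strategy of \cite{GKKP11}: the reformulation as an extension problem for forms along exceptional divisors, the inductive cutting-down by general hyperplanes to handle exceptional divisors with positive-dimensional image, and the reduction of the remaining case (exceptional divisors over points) via the relative MMP to a dlt model, index-one covers and $\mathcal{C}$-differentials, and finally a Steenbrink-type vanishing theorem on snc pairs. You correctly identify that the last step is where almost all the work lies. One point worth flagging is that your description compresses a great deal: in \cite{GKKP11} the reduction is not simply ``run the MMP and apply vanishing'', but involves a careful analysis of how forms with logarithmic poles behave under each step of the MMP (extremal contractions and flips), a non-trivial comparison of $\mathcal{C}$-differentials on the index-one cover with ordinary differentials, and a bespoke vanishing theorem whose statement and proof occupy a substantial portion of that paper. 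As a roadmap your proposal is sound; as a proof it is, of course, only a pointer to \cite{GKKP11}.
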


In fact, a much more general statement holds, for which we refer the reader to
\cite[Thm.~1.5]{GKKP11} or to the survey \cite{Keb11}.

Introducing the sheaves $\Omega^{[p]}_X = (\bigwedge^p \Omega^1_X)^{**}$ of
reflexive differentials, Theorem~\ref{thm:ext} can be rephrased by saying that
$$
\pi_*(\Omega^p_{\tilde X}) = \Omega^{[p]}_X\;\;\;\; \forall p \in \{1, \dots,
n\},
$$
or in yet another way by saying that any regular differential form defined on
the smooth part of a pair $(X, D)$ with at worst klt singularities defines a
regular differential form on any log resolution of $(X, D)$.

We will use this result (as well as a generalisation to holomorphic differential
forms on locally algebraic varieties established here) to study a number of
problems about existence of differential forms, vanishing theorems, and
Hodge-theoretic properties of reflexive differentials.

\subsection*{Outline of the paper}

We conclude this introduction by a summary of the contents of the paper.

\subsubsection*{Extension theorems in the holomorphic setting}

The Extension Theorem \ref{thm:ext} is established in the algebraic
category. Although some of the methods employed in \cite{GKKP11} to prove
Theorem~\ref{thm:ext}, especially the use of the Minimal Model Program and
certain vanishing theorems, are not at one's disposal in the analytic context,
one should nevertheless expect the extension property to hold also in the
holomorphic setting. In Section~\ref{sect:holext} we prove this for klt complex
spaces $X$ that locally algebraic:

\begin{thm}[Extension Theorem for holomorphic forms]\label{thm:anaext} 
  Let $X$ be a normal complex space and $D$ an effective $\mathbb Q$-divisor such that the
  pair $(X,D) $ is analytically klt. Suppose $(X,D)$ is locally algebraic.  Let
  $\pi: \wtilde X \to X$ a be log resolution of the pair $(X,D)$. Then, the
  sheaves $\pi_*(\Omega^p_{\wtilde X})$ are reflexive for $1 \leq p \leq n$.
\end{thm}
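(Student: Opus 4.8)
The plan is to reduce the analytic statement, Theorem~\ref{thm:anaext}, to the algebraic Extension Theorem~\ref{thm:ext} by a local argument, exploiting precisely the hypothesis that $(X,D)$ is locally algebraic. Reflexivity of a coherent sheaf on a normal complex space is a local property, so it suffices to prove the claim in a neighbourhood of an arbitrary point $x\in X$. By definition of local algebraicity, after shrinking $X$ we may assume that there is an open set $U\subseteq X$ containing $x$ together with a biholomorphism $U\cong V^{\mathrm{an}}$, where $V$ is a (Zariski-open subset of a) quasi-projective variety, and where the pair $(X,D)$ corresponds to an algebraic pair $(V,D_V)$ under this identification. Since the analytic klt condition is defined via discrepancies computed on a log resolution, and since log resolutions, discrepancies and the relevant divisor data all match under the analytification functor (GAGA-type comparison for the sheaves $\Omega^p$ and for $K_X+D$), the pair $(V,D_V)$ is algebraically klt.

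Next I would compare the pushforward sheaves on the two sides. Choose a log resolution $\pi_V\colon \wtilde V\to V$ of the algebraic pair; its analytification $\pi_V^{\mathrm{an}}\colon \wtilde V^{\mathrm{an}}\to V^{\mathrm{an}}$ is a log resolution of the analytic pair $(U,D|_U)$. The given analytic log resolution $\pi\colon \wtilde X\to X$ need not agree with $\pi_V^{\mathrm{an}}$ over $U$, but any two log resolutions are dominated by a common one, and the sheaf $\pi_*\Omega^p_{\wtilde X}$ is independent of the chosen log resolution — this is the standard fact that for a birational morphism $\mu\colon X'\to X''$ between smooth varieties one has $\mu_*\Omega^p_{X'}=\Omega^p_{X''}$, which holds verbatim in the analytic category by the same local computation. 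Hence over $U$ we may replace $\pi$ by $\pi_V^{\mathrm{an}}$. Now the coherent analytic sheaf $(\pi_V^{\mathrm{an}})_*\Omega^p_{\wtilde V^{\mathrm{an}}}$ is the analytification of the coherent algebraic sheaf $(\pi_V)_*\Omega^p_{\wtilde V}$: this follows from the fact that $\pi_V$ is proper, so that analytification commutes with pushforward (Serre's GAGA for proper morphisms, in the form that applies to $\mathsf{R}^0$ with coherent coefficients over an arbitrary quasi-projective base), together with the evident isomorphism $\Omega^p_{\wtilde V^{\mathrm{an}}}\cong(\Omega^p_{\wtilde V})^{\mathrm{an}}$.

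Finally, Theorem~\ref{thm:ext} applied to the algebraic klt pair $(V,D_V)$ tells us that $(\pi_V)_*\Omega^p_{\wtilde V}$ is reflexive for $1\le p\le n$; indeed it equals $\Omega^{[p]}_V$. Reflexivity is preserved under analytification over a quasi-projective base — a coherent sheaf $\sF$ is reflexive if and only if it is torsion-free and satisfies $\sF\cong(\sF^*)^*$, and both the formation of duals $\sHom(-,\sO)$ and torsion-freeness are GAGA-compatible for coherent sheaves on quasi-projective varieties — so $\big((\pi_V)_*\Omega^p_{\wtilde V}\big)^{\mathrm{an}}\cong(\pi_V^{\mathrm{an}})_*\Omega^p_{\wtilde V^{\mathrm{an}}}$ is reflexive on $U$, hence $\pi_*\Omega^p_{\wtilde X}$ is reflexive near $x$. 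As $x$ was arbitrary, the sheaf is reflexive on all of $X$. The main obstacle, and the step requiring the most care, is the compatibility bookkeeping in the middle paragraph: one must verify that analytification genuinely commutes with the pushforward $(\pi_V)_*$ even though the base $V$ is only quasi-projective rather than projective, which is why the hypothesis is local algebraicity (so that one works on a relatively compact piece and the properness of $\pi_V$ does the job) and why the comparison of log resolutions — which a priori differ in the analytic and algebraic worlds — must be handled via a dominating common resolution rather than assumed outright.
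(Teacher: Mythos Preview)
Your proposal is correct and follows essentially the same approach as the paper: localise using that reflexivity is a local property and that any two log resolutions are dominated by a common one (the paper's Lemma~\ref{lem:localise}), pass to an algebraic model via local algebraicity, compare klt conditions (Lemma~\ref{lem:kltcomparison}), invoke the algebraic Extension Theorem, and transfer reflexivity back to the analytic side via relative GAGA and compatibility of $\sHom$ with analytification (the paper's Lemma~\ref{lem:reflexivecomparison}). The only cosmetic difference is that the paper packages the GAGA step and the preservation of reflexivity into a separate lemma, whereas you sketch these inline; one small point you might make explicit is that the algebraic model $(V,D_V)$ is a priori only klt at the points lying under $U$, so one shrinks $V$ by removing a Zariski-closed set before applying Theorem~\ref{thm:ext}.
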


In fact, analogous to \cite[Thm.~1.5]{GKKP11} we prove a much more general
result for locally algebraic log canonical pairs, see
Theorem~\ref{thm:holomorphicextension}. Using Artin's Algebraization Theorem, an
extension theorem for normal complex spaces with isolated singularities is
derived as an immediate consequence of Theorem~\ref{thm:anaext}, see
Corollary~\ref{cor:isolated}.

\subsubsection*{Reflexive differentials on rationally chain connected spaces}

Rationally connected and, more generally, rationally chain connected projective
varieties play a key role in the classification and structure theory of
algebraic varieties. It is a fundamental fact of higher-dimensional algebraic
geometry that a rationally connected projective manifold $X$ does not carry any
non-trivial differential forms. This statement also holds for reflexive
differential forms on rationally connected varieties with at worst klt
singularities, see~\cite[Thm.~5.1]{GKKP11}. In the smooth case, considering
general tensor powers instead of exterior powers of $\Omega_X^1$, it can even be
shown that a rationally connected projective manifold does not carry any
\emph{pluri-form}:
\begin{equation}\label{eq:Mumford}
  H^0\bigl(X, \,(\Omega^1_X)^{\otimes m}\bigr)
  = 0 \;\;\;\; \forall m \in \mathbb{N}^{\geq 1}.
\end{equation}
Exploring the singular setup, in Section~\ref{sect:rcc} we derive an analogous
vanishing result for reflexive pluri-forms on \emph{factorial} klt spaces:

\begin{thm}[cf.~Theorem~\ref{thm:rc}]\label{thm:factorialMumford}
  Let $X$ be a rationally chain-connected factorial klt space. Then
  $$
  H^0\bigl(X, \, ((\Omega^1_X)^{\otimes m})^{**} \bigr) = 0.
  $$
\end{thm}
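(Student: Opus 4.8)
The plan is to reduce the claim, via reflexivity, to a vanishing statement on the smooth locus of $X$, and then to annihilate a putative reflexive pluri-form by restricting it to a very free rational curve contained in that smooth locus. First, a rationally chain-connected klt space is in fact rationally connected (Hacon--McKernan), so we may assume $X$ rationally connected. Since $X$ is normal, the complement of the smooth locus $U := X_{\reg}$ has codimension $\geq 2$ and the sheaf $\sF := ((\Omega^1_X)^{\otimes m})^{**}$ is reflexive; hence restriction gives an isomorphism $H^0(X,\sF) \cong H^0(U,(\Omega^1_U)^{\otimes m})$, and it suffices to show that a section of $\sF$ vanishes on a dense subset of $U$. (The exterior analogue -- no reflexive $p$-forms on klt rationally connected spaces, \cite[Thm.~5.1]{GKKP11} -- follows directly from the Extension Theorem~\ref{thm:ext} by pulling back to a log resolution; for \emph{tensor} powers this route is unavailable, which forces a more hands-on argument.)

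The key geometric input is that a rationally connected klt space carries a covering family of rational curves whose general member $C$ is contained in the smooth locus $U$ and is \emph{very free} there, so that $\Omega^1_U|_C \cong \bigoplus_{i=1}^{n}\sO_{\bP^1}(-a_i)$ with every $a_i \geq 1$; such curves are produced in the familiar way, by gluing free rational curves through general points and smoothing, the whole construction taking place inside $U$. Granting this, let $\sigma \in H^0(X,\sF)$ and let $C$ be a general member of the family; then $\sigma|_C$ is a section of $(\Omega^1_U|_C)^{\otimes m} \cong \bigoplus \sO_{\bP^1}(-(a_{i_1}+\dots+a_{i_m}))$, a direct sum of line bundles of degree $\leq -m < 0$, hence $\sigma|_C = 0$. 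As $C$ sweeps out a dense open subset of $U$, the section $\sigma$ vanishes there, and therefore $\sigma = 0$ on all of $X$.

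Equivalently -- and this is presumably the form in which the paper records the argument -- the covering family above determines a movable curve class $\alpha$, and the computation on $C$ says that every reflexive subsheaf of $\Omega^{[1]}_X$ has negative $\alpha$-degree, i.e.\ $\mu_{\alpha,\max}(\Omega^{[1]}_X) < 0$. By the semistability formalism developed in the appendix -- in particular the additivity of the maximal slope under (reflexive) tensor products in characteristic zero -- one gets $\mu_{\alpha,\max}(\sF) = m\cdot\mu_{\alpha,\max}(\Omega^{[1]}_X) < 0$, so $\sF$ carries no subsheaf of non-negative $\alpha$-slope; since a nonzero section of $\sF$ would provide an inclusion $\sO_X \into \sF$ of $\alpha$-slope $0$, there is none. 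Factoriality enters precisely here: it ensures that $c_1$ of an arbitrary reflexive subsheaf of $\Omega^{[1]}_X$ is Cartier, so the movable-slope machinery applies without modification.

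The main obstacles I anticipate are, on the geometric side, pinning down -- or citing in the exact generality needed -- the existence of a covering family of very free rational curves \emph{inside the smooth locus} of a klt rationally connected space, and, on the homological side, setting up semistability with respect to movable curve classes for reflexive sheaves, namely the existence and multiplicativity of Harder--Narasimhan filtrations, which is exactly the foundational material the paper postpones to its appendix.
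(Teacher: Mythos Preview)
Your first approach contains a genuine gap: you assert that a rationally connected klt space carries very free rational curves lying entirely in the smooth locus $U$, but this is not true in general, and your gluing-and-smoothing sketch cannot be carried out ``inside $U$'' because $U$ is not proper. More to the point, your first argument never uses factoriality; if it worked, it would prove the vanishing for every $\bQ$-factorial klt rationally connected space, directly contradicting the counterexample in Section~\ref{ssec:C1}. So the step ``a general very free curve avoids $X_{\sing}$'' is exactly where the proof breaks, and it is not salvageable without a further idea.

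The paper's proof does not try to keep the curve in $U$. Instead one passes to a resolution $\pi\colon \wtilde X \to X$, takes a very free curve $\wtilde C \subset \wtilde X$ (which will typically meet the exceptional divisor), and works with its image class $\alpha = [\pi(\wtilde C)]$. Your second approach is on the right track --- a nonzero section gives $\mu_\alpha^{\max}\bigl((\Omega^1_X)^{[m]}\bigr) \geq 0$, hence $\mu_\alpha^{\max}(\Omega^{[1]}_X) \geq 0$ by the appendix, hence a subsheaf $\sS \subseteq \Omega^{[1]}_X$ of nonnegative slope --- but the contradiction is obtained on $\wtilde X$, not on $U$: since $X$ is factorial, $\det \sS$ is an honest line bundle and embeds in $\Omega^{[r]}_X$; then the pull-back map for reflexive differentials \cite[Thm.~4.3]{GKKP11} produces a generically injective morphism $\pi^*(\det \sS)\to \Omega^r_{\wtilde X}$, whose restriction to $\wtilde C$ is a nonzero map from a nef line bundle into the anti-ample bundle $\Omega^r_{\wtilde X}|_{\wtilde C}$. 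Your diagnosis of where factoriality enters is also slightly off: the movable-slope machinery of the appendix already works for $\bQ$-factorial varieties; what factoriality buys is that $\det \sS$ is invertible, so that $\pi^*(\det\sS)|_{\wtilde C}$ is a line bundle rather than a line bundle plus torsion --- and it is precisely this torsion that destroys the argument in the non-factorial example.
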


Note here that factorial klt spaces automatically have canonical
singularities. Moreover, they are rationally connected by a result of Hacon and
McKernan, cf.~Remark~\ref{rem:rcc}. The above generalisation of
\eqref{eq:Mumford} to the singular case is not at all obvious and, as a matter
of fact, surprisingly fails in the non-factorial canonical case. A corresponding
two-dimensional example is provided in Section~\ref{ssec:C1}. The proof of
Theorem~\ref{thm:factorialMumford} relies on a semistability analysis of sheaves
of reflexive differentials with respect to movable curves on klt spaces. The
necessary foundational material concerning this semistability notion is
developed in detail in Appendix~\ref{sec:snsp}; see below for an introduction.

\subsubsection*{Vanishing theorems}

Section~\ref{sect:KAN} of the paper discusses vanishing theorems for reflexive
differentials on klt spaces. In the classical smooth setting, the
Kodaira-Akizuki-Nakano theorem states that for any ample line bundle $\sL$ on a
projective manifold $X$ we have
\begin{align}
  H^q \bigl(X,\,\Omega^{p}_X \otimes \sL \bigr) & = 0 \quad \text{for $p + q
    > n$, and} \label{eq:1}\\
  H^q \bigl(X,\,\Omega^{p}_X \otimes \sL^{-1} \bigr) & = 0 \quad \text{for
    $p + q < n$.}\label {eq:2}
\end{align}
We will prove in Section~\ref{ssec:pKANvan} that these vanishings still hold on
klt spaces in certain extremal ranges, again with $\Omega^p_X$ replaced by
$\Omega_X^{[p]}$. At the same time we show that the generalisation of
\eqref{eq:1} and \eqref{eq:2} to sheaves of reflexive differentials on klt
spaces fails in general. In fact, in Section~\ref{ssec:nopKANvan} we exhibit a
projective $4$-fold $X$ with a single isolated terminal Gorenstein singularity
carrying an ample line bundle $\sL$ such that
$$
H^2\bigl(X,\,\Omega^{[3]}_X \otimes \sL\bigr) \simeq H^2\bigl(X,\,\Omega^{[1]}_X
\otimes \sL^{-1}\bigr) \simeq \mathbb C,
$$
We do not know at the moment whether a similar counterexample also exists in
dimension $3$.

\subsubsection*{Hodge-Theory and the Poincar\'e-Lemma}
It follows from standard Hodge theory for compact Kähler manifolds that every
regular differential form on a projective manifold is closed. While the
existence of a suitable Hodge theory for reflexive differentials is still an
open question, the Extension Theorem~\cite[Thm.~1.5]{GKKP11} implies the
following generalisation of this particular Hodge-theoretic result to log
canonical pairs, which we show in Section~\ref{sect:Poincare}.

\begin{prop}[Closedness of global logarithmic forms]
  Let $X$ be a projective variety, and let $D$ be a $\mathbb{Q}$-divisor such
  that the pair $(X, D )$ is log-canonical. Then, any reflexive logarithmic
  $p$-form $\sigma \in H^0\bigl(X,\, \Omega^{[p]}_X(\log \lfloor D \rfloor)
  \bigr)$ is closed.
\end{prop}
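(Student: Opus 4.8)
The plan is to reduce the closedness statement on the singular pair $(X,D)$ to the classical Hodge-theoretic fact on a log resolution, using the Extension Theorem of \cite{GKKP11} to transport the given reflexive logarithmic form upstairs without loss of information. First I would choose a log resolution $\pi\colon \wtilde X \to X$ of the pair $(X, \lfloor D \rfloor)$, so that the exceptional locus together with the strict transform of $\lfloor D \rfloor$ forms a simple normal crossings divisor $\wtilde D$ on $\wtilde X$. The general form of the Extension Theorem (the logarithmic version alluded to in the excerpt as \cite[Thm.~1.5]{GKKP11}) then identifies the pushforward $\pi_* \Omega^p_{\wtilde X}(\log \wtilde D)$ with the reflexive sheaf $\Omega^{[p]}_X(\log \lfloor D \rfloor)$. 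Consequently a section $\sigma \in H^0\bigl(X, \Omega^{[p]}_X(\log \lfloor D \rfloor)\bigr)$ corresponds to a genuine logarithmic $p$-form $\wtilde\sigma \in H^0\bigl(\wtilde X, \Omega^p_{\wtilde X}(\log \wtilde D)\bigr)$ with $\pi_* \wtilde \sigma = \sigma$.

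Next I would invoke Hodge theory on the smooth projective variety $\wtilde X$: by Deligne's theory of the mixed Hodge structure on the cohomology of the open manifold $U = \wtilde X \setminus \wtilde D$, the logarithmic de Rham complex $\Omega^\bullet_{\wtilde X}(\log \wtilde D)$ computes $H^*(U,\mathbb C)$, and the Hodge filtration degenerates at $E_1$; in particular every global logarithmic $p$-form on $\wtilde X$ is $d$-closed. (Alternatively, since $\wtilde X$ is a smooth projective variety, one may replace $\wtilde D$ by a reduced snc divisor containing it and quote the standard statement that $H^0(\wtilde X, \Omega^p_{\wtilde X}(\log \wtilde D))$ consists of closed forms.) Thus $d\wtilde\sigma = 0$ as a section of $\Omega^{p+1}_{\wtilde X}(\log \wtilde D)$.

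Finally I would push this down. The exterior derivative is compatible with pullback of Kähler differentials, so on the smooth locus $X_{\reg} \setminus \supp D$, where $\pi$ is an isomorphism, we have $d\sigma = \pi_*(d\wtilde\sigma) = 0$. Since $d\sigma$ is a section of $\Omega^{[p+1]}_X(\log \lfloor D \rfloor)$, which is a reflexive — hence torsion-free — sheaf, and it vanishes on the dense open set $X_{\reg}\setminus\supp D$, it vanishes identically. Hence $\sigma$ is closed.

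The main obstacle — or rather the point requiring care — is that closedness of $\sigma$ is an assertion about $d\sigma$ as a section of $\Omega^{[p+1]}_X(\log\lfloor D\rfloor)$, and one must be sure that the natural differential on reflexive logarithmic forms is the one obtained by restriction from $X_{\reg}$ and agrees, under the Extension Theorem isomorphism, with pushforward of the differential on $\wtilde X$. This is where the precise statement of the logarithmic Extension Theorem matters: it is not merely an abstract isomorphism of sheaves but one compatible with restriction to the smooth locus, so the two notions of $d$ coincide there, and reflexivity (torsion-freeness) of $\Omega^{[p+1]}_X(\log\lfloor D\rfloor)$ then propagates the vanishing from the dense open set to all of $X$. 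A secondary point is to verify that the relevant general logarithmic extension result from \cite{GKKP11} applies to log canonical — not merely klt — pairs in the degree range $p+1 \le n$, which it does.
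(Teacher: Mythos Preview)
Your proposal is correct and follows the same route as the paper: lift $\sigma$ to a log resolution via the Extension Theorem \cite[Thm.~1.5]{GKKP11}, invoke Deligne's result \cite[(3.2.14)]{DeligneHodgeII} that global logarithmic forms on a projective snc pair are closed, and conclude on $X_{\reg}\setminus\supp\lfloor D\rfloor$. The only cosmetic differences are that the paper takes $\wtilde D$ to be the largest reduced divisor contained in $\supp\pi^{-1}(\nklt(X,D))$ (so that the reflexivity of the pushforward is a direct citation of \cite[Thm.~1.5]{GKKP11} rather than a consequence), and since closedness in Definition~\ref{def:closeness} only demands $d(\sigma|_{X_{\reg}\setminus\lfloor D\rfloor})=0$, your final torsion-freeness argument for $d\sigma$ as a global reflexive section is not needed.
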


Next, it is shown that reflexive one-forms on klt spaces satisfy a Poincaré
lemma. As a corollary, we obtain that every globally defined closed reflexive one-form
is represented in a canonical manner by a Kähler differential.

\subsubsection*{Semistability notions on singular spaces}

In Appendix~\ref{sec:snsp} we develop a semistability theory for torsion-free
sheaves with respect to a movable class on a klt space, generalizing the results
presented in \cite{CP11}.  Specifically, we establish the existence of a maximal
destabilizing subsheaf and prove that semistability is preserved under tensor
operations.

\subsubsection*{General Remark}
Throughout the paper we will use fundamental facts about reflexive differentials
on singular varieties, as well as basic definitions concerning logarithmic pairs
and resolution of singularities, for which we refer the reader to
\cite[Sect.~2]{GKKP11} and \cite[Sect.~2]{GKK08}.

\subsection*{Acknowledgements }

Daniel Greb wants to thank János Kollár for interesting discussions concerning
the contents of this paper. The authors also want to thank Sándor Kovács and
Luca Migliorini for numerous discussions and hints. They also thank the referee
for reading the paper extremely carefully, and for several helpful suggestions
which led to a partial generalization of Proposition~\ref{prop:partVan2}, and to
a substantial simplification of the argument in Appendix~\ref{ssec:swrtm}.

\section{Extension of holomorphic differential forms}
\label{sect:holext}

In this section we discuss the extension properties of holomorphic differential
forms on spaces with at worst log canonical singularities. In order to
distinguish the euclidean topology of a given complex space from its Zariski
topology, an open subset in the earlier topology will be called ``open'', while
open subsets in the latter topology will be called ``Zariski-open''. A
corresponding convention will be used for closed subsets.

\subsection{Analytically klt and log canonical singularities}

We first extend the notion of klt pair and log canonical pair from the algebraic
to the analytic category.

\begin{defn}[Analytically klt and log canonical pairs]\label{def:klt-lc}
  Let $X$ be a normal complex space and $D$ an effective $\bQ$-divisor on
  $X$. Let $p \in X$. We call the pair $(X, D)$ \emph{analytically log canonical
    at p}, respectively \emph{analytically klt at p} if there exists an open
  neighbourhood $U = U(p)$ such that
  \begin{enumerate}
  \item \label{il:klt1} $K_X + D$ is $\bQ$-Cartier on $U$,
  \item \label{il:klt2} if $\pi: \wtilde U \to U$ is any log resolution of
    $(U,D|_U)$, and $m \in \bN$ is an integer such that $m(K_X +D)|_U = m(K_U +
    D|_U)$ is Cartier on $U$, then writing
    \begin{multline*}
      \qquad\qquad\qquad m(K_{\wtilde U} + \pi^{-1}_*(D|_U))
      \sim_{\bQ-\text{lin.}} \\ \pi^* \bigl( m(K_U +D|_U) \bigr) \;+\;
      \sum_{\mathclap{\txt{\scriptsize $E$ a $\pi$-except.\\ \scriptsize
            divisor}}} m \cdot a(E, U, D|_U) \cdot E
    \end{multline*}
    with rational numbers $ a(E, U, D|_U) \in \bQ$, we have $a(E, U, D_U) \geq
    -1$, respectively $a(E, U, D|_U) > -1$ for all $\pi$-exceptional divisors
    $E$,
  \item \label{il:klt3}for the ``klt'' case, $\lfloor D|_U \rfloor = 0$.
  \end{enumerate}
  A pair $(X, D)$ is called \emph{analytically log canonical}, respectively
  \emph{analytically klt} if it is analytically log canonical, respectively
  analytically klt at every point $p \in X$.
\end{defn}

\begin{rem}
  Resolutions of singularities, as well as log-resolutions, are known to exist
  in the analytic category, cf.~\cite[Thm.~2.0.1]{MR2500573} and
  \cite[Thm.~7.1]{MR0499299}.
\end{rem}

\begin{rem}
  The notion of an ''analytically terminal'' pair is defined in complete analogy
  to Definition~\ref{def:klt-lc} above. A pair $(X, D)$ is analytically klt at a
  point $p \in X$, respectively analytically log canonical at a point $p \in X$
  if and only if the triple $(X, \{p\}, D)$ is klt, respectively log canonical
  in the sense of \cite[p.~104]{Kawa88}.
\end{rem}

\begin{rem}
  As in the algebraic case, the discrepancy condition can be checked on a single
  resolution. More precisely, a pair $(X, D)$ is analytically klt, respectively
  analytically log canonical at a given point $p \in X$ if and only if there
  exists an open neighbourhood $U$ of $p$ such that
  (\ref{def:klt-lc}.\ref{il:klt1}) and (\ref{def:klt-lc}.\ref{il:klt3}) hold,
  and the discrepancy inequalities required in (\ref{def:klt-lc}.\ref{il:klt2})
  are fulfilled for a single log resolution $\pi: \wtilde U \to U$.
\end{rem}

\begin{defn}[Non-klt locus]
  Let $(X, D)$ be an analytically log canonical pair. Then we define the
  \emph{non-klt locus of }$(X,D)$ to be
  $$
  \nklt(X,D) := \{p \in X \mid (X,D) \text{ is not klt at }p \}.
  $$
\end{defn}

\begin{rem}
  Let $(X,D)$ be an analytically log canonical pair. Then, $\nklt(X,D)$ is a
  Zariski-closed subset of $X$.
\end{rem}

Next, we compare the newly introduced classes of analytic singularities to the
classical notions in the algebraic category. In the following, given an
algebraic object $O$ the corresponding analytic object will be denoted by
$O^{an}$.

\begin{lem}\label{lem:kltcomparison}
  Let $X$ be a normal algebraic variety and $D$ an effective $\bQ$-divisor on $X$. Then
  \begin{enumerate-p}
  \item\label{il:kltcomparison1} $(X,D)$ is log canonical if and only if
    $(X^{an},D^{an})$ is analytically log canonical.
  \item\label{il:kltcomparison2} $(X,D)$ is klt if and only if $(X^{an},D^{an})$
    is analytically klt.
  \item\label{il:kltcomparison3} If $(X,D)$ is log canonical, we have
    $$
    \nklt(X^{an},D^{an}) = (\nklt(X,D))^{an}.
    $$
  \end{enumerate-p}
\end{lem}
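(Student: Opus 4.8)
The plan is to reduce everything to a single, well-chosen log resolution and then observe that the defining discrepancy inequalities are literally the same data in the algebraic and analytic settings, because resolutions, exceptional divisors, and discrepancies all behave well under analytification. The key point is that the notions of ``analytically klt'' and ``analytically log canonical'' in Definition~\ref{def:klt-lc} were set up precisely so that they can be tested on one resolution (by the third Remark following that definition), so there is no need to worry about the passage from ``some resolution'' to ``all resolutions''.

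First I would fix a point $p \in X$ and a Zariski-open affine neighbourhood $U$ of $p$ on which $K_X + D$ is $\bQ$-Cartier; such a $U$ exists when $(X,D)$ is algebraically log canonical or klt, and conversely the $\bQ$-Cartier condition on $U^{an}$ is equivalent to the $\bQ$-Cartier condition on $U$ (a Weil divisor class is $\bQ$-Cartier iff its analytification is, since the obstruction lives in a coherent cohomology group that is computed the same way, or more elementarily because a local generator of $\sO(m(K_X+D))$ stays a generator after analytification). Next I would choose an algebraic log resolution $\pi : \wtilde U \to U$ of $(U, D|_U)$; its analytification $\pi^{an} : \wtilde U^{an} \to U^{an}$ is then an analytic log resolution of $(U^{an}, D|_U^{an})$, because analytification preserves smoothness, properness of $\pi$, the property of being an isomorphism over the locus where $(U,D|_U)$ is snc, and the fact that the exceptional set together with the strict transform of $D$ is an snc divisor. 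Crucially, the $\pi$-exceptional divisors of $\pi^{an}$ are exactly the analytifications of the $\pi$-exceptional divisors of $\pi$, and they are in bijection with each other.

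The heart of the matter is then the comparison of discrepancies. The ramification formula $m(K_{\wtilde U} + \pi^{-1}_* D|_U) \sim_{\bQ} \pi^*(m(K_U + D|_U)) + \sum_E m\, a(E,U,D|_U)\, E$ is an identity of $\bQ$-divisors (equivalently, an isomorphism of the associated reflexive sheaves over an open set where things are Cartier), and applying the exact functor ``analytification'' to this identity yields the corresponding analytic ramification formula with the \emph{same} coefficients $a(E,U,D|_U)$, now playing the role of $a(E^{an}, U^{an}, D|_U^{an})$. Here one uses that $K_{\wtilde U^{an}} = (K_{\wtilde U})^{an}$, that $\pi^*$ commutes with analytification, that $\pi^{-1}_*$ of $D$ analytifies to $(\pi^{an})^{-1}_*$ of $D^{an}$, and that linear equivalence of $\bQ$-divisors is preserved and (for uniqueness of the coefficients) reflected by analytification. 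Since the coefficients are literally unchanged, the inequality $a(E,\cdot) \geq -1$ (resp.\ $> -1$) holds on one side iff it holds on the other, and the round-down condition $\lfloor D|_U \rfloor = 0$ is obviously insensitive to analytification; this gives \iref{il:kltcomparison1} and \iref{il:kltcomparison2}. Finally, \iref{il:kltcomparison3} follows because both sides are Zariski-closed and, by \iref{il:kltcomparison2} applied locally (to open neighbourhoods of points of $X$), a point $p$ lies in $\nklt(X,D)$ iff some/the chosen resolution over a neighbourhood of $p$ exhibits a divisor with discrepancy $\le -1$ or $\lfloor D \rfloor \neq 0$ there, iff the same holds analytically at $p$, iff $p \in \nklt(X^{an},D^{an})$; taking closures (and using that analytification of a Zariski-closed set is the corresponding Zariski-closed set, with the same underlying point set) gives the claimed equality of sets.

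The main obstacle I expect is not conceptual but bookkeeping: one must be careful that ``analytification'' really is exact on the relevant categories of coherent sheaves and that it commutes with the operations $K_{(-)}$, $\pi^*$, strict transform, and round-down, and that linear equivalence of $\bQ$-Weil divisors (not just $\bQ$-Cartier ones) is both preserved and reflected — the latter so that the discrepancy coefficients are genuinely the same numbers rather than merely satisfying related inequalities. Once the GAGA-type dictionary is in place, each of the three assertions is a short formal deduction, and no geometry beyond the existence of a single log resolution (in both categories) is needed.
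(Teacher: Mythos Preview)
Your proposal is correct and follows the same underlying approach as the paper: the discrepancies of $(X,D)$ computed on a single algebraic log resolution are literally the same numbers as those of $(X^{an},D^{an})$ computed on the analytified resolution, so the defining inequalities transfer verbatim. The paper's own proof compresses all of this into a one-line citation to \cite[p.~104]{Kawa88}, where Kawamata observes that the algebraic conditions can be checked on analytic open subsets; your write-up simply unpacks that reference and supplies the GAGA-type bookkeeping (compatibility of $K_{(-)}$, $\pi^*$, strict transform, and $\bQ$-Cartier-ness with analytification) that makes the statement self-contained.
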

\begin{proof}
  This follows from the fact that whether or not $(X,D)$ is log canonical or klt
  can be checked on open subsets of $(X^{an},D^{an})$, see \cite[p.~104]{Kawa88}.
\end{proof}

\begin{defn}\label{defn:locallyalgebraic}
  Let $X$ be a normal complex space, and $D$ an effective $\bQ$-divisor on
  $X$. We call the pair $(X,D)$ \emph{locally algebraic} if there exists a cover
  $\{U_\lambda\}_{\lambda \in \Lambda}$ of $X$ by open subsets $U_\lambda$ such
  that for every $\lambda \in \Lambda$ there exists
  \begin{itemize}
  \item a normal quasi-projective variety $Y_\lambda$,
  \item an effective $\bQ$-divisor $D_\lambda$ on $Y_\lambda$,
  \item  an open subset $V_\lambda \subset (Y_\lambda)^{an}$, and 
  \item a biholomorphic map of pairs $\phi_\lambda: (U_\lambda, D|_{U_\lambda}) \to (V_\lambda, (D_\lambda)^{an}|_{V_\lambda})$. 
  \end{itemize}
\end{defn}

\begin{example}
  Every complex manifold and every complex space with finite quotient
  singularities is locally algebraic.
\end{example}

\begin{example}\label{eq:Artin}
  It follows from Artin's Approximation Theorem \cite[Thm.~3.8]{ArtinApprox}
  that any complex space with isolated singularities is locally algebraic.
\end{example}

\begin{example}
  Let $X$ be a Moishezon space. Then, $X$ is the complex space associated with
  an algebraic space $Z$ in the sense of Artin, see~\cite[Theorem 7.3]{ArtinII}.
  Consequently, every point $p \in X=Z^{an}$ admits an étale neighbourhood
  $\varphi: U \to Z$, where $U$ is an affine algebraic variety. It follows that
  $X$ is locally algebraic in the sense of
  Definition~\ref{defn:locallyalgebraic} above.
\end{example}

\subsection{The extension theorem for holomorphic forms}

If $Z$ is any non-singular complex space and $D$ is a normal crossing divisor on
$Z$, then the sheaf of holomorphic (logarithmic) $p$-forms will be denoted by
$\Omega_Z^p(\log D)$.

The following is the main result of this section.

\begin{thm}[Extension theorem for holomorphic forms]\label{thm:holomorphicextension}
  Let $X$ be a locally algebraic normal complex space and $D$ an effective
  $\bQ$-divisor on $X$ such that $(X, D)$ is analytically log canonical. Let
  $\pi: \wtilde X \to X$ be a log resolution of $(X, D)$ with $\pi$-exceptional
  set $E$ and
  $$
  \wtilde D := \text{largest reduced divisor contained in } \supp
  \pi^{-1}(\nklt(X,D)).
  $$
  Then, the sheaves $\pi_*\Omega_{\wtilde X}^p(\log \wtilde D) $ are reflexive
  for all $p \leq n$.
\end{thm}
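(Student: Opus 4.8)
The plan is to reduce the holomorphic statement to the algebraic Extension Theorem of \cite[Thm.~1.5]{GKKP11} by exploiting the local algebraicity hypothesis, and then to show that the reflexivity conclusion — being a statement about sheaves that can be checked on the stalks in the euclidean topology — is insensitive to the passage between the algebraic and analytic categories. The main structural observation is that reflexivity of a coherent sheaf $\sF$ on a normal complex space $X$ is a local property: $\sF$ is reflexive if and only if it is torsion-free, satisfies $S_2$, and agrees with its double dual, all of which can be verified on a neighbourhood of each point. Hence it suffices to prove that $\pi_*\Omega^p_{\wtilde X}(\log\wtilde D)$ is reflexive in a neighbourhood of an arbitrary point $p\in X$.

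First I would fix $p\in X$ and choose, using Definition~\ref{defn:locallyalgebraic}, an open neighbourhood $U=U_\lambda$ of $p$ together with a biholomorphic map of pairs $\phi\colon (U,D|_U)\xrightarrow{\ \sim\ }(V,(D_\lambda)^{an}|_V)$, where $V$ is an open subset of the analytification of a normal quasi-projective variety $Y_\lambda$ carrying an effective $\bQ$-divisor $D_\lambda$. The analytic log canonical condition is local, so by Lemma~\ref{lem:kltcomparison}\iref{il:kltcomparison1} we may shrink $Y_\lambda$ so that the pair $(Y_\lambda,D_\lambda)$ is log canonical as an algebraic pair; likewise $\nklt$ is compatible with analytification by Lemma~\ref{lem:kltcomparison}\iref{il:kltcomparison3}. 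Next I would transport the given analytic log resolution $\pi\colon\wtilde X\to X$: its restriction over $U$ is, via $\phi$, a log resolution of $(V,(D_\lambda)^{an}|_V)$ in the analytic category. The point where care is needed is that the \emph{algebraic} Extension Theorem applies to an \emph{algebraic} log resolution of $(Y_\lambda,D_\lambda)$, not to an arbitrary analytic one over an open subset. I would deal with this by first proving the statement for one convenient analytic log resolution — namely the analytification of an algebraic log resolution $\rho\colon\wtilde Y_\lambda\to Y_\lambda$ — and then invoking the standard independence-of-resolution principle: any two log resolutions are dominated by a common one, and $\pi_*\Omega^p(\log\,\cdot\,)$ is a birational invariant of log resolutions in the relevant sense (this is exactly the mechanism already used in \cite{GKKP11}, and it works verbatim in the analytic category because it only uses that the exceptional loci are normal-crossing and that forms with log poles pull back and push forward correctly under blow-ups with smooth centres contained in the boundary).

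For the analytification of the algebraic resolution, the argument is: by \cite[Thm.~1.5]{GKKP11} the algebraic sheaf $\rho_*\Omega^p_{\wtilde Y_\lambda}(\log\wtilde D_{Y_\lambda})$ is reflexive on $Y_\lambda$, equivalently it equals the reflexive hull $\Omega^{[p]}_{Y_\lambda}(\log\lfloor D_\lambda\rfloor)$. Analytification is flat, commutes with finite intersections of kernels, and commutes with proper pushforward by the GAGA-type comparison theorem of \cite{ArtinII} / Grothendieck's comparison for proper morphisms; hence $\bigl(\rho_*\Omega^p(\log\cdot)\bigr)^{an}=(\rho^{an})_*\Omega^p_{(\wtilde Y_\lambda)^{an}}(\log(\wtilde D_{Y_\lambda})^{an})$, and the reflexivity of a coherent sheaf is preserved under the faithfully flat, codimension-preserving functor $(\,\cdot\,)^{an}$ — concretely, torsion-freeness, the $S_2$ property, and the isomorphism $\sF\to\sF^{**}$ are each stable under analytification because the relevant local cohomology depth statements are detected after the faithfully flat base change $\sO_{X,x}\to\sO_{X^{an},x}$. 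Restricting to $V$ and transporting along $\phi$ then gives reflexivity of $\pi_*\Omega^p_{\wtilde X}(\log\wtilde D)$ near $p$, and since $p$ was arbitrary, on all of $X$.

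The hard part will be the bookkeeping around the boundary divisor and the change of resolution: one must check that the reduced divisor $\wtilde D=\text{(largest reduced divisor in }\supp\pi^{-1}(\nklt(X,D)))$ corresponds, under $\phi$ and under passing between $\pi$ and $\rho^{an}$, to precisely the analogous boundary for $\rho$, so that the algebraic input theorem is being applied with the matching log structure; this is where Lemma~\ref{lem:kltcomparison}\iref{il:kltcomparison3} and the compatibility of $\nklt$ with analytification and with dominating resolutions are used crucially. A secondary technical point is verifying the claim that analytification preserves reflexivity — I expect this to follow cleanly from faithful flatness of $\sO_{X,x}\to\sO_{X^{an},x}$ together with the characterisation of reflexive sheaves on normal spaces via the $S_2$ condition and the agreement with the double dual in codimension one, but it should be stated as a separate lemma. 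Once these compatibilities are in place, the proof is essentially a diagram chase assembling \cite[Thm.~1.5]{GKKP11}, Lemma~\ref{lem:kltcomparison}, the GAGA comparison for proper maps, and the locality of reflexivity.
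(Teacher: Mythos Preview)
Your proposal is correct and follows essentially the same route as the paper: localise via the locally-algebraic hypothesis, reduce to an algebraic log resolution using an independence-of-resolution argument (the paper packages this as Lemma~\ref{lem:localise}), apply \cite[Thm.~1.5]{GKKP11}, and transfer reflexivity through analytification via relative GAGA (the paper packages this as Lemma~\ref{lem:reflexivecomparison}, proving it by showing that $\sHom$ commutes with analytification rather than via the $S_2$ characterisation you suggest, but either works). You have correctly anticipated both preparatory lemmas and the bookkeeping with $\nklt$ and the boundary divisor.
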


\begin{rem}
  As in the algebraic case, the name ''Extension Theorem'' is justified by the
  following observation: the sheaves $\pi_*\Omega_{\wtilde X}^p(\log \wtilde D)
  $ are reflexive if and only if for any open set $U \subseteq X$ and any number
  $p$, the restriction morphism
  \begin{equation*}
    H^0\bigl(U,\, \pi_*\Omega^p_{\wtilde X}(\log \wtilde D)\bigr) \to
    H^0\bigl(U \setminus \pi(E),\, \Omega^p_{X}(\log \lfloor D\rfloor)\bigr)
  \end{equation*}
  is surjective. In other words, Theorem~\ref{thm:holomorphicextension} states
  that any holomorphic logarithmic $p$-form defined on the non-singular part of
  $(X, D)$ can be extended to any resolution of singularities, possibly with
  logarithmic poles along the divisor $\wtilde D$.
\end{rem}

\begin{rem}
  Note that Theorem~\ref{thm:anaext} as stated in the introduction
  (Section~\ref{sect:intro}) of this paper is an immediate consequence of
  Theorem~\ref{thm:holomorphicextension} ($\nklt(X, \emptyset)$ being empty in
  this case).
\end{rem}

\subsection{Proof of Theorem~\ref{thm:holomorphicextension}}

We begin with two preparatory lemmas.

\begin{lem}\label{lem:localise}
  Let $X$ be a normal complex space and $D$ an effective $\bQ$-divisor on $X$
  such that $(X, D)$ is analytically log canonical. Let $\pi: \wtilde X \to X$
  be a log resolution of $(X, D)$ and
  $$
  \wtilde D := \text{largest reduced divisor contained in } \supp
  \pi^{-1}(\nklt(X,D)).
  $$
  Then, the sheaf $\pi_*\Omega_{\wtilde X}^p(\log \wtilde D) $ is reflexive if
  and only if there exists an open cover $\{U_\lambda\}_{\lambda \in \Lambda}$
  of $X$ as well as log resolutions $\pi_\lambda: \wtilde{U}_\lambda \to
  U_\lambda$ such that for every $\lambda \in \Lambda$ the sheaf
  $(\pi_\lambda)_*\Omega_{\wtilde{U}_\lambda}^p(\log \wtilde{D}_\lambda) $ is
  reflexive, where
  $$
  \wtilde{D}_\lambda := \text{largest reduced divisor contained in } \supp
  (\pi_\lambda)^{-1}(\nklt(U_\lambda ,   D|_{U_\lambda})). 
  $$
\end{lem}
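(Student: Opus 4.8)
The plan is to reduce the global statement to a purely local one about reflexivity of a coherent sheaf, which is a local condition. Recall that reflexivity of a coherent sheaf $\sF$ on a normal complex space is equivalent to the conjunction of two local conditions: $\sF$ is torsion-free, and $\sF$ satisfies Serre's property $S_2$ (equivalently, for every open set the restriction map to the complement of any analytic subset of codimension $\geq 2$ is an isomorphism). Both conditions can be checked on the members of an open cover. So the first step is to observe that $\pi_*\Omega_{\wtilde X}^p(\log \wtilde D)$ is a torsion-free coherent sheaf (being a subsheaf of the direct image of a locally free sheaf, and $\pi$ being proper so the pushforward is coherent by Grauert), and that reflexivity for such a sheaf is a local property in the euclidean topology.

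Next I would check compatibility of the formation of $\pi_*\Omega^p_{\wtilde X}(\log\wtilde D)$ with restriction to open subsets. If $\{U_\lambda\}$ is an open cover of $X$ and $\pi_\lambda\colon \wtilde U_\lambda \to U_\lambda$ is the base change of $\pi$, then $\wtilde U_\lambda = \pi^{-1}(U_\lambda)$ is again a log resolution of $(U_\lambda, D|_{U_\lambda})$, the $\pi_\lambda$-exceptional set is $E \cap \wtilde U_\lambda$, and because $\nklt$ is defined by a local discrepancy condition one has $\nklt(U_\lambda, D|_{U_\lambda}) = \nklt(X,D) \cap U_\lambda$; hence $\wtilde D_\lambda = \wtilde D|_{\wtilde U_\lambda}$ and consequently $\bigl(\pi_*\Omega^p_{\wtilde X}(\log\wtilde D)\bigr)\big|_{U_\lambda} = (\pi_\lambda)_*\Omega^p_{\wtilde U_\lambda}(\log\wtilde D_\lambda)$ by the trivial (no higher cohomology needed) flat-base-change for direct images along open immersions. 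Combining this with the first step gives one direction immediately: if each $(\pi_\lambda)_*\Omega^p_{\wtilde U_\lambda}(\log\wtilde D_\lambda)$ is reflexive then so is $\pi_*\Omega^p_{\wtilde X}(\log\wtilde D)$, because reflexivity is local.

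For the converse, given reflexivity of the global pushforward, I would again invoke locality to conclude that each restriction $\bigl(\pi_*\Omega^p_{\wtilde X}(\log\wtilde D)\bigr)\big|_{U_\lambda}$ is reflexive, and then use the identification from the previous paragraph to see this restriction equals $(\pi_\lambda)_*\Omega^p_{\wtilde U_\lambda}(\log\wtilde D_\lambda)$. The only subtlety is that the lemma allows the $\pi_\lambda$ to be \emph{arbitrary} log resolutions of $(U_\lambda, D|_{U_\lambda})$, not necessarily the restriction of $\pi$. To handle this I would invoke the standard independence-of-resolution principle: any two log resolutions are dominated by a common one, and for a birational proper morphism $f$ between smooth spaces with normal crossing boundary the identity $f_*\Omega^p(\log \cdot) = \Omega^p(\log\cdot)$ holds (this is the same fact that makes the statement of the Extension Theorem independent of the chosen resolution); thus the reflexivity of $(\pi_\lambda)_*\Omega^p_{\wtilde U_\lambda}(\log\wtilde D_\lambda)$ does not depend on which log resolution $\pi_\lambda$ of $(U_\lambda, D|_{U_\lambda})$ is used.

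The main obstacle is the bookkeeping around the logarithmic boundary under base change and under change of resolution: one must verify carefully that $\wtilde D$ restricts to $\wtilde D_\lambda$ (which hinges on the local nature of $\nklt$, already recorded in the earlier remarks of this section) and that passing to a dominating resolution does not alter the relevant pushforward sheaf — the latter requires knowing that the extra exceptional divisors introduced are accounted for correctly by the ``largest reduced divisor contained in $\supp \pi^{-1}(\nklt)$'' prescription, so that one really is comparing $f_*\Omega^p_{\cdot}(\log\cdot)$ for $f$ a birational morphism of snc pairs. Once these compatibilities are in place, the lemma follows formally from the fact that reflexivity of a torsion-free coherent sheaf is a local property.
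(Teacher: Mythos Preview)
Your proposal is correct and follows essentially the same strategy as the paper: the paper's proof is a terse two-sentence sketch noting that log resolutions exist in the analytic category, that any two can be dominated by a third, and then citing the analogous algebraic statement \cite[Lem.~2.13]{GKK08} for the remaining details. You have spelled out exactly those details---locality of reflexivity, compatibility of the pushforward with restriction, locality of $\nklt$, and independence of the chosen resolution via a dominating one---so your argument is the expanded version of what the paper leaves implicit.
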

\begin{proof}
  Log resolutions and principalization morphisms exist in the analytic category,
  cf.~\cite[Thms.~2.0.1 and 2.0.3]{MR2500573} and
  \cite[Sect.~7]{MR0499299}. This immediately implies that any two log
  resolutions of an analytic pair $(Z, \Delta)$ can be dominated by a third. The
  claim of Lemma~\ref{lem:localise} thus follows by arguments analogous to the
  algebraic case proven for example in \cite[Lem.~2.13]{GKK08}.
\end{proof}

\begin{lem}\label{lem:reflexivecomparison}
  Let $\pi: Y \to X$ be a projective morphism of normal algebraic varieties, and
  let $\sF$ be a coherent algebraic sheaf on $Y$. If $\pi_*\sF$ is a reflexive
  sheaf on $X$, then $(\pi^{an})_*(\sF^{an})$ is an (analytically) reflexive
  sheaf on $X^{an}$.
\end{lem}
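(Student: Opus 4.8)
The plan is to reduce the statement to standard facts about flat base change and the comparison of algebraic and analytic coherent sheaves via Serre's GAGA-type results. First I would recall that reflexivity is a local property on $X$, so we may freely assume $X = \Spec A$ is affine (and, after the analytification, replace $X^{an}$ by an arbitrary relatively compact open subset when checking the conclusion). The key structural input is that $\pi_*\sF$ being reflexive on a normal variety is equivalent to the conjunction of two conditions: $\pi_*\sF$ is torsion-free, and it satisfies Serre's condition $S_2$, i.e.\ the natural restriction map $(\pi_*\sF)(U) \to (\pi_*\sF)(U \setminus Z)$ is an isomorphism for every open $U$ and every closed $Z \subseteq U$ of codimension $\geq 2$. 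The strategy is to show each of these two conditions transfers to $(\pi^{an})_*(\sF^{an})$ on $X^{an}$.

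The main step is to compare the pushforward with its analytification. Since $\pi\colon Y \to X$ is projective, the coherent base change theorem of Grothendieck, together with the analytic base change / relative GAGA theorem (comparing $R^q\pi_*$ of a coherent sheaf with $R^q(\pi^{an})_*$ of its analytification, valid because $\pi$ is proper), gives a canonical isomorphism $(\pi_*\sF)^{an} \cong (\pi^{an})_*(\sF^{an})$ of coherent analytic sheaves on $X^{an}$. Hence it suffices to prove that the analytification of a reflexive coherent algebraic sheaf $\sG := \pi_*\sF$ on the normal variety $X$ is analytically reflexive on $X^{an}$. For this I would use that a coherent algebraic sheaf is reflexive if and only if it admits a presentation as the kernel of a morphism of free sheaves (equivalently, fits in an exact sequence $0 \to \sG \to \sO_X^{\oplus a} \to \sO_X^{\oplus b}$); analytification is exact and commutes with forming duals $\sHom(-,\sO)$ for coherent sheaves, so it carries such a presentation to an analogous one on $X^{an}$, witnessing $\sG^{an}$ as a second-syzygy sheaf, hence analytically reflexive. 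Alternatively, one can argue directly: $\sG$ reflexive means $\sG \cong \sHom_{\sO_X}(\sHom_{\sO_X}(\sG,\sO_X),\sO_X)$, and since analytification commutes with $\sHom$ of coherent sheaves (a local computation on the finite presentations), applying $(-)^{an}$ to this isomorphism yields $\sG^{an} \cong \sHom_{\sO_{X^{an}}}(\sHom_{\sO_{X^{an}}}(\sG^{an},\sO_{X^{an}}),\sO_{X^{an}})$, which is exactly analytic reflexivity.

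The step I expect to be the main obstacle is justifying that analytification commutes with $\sHom$ for coherent sheaves, and more generally keeping the comparison functorial; this rests on the flatness of $\sO_{X^{an},x}$ over $\sO_{X,x}$ for every point $x$ (faithful flatness of the analytic local ring over the algebraic one), which makes $(-)^{an}$ exact and compatible with the formation of finitely presented functors such as $\sHom$ and $\otimes$. Once that flatness is in hand, the argument is essentially bookkeeping. A small additional point worth addressing is that the statement refers to \emph{analytic} reflexivity, so one should fix the convention that a coherent analytic sheaf $\sH$ is reflexive if $\sH \to \sH^{\vee\vee}$ is an isomorphism, and note that on the normal complex space $X^{an}$ this is equivalent to $\sH$ being torsion-free and $S_2$ by the same homological criterion as in the algebraic case; with this convention the two formulations of the proof coincide. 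I would close by remarking that normality of $X$ (hence of $X^{an}$, since normality is preserved under analytification) is used only to have the clean $S_2$-criterion available and is in any case part of the hypotheses.
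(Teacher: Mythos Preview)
Your proposal is correct and matches the paper's proof essentially line for line: the paper also invokes relative GAGA to identify $(\pi^{an})_*(\sF^{an}) \cong (\pi_*\sF)^{an}$, then shows analytification preserves reflexivity by proving $\sHom_{\sO_X}(\sG,\sO_X)^{an} \cong \sHom_{\sO_{X^{an}}}(\sG^{an},\sO_{X^{an}})$ via the flatness of $\varphi:X^{an}\to X$ and finite presentation of $\sG$ (citing \cite[Chap.~0, Sect.~6.7.6]{EGA1}). Your ``Alternatively'' paragraph is exactly the paper's argument; the $S_2$/second-syzygy detour you sketch first is not needed.
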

\begin{proof}
  Relative GAGA \cite[Ch.~XII, Thm.~4.2]{SGA1} applied to the projective
  morphism $\pi$ yields
  \begin{equation}\label{eq:456}
    (\pi^{an})_*(\sF^{an}) \cong (\pi_*\sF)^{an}.
  \end{equation}
  Moreover, for any coherent algebraic sheaf $\sG$ on $X$ we have
  \begin{equation}\label{eq:homandhomagain}
    \sHom_{\sO_X}(\sG, \sO_X)^{an} \cong \sHom_{\sO_{X^{an}}}(\sG^{an}, \sO_{X^{an}}).
  \end{equation}
  To prove~\eqref{eq:homandhomagain}, consider the natural morphism of ringed
  spaces $\varphi: X^{an} \to X$. Recall from \cite[Exposé~XII, Sect.~1.1]{SGA1}
  that $\varphi$ is flat, and that
  $$
  \sG^{an} = \varphi^* \sG \text{\quad and \quad} \sO_{X^{an}} = \varphi^* \sO_X.
  $$
  Recalling further that $\sG$ is finitely presented, \cite[Chap.~0,
  Sect.~5.3.2]{EGA1}, Equation~\eqref{eq:homandhomagain} is an immediate
  consequence of \cite[Chap.~0, Sect.~6.7.6]{EGA1}.

  Applying the functor $\sHom$ twice, Equation~\eqref{eq:homandhomagain} shows
  that the analytification of any reflexive coherent algebraic sheaf on $X$ is
  an (analytically) reflexive coherent analytic sheaf on $X^{an}$. Since
  $\pi_*\sF$ is reflexive by assumption, the claim hence follows from the
  isomorphism \eqref{eq:456}.
\end{proof}

\begin{proof}[Proof of Theorem~\ref{thm:holomorphicextension}]
  Since $X$ is locally algebraic, using Lemma~\ref{lem:localise} we may assume
  that there exists a quasi-projective variety $Z$ and an effective
  $\bQ$-divisor $\Delta$, a log resolution $q: \wtilde {Z} \to Z$ of $(Z,
  \Delta)$, as well as open holomorphic embeddings $\varphi: (X,D)
  \hookrightarrow (Z^{an},\Delta^{an})$ and $\psi: \wtilde X \hookrightarrow
  \wtilde{Z}^{an}$ such that the following diagram commutes
  $$
  \begin{xymatrix}{ %
      \wtilde X \ar[r]^{\psi} \ar[d]_\pi&   \wtilde{Z}^{an} \ar[d]^{q^{an}}\\
      X \ar[r]^{\varphi} &  Z  ^{an}.
    }
  \end{xymatrix}
  $$
  
  Lemma~\ref{lem:kltcomparison}.(\ref{il:kltcomparison1}) implies that
  $(Z,\Delta)$ is log canonical at every point of $\varphi(X)$. Hence, removing
  a closed algebraic subset from $Z$ if necessary, we may assume that $(Z,
  \Delta)$ is log canonical. Furthermore,
  Lemma~\ref{lem:kltcomparison}.(\ref{il:kltcomparison2}) implies
  that
  $$
  \nklt(X,D) = \varphi^{-1}(\nklt(Z, \Delta)^{an}).
  $$
  As a consequence, setting
  $$
  \wtilde{\Delta} := \text{largest reduced divisor contained in } \supp
  \pi^{-1}(\nklt(Z,\Delta)),
  $$
  we have $\wtilde D = \psi^{-1}(\wtilde{\Delta}^{an})$. In order to establish
  the claim it therefore suffices to show that $(q^{an})_*
  \Omega_{\wtilde{Z}^{an}}^p (\log \wtilde{\Delta}^{an}) $ is reflexive.

  Elementary considerations show that for all $p \leq n$ 
  \begin{equation}\label{eq:123}
    \Omega_{\wtilde{Z}^{an}}^p (\log \wtilde{\Delta}^{an}) = (\Omega_{\wtilde{Z}}^p(\log \wtilde{\Delta}))^{an}.
  \end{equation}
  Since $q_*\Omega_{\wtilde{Z}}^p(\log \wtilde{\Delta})$ is reflexive by the
  Extension Theorem, \cite[Thm.~1.5]{GKKP11},
  Lemma~\ref{lem:reflexivecomparison} together with equation~\eqref{eq:123}
  yields reflexivity of $(q^{an})_* \Omega_{\wtilde{Z}^{an}}^p (\log
  \wtilde{\Delta}^{an})$. This finishes the proof.
\end{proof}

The observation made in Example~\ref{eq:Artin} immediately yields the following
consequence of Theorem~\ref{thm:holomorphicextension}.

\begin{cor}[Holomorphic extension over isolated singularities]\label{cor:isolated}
  Let $X$ be a normal complex space, $D$ a $\bQ$-divisor on $X$ such that $(X,
  D)$ has only isolated log canonical singularities. Let $\pi: \wtilde X \to X$
  be a log resolution of $(X,D)$ and
  $$
  \wtilde D := \text{largest reduced divisor contained in } \supp
  \pi^{-1}(\nklt(X,D)).
  $$
  Then, the sheaves $\pi_*\Omega_{\wtilde X}^p(\log \wtilde D)$ are reflexive
  for all $p \leq n$.
\end{cor}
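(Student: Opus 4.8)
The plan is to deduce Corollary~\ref{cor:isolated} directly from Theorem~\ref{thm:holomorphicextension} by verifying the single hypothesis of that theorem, namely that a normal complex space with only isolated log canonical singularities is locally algebraic in the sense of Definition~\ref{defn:locallyalgebraic}. Once local algebraicity is established, the definition of $\wtilde D$ in the corollary matches verbatim the one in Theorem~\ref{thm:holomorphicextension}, so the conclusion that the sheaves $\pi_*\Omega_{\wtilde X}^p(\log \wtilde D)$ are reflexive for all $p\leq n$ is immediate.

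For the local algebraicity statement, I would invoke Example~\ref{eq:Artin}, which records precisely that any complex space with isolated singularities is locally algebraic as a consequence of Artin's Approximation Theorem \cite[Thm.~3.8]{ArtinApprox}. Concretely, at a smooth point local algebraicity is trivial (a polydisc is an open subset of $\bA^n$), and at an isolated singular point $p$ Artin's theorem produces an affine variety whose analytification is biholomorphic, near the image of $p$, to a neighbourhood of $p$ in $X$; after possibly shrinking one may also take this variety to be normal, and the divisor $D$, being supported away from $p$ in a neighbourhood or else $\bQ$-Cartier there, transports to an algebraic $\bQ$-divisor on the chart. This gives the required cover $\{U_\lambda\}$ together with the biholomorphic maps of pairs $\phi_\lambda$.

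With local algebraicity in hand, the pair $(X,D)$ satisfies all the hypotheses of Theorem~\ref{thm:holomorphicextension}: it is a locally algebraic normal complex space, $D$ is an effective $\bQ$-divisor, and $(X,D)$ is analytically log canonical since it has only isolated log canonical singularities. Applying that theorem to an arbitrary log resolution $\pi:\wtilde X\to X$ and the divisor $\wtilde D$ defined as the largest reduced divisor contained in $\supp\pi^{-1}(\nklt(X,D))$ yields exactly the assertion of the corollary. I do not anticipate any serious obstacle here; the only point requiring a little care is checking that the algebraic chart can be chosen so that the analytic $\bQ$-divisor $D$ (and not merely the underlying space) is the analytification of an algebraic one, which follows because $D$ is $\bQ$-Cartier near $p$ and its defining equations can be arranged to be algebraic via Artin approximation, exactly as in the remark following Example~\ref{eq:Artin}.

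\begin{proof}[Proof of Corollary~\ref{cor:isolated}]
  By Example~\ref{eq:Artin}, the pair $(X,D)$ is locally algebraic in the sense
  of Definition~\ref{defn:locallyalgebraic}: away from the finitely many
  singular points this is clear, and near an isolated singularity it follows
  from Artin's Approximation Theorem \cite[Thm.~3.8]{ArtinApprox}. Moreover,
  since $(X,D)$ has only isolated log canonical singularities, it is
  analytically log canonical in the sense of Definition~\ref{def:klt-lc}. Thus
  all hypotheses of Theorem~\ref{thm:holomorphicextension} are satisfied, and
  applying that theorem to the given log resolution $\pi:\wtilde X\to X$ and the
  divisor
  $$
  \wtilde D = \text{largest reduced divisor contained in } \supp
  \pi^{-1}(\nklt(X,D))
  $$
  yields that the sheaves $\pi_*\Omega_{\wtilde X}^p(\log \wtilde D)$ are
  reflexive for all $p\leq n$.
\end{proof}
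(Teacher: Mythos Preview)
Your proposal is correct and follows exactly the route the paper takes: the paper states the corollary as an immediate consequence of Theorem~\ref{thm:holomorphicextension} together with Example~\ref{eq:Artin}, which is precisely what you do. Your additional remark about transporting the $\bQ$-divisor $D$ through the Artin chart is a reasonable point of care that the paper leaves implicit.
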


\begin{rem}
  In particular, Corollary~\ref{cor:isolated} implies that any holomorphic
  differential forms defined on the smooth locus of a three-dimensional complex
  space $X$ such that $(X, \emptyset)$ is analytically terminal extends to any
  resolution of singularities.
\end{rem}

\section{Reflexive differential forms on rationally connected varieties}
\label{sect:rcc}

\subsection{Non-existence of reflexive pluri-forms}

Rationally connected and, more generally, rationally chain-connected projective
varieties play a key role in the classification and structure theory of
algebraic varieties. It is a fundamental fact of higher-dimensional algebraic
geometry that a rationally connected projective manifold $X$ does not carry any
pluri-forms, that is
\begin{equation}\label{eq:nopluriforms}
  H^0 \bigl(X,\,(\Omega^1_X)^{\otimes m} \bigr) = 0 \quad \forall m \in \bN^+.
\end{equation}
We refer to \cite[IV.3.8]{K96} for a thorough review of this result.  At least
conjecturally, \eqref{eq:nopluriforms} is also sufficient to conclude that a
projective manifold $X$ is projective. This has been proven in dimension three
by Kollár--Miyaoka--Mori, see \cite[Thm.~3.2]{KMM92b}.

For reflexive $p$-forms, the vanishing result has been generalized to spaces
which support klt pairs.

\begin{thm}[\protect{Reflexive differentials on rcc spaces, cf.~\cite[Thm.~5.1]{GKKP11}}]\label{thm:nopformsonrcc}
  Let $X$ be a normal, rationally chain-connected projective variety. If there
  exists a $\bQ$-divisor $D$ on $X$ such that $(X,D)$ is klt, then $H^0 \bigl(
  X, \, \Omega^{[p]}_X \bigr) = 0$ for all $1 \leq p \leq \dim X$. \qed
\end{thm}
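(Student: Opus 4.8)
The plan is to transport the statement to a resolution of singularities by means of the Extension Theorem, and then to invoke the classical vanishing of global $p$-forms on rationally connected manifolds.

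\emph{Step 1: Reduction to a resolution.} First I would fix a log resolution $\pi\colon \widetilde X \to X$ of the klt pair $(X,D)$. By the Extension Theorem, Theorem~\ref{thm:ext} (in the precise form of \cite[Thm.~1.5]{GKKP11}), one has $\pi_*\Omega^p_{\widetilde X}=\Omega^{[p]}_X$ for every $1\le p\le n:=\dim X$, and taking global sections yields the tautological identification
\[
H^0\bigl(X,\,\Omega^{[p]}_X\bigr)\;=\;H^0\bigl(\widetilde X,\,\Omega^p_{\widetilde X}\bigr).
\]
It therefore suffices to prove that the smooth projective variety $\widetilde X$ carries no nonzero global regular $p$-form for $1\le p\le n$.

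\emph{Step 2: The resolution is rationally connected.} Next I would invoke the theorem of Hacon and McKernan on Shokurov's rational connectedness conjecture: a rationally chain-connected variety admitting a klt pair structure is in fact rationally connected, so $X$ is rationally connected. This property descends to $\widetilde X$: two general points of $\widetilde X$ lie over two general points of $X$, which avoid the centre of $\pi$ and are joined by a very free rational curve $C\subset X$ that may be chosen not to be contained in $\pi(\Exc(\pi))$; the normalisation of the strict transform of $C$ is then a rational curve on $\widetilde X$ through the two chosen points. Hence $\widetilde X$ is a smooth projective rationally connected variety.

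\emph{Step 3: Vanishing on the resolution.} Finally I would use the classical fact that a smooth projective rationally connected variety $Y$ satisfies $H^0\bigl(Y,(\Omega^1_Y)^{\otimes m}\bigr)=0$ for all $m\ge 1$, see \cite[IV.3.8]{K96}; since in characteristic zero $\Omega^p_Y$ is a direct summand of $(\Omega^1_Y)^{\otimes p}$, this gives $H^0\bigl(Y,\Omega^p_Y\bigr)=0$ for all $p\ge 1$. Applying this with $Y=\widetilde X$ and combining with the identification of Step~1 completes the argument.

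The hard part is Step~2. Chains of rational curves through the singularities of $X$ need not lift to $\widetilde X$ in any naive way, and it is exactly the control provided by the Hacon–McKernan theorem — which in turn relies on the klt hypothesis — that makes the passage from $X$ to a rationally connected resolution possible. By contrast, the reduction via the Extension Theorem and the vanishing of pluri-forms on rationally connected manifolds are, respectively, formal and entirely standard.
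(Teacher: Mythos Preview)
Your proof is correct and follows exactly the strategy one would expect: Extension Theorem to pass to a resolution, Hacon--McKernan to upgrade rational chain-connectedness to rational connectedness, and the classical vanishing \cite[IV.3.8]{K96} on the smooth model. Note, however, that the paper does not actually prove this statement; it is quoted from \cite[Thm.~5.1]{GKKP11} and closed with a \qed. Your argument is essentially the one given there.

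One small point of presentation in Step~2: the phrase ``very free rational curve $C\subset X$'' is slightly off, since $X$ is singular and very freeness is a notion tied to the tangent bundle of a smooth ambient space. What you actually use is only that two general points of $X$ lie on an irreducible rational curve not contained in the image of the exceptional locus, so that its strict transform is a rational curve through the corresponding points of $\wtilde X$. It is cleaner simply to invoke the birational invariance of rational connectedness \cite[IV.3.3]{K96}: once Hacon--McKernan gives that $X$ is rationally connected, any smooth projective model, in particular $\wtilde X$, is rationally connected as well. This is also how the paper phrases it in the proof of Theorem~\ref{thm:rc} (``By Remark~\ref{rem:rcc} the smooth variety $\wtilde X$ is rationally connected'').
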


\begin{rem}[Rational chain-connectedness vs.\ rational connectedness]\label{rem:rcc}
  Let $X$ be a normal, rationally chain-connected projective variety. If there
  exists a $\bQ$-divisor $D$ on $X$ such that $(X,D)$ is klt, then $X$ is in
  fact rationally connected, cf.~\cite[Cor.~1.5]{HMcK07}.
\end{rem}

In this section we investigate whether a similar vanishing result also holds for
reflexive pluri-differentials. Somewhat surprisingly, the answer is mixed:
vanishing holds if $X$ is factorial, but fails in general.

\begin{thm}[Reflexive pluri-forms on factorial rcc spaces]\label{thm:rc}
  Let $X$ be a normal rationally chain-connected projective variety. If $X$ is
  factorial and has canonical singularities, then
  $$
  H^0 \left(X,\,(\Omega^1_X)^{[m]} \right) = 0 \quad \text{for all } m \in
  \bN^+, \text{ where } (\Omega^1_X)^{[m]} := \bigl( (\Omega^1_X)^{\otimes
    m}\bigr)^{**}.
  $$
\end{thm}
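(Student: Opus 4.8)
The plan is to reduce the statement to a semistability assertion about the reflexive cotangent sheaf $\Omega^{[1]}_X$ with respect to a suitable movable curve class, and then to propagate semistability through the tensor construction using the machinery of Appendix~\ref{sec:snsp}. First I would recall from Remark~\ref{rem:rcc} that a factorial canonical rationally chain-connected projective variety $X$ is in fact rationally connected, so there is a very free rational curve through a general point; equivalently, by Bend-and-Break and the cone theorem one obtains a movable curve class $\gamma$ (for instance represented by the strict transform of a general complete-intersection-type family of free rational curves, or directly as a movable class in the sense of the appendix) that positively dominates the geometry of $X$. The key geometric input is that, since $K_X = \Omega^{[n]}_X$ and $X$ is uniruled (indeed RC), the determinant $\det \Omega^{[1]}_X = \sO_X(K_X)$ has \emph{negative} degree against such a movable class $\gamma$: concretely $K_X \cdot \gamma < 0$. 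This is the slope input that will force vanishing.

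Next I would argue that $\Omega^{[1]}_X$ is \emph{semistable} with respect to $\gamma$ — or, if it is not, replace it by its maximal destabilizing subsheaf, whose existence is guaranteed by Appendix~\ref{sec:snsp}. Here factoriality enters crucially: it ensures that the relevant reflexive subsheaves have well-defined integral (Weil = Cartier, after taking determinants) divisor classes, so that slope arguments make sense and, more importantly, that a destabilizing subsheaf $\sF \subsetneq \Omega^{[1]}_X$ would have $\det \sF$ a genuine line bundle with $\mu_\gamma(\sF) \geq \mu_\gamma(\Omega^{[1]}_X)$. One then invokes the theorem (due to Miyaoka in the smooth case, and its extension to the klt/factorial setting via the Extension Theorem~\ref{thm:ext} and restriction to general members of the movable family) that on a uniruled variety the cotangent sheaf cannot be semipositive along a general free curve; combined with $K_X \cdot \gamma < 0$ this pins down the slopes. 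Either way one concludes that the maximal destabilizing subsheaf $\sQ$ (possibly all of $\Omega^{[1]}_X$) has $\mu_{\min,\gamma}(\Omega^{[1]}_X) < 0$, i.e.\ every quotient, and in particular every Harder--Narasimhan graded piece, has negative $\gamma$-slope; the cleanest route is to show directly $\mu_{\max,\gamma}(\Omega^{[1]}_X) < 0$ using that any subsheaf restricted to a general free rational curve $C \cong \bP^1$ is a subsheaf of $\Omega^1_{\bP^1}(\ast) \cong \sO(-2+\ast)$ with controlled negativity.

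The heart of the argument is then the tensor step: by the main result of Appendix~\ref{sec:snsp}, $\gamma$-semistability is preserved under tensor products (equivalently, the maximal slope is additive under $\otimes$ up to reflexive hull), so $\mu_{\max,\gamma}\bigl((\Omega^{[1]}_X)^{[m]}\bigr) \leq m \cdot \mu_{\max,\gamma}(\Omega^{[1]}_X) < 0$. A nonzero section $\sigma \in H^0\bigl(X,(\Omega^1_X)^{[m]}\bigr)$ would give an injection $\sO_X \hookrightarrow (\Omega^1_X)^{[m]}$ of reflexive sheaves (the saturation of the image is reflexive of rank one, hence a line bundle of degree $\geq 0$ against $\gamma$ since $\sO_X \cdot \gamma = 0 \leq$ its slope), contradicting $\mu_{\max,\gamma}\bigl((\Omega^{[1]}_X)^{[m]}\bigr) < 0$. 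Hence no such section exists and $H^0\bigl(X,(\Omega^1_X)^{[m]}\bigr) = 0$.

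The main obstacle I anticipate is making the slope/semistability formalism genuinely work with \emph{reflexive} rather than locally free sheaves on the singular space $X$: one must check that restriction to a general member of the movable family $\gamma$ behaves well (the curves avoid $X_{\sing}$ and the non-locally-free locus, and the restricted sheaf agrees with the honest pullback of $\Omega^1$), that determinants and slopes are computed correctly for Weil divisorial subsheaves (this is exactly where factoriality is indispensable — without it $\det$ of a reflexive subsheaf need not be $\bQ$-Cartier and the counterexample in Section~\ref{ssec:C1} intervenes), and that the tensor-stability result of the appendix applies to the reflexive tensor powers $(\Omega^1_X)^{[m]}$ and not merely to their restrictions to a curve. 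Establishing $\mu_{\max,\gamma}(\Omega^{[1]}_X) < 0$ rigorously — i.e.\ the negativity of the cotangent sheaf along general free rational curves, upgraded from $\bP^1$ to a statement about the movable class on all of $X$ — is the other delicate point, and is where the hypothesis of rational chain-connectedness (via Remark~\ref{rem:rcc}, giving honest rational connectedness and hence very free curves) does the real work.
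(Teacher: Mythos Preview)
Your strategy is essentially the paper's: pick the movable class $\alpha$ represented by the image of a very free rational curve on a resolution, use the tensor identity $\mu_\alpha^{\max}\bigl((\Omega^1_X)^{[m]}\bigr)=m\cdot\mu_\alpha^{\max}(\Omega^{[1]}_X)$ from Appendix~\ref{sec:snsp}, and derive a contradiction from a nonzero section (which forces $\mu_\alpha^{\max}(\Omega^{[1]}_X)\geq 0$). The one place where your write-up is vaguer than the paper is the mechanism for the negativity step: rather than ``restricting subsheaves to $C\cong\bP^1$'' (your parenthetical about $\Omega^1_{\bP^1}(\ast)$ is off --- the relevant bundle is the rank-$n$ restriction $\Omega^1_{\wtilde X}|_{\wtilde C}$, and the image curve $C\subset X$ may meet $X_{\sing}$), the paper takes the maximal destabilising $\sS\subseteq\Omega^{[1]}_X$, uses factoriality to make $\det\sS$ an honest line bundle inside $\Omega^{[r]}_X$, and then applies the pull-back morphism $\pi^*\Omega^{[r]}_X\to\Omega^r_{\wtilde X}$ of \cite[Thm.~4.3]{GKKP11} to obtain a generically injective map $\pi^*(\det\sS)|_{\wtilde C}\to\Omega^r_{\wtilde X}|_{\wtilde C}$ into an anti-ample bundle on $\wtilde C$, contradicting nefness of $\pi^*(\det\sS)|_{\wtilde C}$. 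Your detours through Miyaoka's theorem and $\mu_{\min}$ are unnecessary; the argument is purely about $\mu_{\max}$.
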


\begin{rem}[Relation between Theorems~\ref{thm:nopformsonrcc} and \ref{thm:rc}]
  Let $X$ be a normal space. Assume that there exists a $\bQ$-divisor $D$ on $X$
  such that $(X,D)$ is klt. If $X$ is factorial, then $X$ has canonical
  singularities, cf.~\cite[Cor.~2.35]{KM98}.
\end{rem}

\begin{rem}[Necessity of the assumption that $X$ is canonical]
  There are examples of rational surfaces $X$ with log terminal singularities
  whose canonical bundle is torsion or even ample,
  cf.~\cite[Section 10]{MR2931877} or
  \cite[Example~43]{MR2400877}. Since $H^0\bigl(X,\, \sO_X(mK_X) \bigr) \subset
  H^0\bigl(X,\, (\Omega_X^1)^{[m \cdot \dim X]} \bigr)$, these examples show
  that the assumption that $X$ has \emph{canonical} singularities cannot be
  omitted in Theorem~\ref{thm:rc}.
\end{rem}

\begin{rem}[Sharpness of Theorem~\ref{thm:rc}]
  In the class of varieties with canonical singularities Theorem~\ref{thm:rc} is
  sharp: Section~\ref{ssec:C1} contains an example of a rationally connected
  $\bQ$-factorial (but non-factorial) surface with canonical singularities which
  does carry non-trivial reflexive pluri-forms.
\end{rem}

The proof of Theorem~\ref{thm:rc} uses the notion of semistable sheaves on
singular spaces, where semistability is defined respect to a movable curve
class. Appendix~\ref{sec:snsp} contains detailed proofs of the necessary
foundational results used here.

\begin{proof}[Proof of Theorem~\ref{thm:rc}]
  We argue by contradiction and assume that there exists a positive number $m
  \in \bN$ and a non-trivial element
  $$
  \sigma \in H^0 \left( X,\, (\Omega^1_X)^{[m]} \right) \setminus \{ 0 \}.
  $$
  Let $\pi: \wtilde X \to X$ be a log resolution of $X$ with exceptional divisor
  $E$. By Remark~\ref{rem:rcc} the smooth variety $\wtilde X$ is rationally
  connected. As explained in \cite[IV.3.9.4]{K96}, we can therefore choose a
  dominating family of smooth rational curves in $\wtilde X$, and a general
  member $\wtilde C \cong \bP^1$ such that the following holds.
  \begin{equation}\label{eq:vielText}
    \text{The restriction of $\sT_{\wtilde X}$ to the smooth curve $\wtilde C$ is an ample vector bundle.}
  \end{equation} 
  By general choice, the curve $\wtilde C$ will not be contained in the
  $\pi$-exceptional set $E$.  We consider the image curve $C := \pi(\wtilde C)$
  with its reduced structure, and write $\alpha = [C]$ for the corresponding
  class in the Mori-cone $\overline{\NE}(X)$. Since $[\wtilde C]$ is movable,
  that is, $[\wtilde C]$ has non-negative intersection with all effective
  divisors, the class $[C]$ is movable as well.

  The section $\sigma$ defines an inclusion $\sO_X \subseteq
  (\Omega^1_X)^{[m]}$. Using the notation $\mu_\alpha^{\max}$ as introduced in
  Definition~\ref{def:semistability}, we see that $\mu_\alpha^{\max}
  \bigl((\Omega^1_X)^{[m]} \bigr) \geq 0$. It follows from
  Item~(\ref{prop:SSreflTensor}.\ref{il:p161}) of
  Proposition~\ref{prop:SSreflTensor} that
  $\mu_\alpha^{\max}\bigl(\Omega^{[1]}_X \bigr) \geq 0$.
  Proposition~\ref{prop:mumax} therefore allow us to find a subsheaf $\sS
  \subseteq \Omega_X^{[1]}$ of rank $r \geq 1$ satisfying
  \begin{equation}\label{eq:slopeofS}
    \mu_\alpha(\sS) = \mu_\alpha^{\max}\bigl(\Omega^{[1]}_X \bigr) \geq 0.
  \end{equation}
  Since $X$ is supposed to be factorial, the determinant $\det \sS$ is an
  invertible subsheaf of $\Omega_X^{[r]}$. Inequality \eqref{eq:slopeofS}
  implies that the restriction $(\det \sS)|_C$ is a nef line bundle on the
  (possibly singular) curve $C$. It follows that $\pi^*(\det \sS)|_{\wtilde C} =
  (\pi|_{\wtilde C})^*\bigl((\det \sS)|_C\bigr)$ is a nef line bundle on
  $\wtilde C$.
  
  Finally, recall from \cite[Thm.~4.3]{GKKP11} that there exists a pull-back map
  $\phi:~\pi^*\Omega^{[r]}_X \to \Omega^r_{\wtilde X}$, isomorphic away from
  $E$. Its restriction to $\wtilde C$ induces the following sequence of sheaf
  morphisms
  \begin{equation}\label{eq:sequence}
    \xymatrix{ %
      \pi^*(\det \sS)|_{\wtilde C} \ar[r] \ar@/^6mm/[rr]^{\psi := \text{ composition}} &
      \pi^*\Omega^{[r]}_X\bigr|_{\wtilde C} \ar[r]_{\phi} &
      \Omega^r_{\wtilde X}\bigr|_{\wtilde C}.
    }
  \end{equation}
  Since $\wtilde C$ is not contained in $E$, all arrows in~\eqref{eq:sequence}
  are generically injective; in particular, $\psi$ is not the trivial map.

  On the other hand, note that the restriction $\Omega^r_{\wtilde
    X}\bigr|_{\wtilde C}$ is negative owing to \eqref{eq:vielText}. Since both
  $\pi^*(\det \sS)|_{\wtilde C}$ and $\Omega^r_{\wtilde X}|_{\wtilde C}$ are
  locally free, and $\psi$ is non-trivial, it is an injection of a nef line
  bundle into a negative vector bundle, which is absurd. This contraction
  concludes the proof.
\end{proof}

\subsection{A counterexample in the $\bQ$-factorial setup}
\label{ssec:C1}

If $X$ is not factorial, the arguments used in the proof of Theorem~\ref{thm:rc}
break down: with the notation used in the proof, we can write
$$
(\pi|_{\wtilde C})^*(\det \sS) \cong \sT \oplus \sA
$$
where $\sT $ is torsion and $\sA$ locally free of rank one. The argument fails
because $\sA$ might be negative. The following example shows that
Theorem~\ref{thm:rc} is in fact no longer true in the non-factorial setting,
even if $X$ has the mildest form of $\bQ$-factorial singularities.

\begin{example}[A rationally connected surface supporting pluri-differential forms]\label{ex:rcowpf}
  \begin{figure}[t]
    \centering
    
    $$
    \xymatrix{
      \protect{
        \begin{tikzpicture}
          \draw (0,0) to [out=65, in=-65] (0,3.5);
          \draw (0.05,0) to [out=65, in=-65] (0.05,3.5);
          \fill (0.16,2.5)  node[right]{$\ast$ \scriptsize $A_1$ point};
          \fill (0.16,1.0)  node[right]{$\ast$ \scriptsize $A_1$ point};
        \end{tikzpicture}
      }
      &&&
      \protect{
        \begin{tikzpicture}
          \draw (0.5, 0.0) to [out=100, in=-50] (0.0, 1.4);
          \draw (0.0, 1.0) to [out=65, in=-65] (0.0, 2.5);
          \draw (0.05, 1.0) to [out=65, in=-65] (0.05, 2.5);
          \draw (0.0, 2.1) to [out=50, in=-100] (0.5, 3.5);
          \fill (0.2, 1.75) node[right]{\scriptsize $(-1)$};
          \fill (0.5, 3.0) node[left]{\scriptsize $(-2)$};
          \fill (0.5, 0.5) node[left]{\scriptsize $(-2)$};
        \end{tikzpicture}
      }
      \ar[rr]^{\text{blow-up}}_{\text{of }x'}
      \ar[lll]_{\text{contraction of}}^{(-2)-\text{curves}} &&
      \protect{
        \begin{tikzpicture}
          \draw (0.2, 0.0) to [out=65, in=-50] (0.0, 2.2);
          \draw (0, 1.3) to [out=50, in=-65] (0.2, 3.5);
          \fill (0.28,1.75) circle (2pt) node[right]{\scriptsize $x'$};
          \fill (-0.4, 3.0)  node[right]{\scriptsize $(-1)$};
          \fill (-0.4, 0.5)  node[right]{\scriptsize $(-1)$};
        \end{tikzpicture}
      }
      \ar[rr]^{\text{blow-up}}_{\text{of }x} &&
      \protect{
        \begin{tikzpicture}
          \draw (0,0) to [out=65, in=-65] (0,3.5);
          \fill (0.43,1.75) circle (2pt) node[right]{\scriptsize $x$};
        \end{tikzpicture}
      }
    }
    $$
    
    \caption{Birational Transformations Used in Example~\ref{ex:rcowpf}}
    \label{fig:btrans}
  \end{figure}
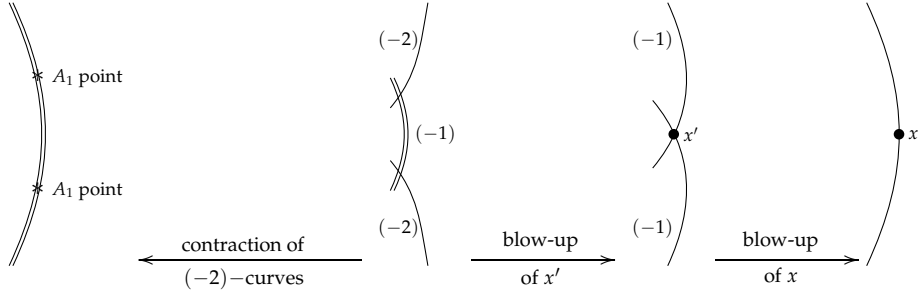

  Let $\pi' : X' \to \bP^1$ be any rational ruled surface. Choose four distinct
  points $q_1, \ldots, q_4$ in $\bP^1$. For each point $q_i$, perform the
  following sequence of birational transformations of the ruled surface,
  outlined also in Figure~\ref{fig:btrans}.
  \begin{enumerate}\label{en:blowups}
  \item Choose a point $x$, contained in the fibre over $q_i$, and blow up this
    point. The result is a surface with a map to $\bP^1$ such that the fibre
    over $q_i$ is the union of two reduced rational curves each with
    self-intersection number $(-1)$, meeting transversely in a point $x'$.
    
  \item\label{il:step2} Blow up the point $x'$. The result is a surface with a
    map to $\bP^1$ such that the fibre over $q_i$ is the union of two reduced
    rational curves each with self-intersection number $(-2)$, and one rational
    curve with self-intersection $(-1)$. The $(-2)$-curves are disjoint, the
    $(-1)$-curve appears in the fibre with multiplicity two.

  \item Blow down the $(-2)$-curves contained in the fibre over $q_i$. The
    result is a normal surface with a map to $\bP^1$ such that the set-theoretic
    fibre $F_i$ over $q_i$ is a smooth rational curve.  The curve $F_i$ appears
    in the cycle theoretic fibre over $q_i$ with multiplicity two. The surface
    has two singular points of type $A_1$ on $F_i$. Seen as a divisor, the curve
    $F_i$ is not Cartier, but $2\cdot F_i$ is.
  \end{enumerate}
  This way, we obtain a rational, rationally connected surface $\pi : X \to
  \bP^1$ containing eight singular points, two on each of the fibres $F_1,
  \ldots, F_4$.

  We claim that there exist sections in the second reflexive product $\bigl(
  \Omega^1_X \bigr)^{[2]}$. To this end, let $X^\circ \subset X$ be the smooth
  locus of $X$, and set $F_i^\circ := X^\circ \cap F_i$, for each $1 \leq i \leq
  4$. Finally, choose any point $p \in \bP^1 \setminus \{q_1, \ldots, q_4\}$ and
  let $F := \pi^{-1}(p)$ denote the associated fibre. Since the fibres over the
  $q_i$ all have multiplicity two, it is clear that the pull-back map of
  differentials,
  $$
  d(\pi|_{X^\circ}) : \underbrace{\pi^*(\Omega^1_{\bP^1})|_{X^\circ}}_{\cong
    \mathcal O_{X^\circ}(-2\cdot F)} \to \Omega^1_{X^\circ},
  $$
  has single zeros along the $F_i^\circ$. Accordingly, there exists a
  factorization
  \begin{equation}\label{eq:fact}
    \xymatrix{ %
      \pi^*(\Omega^1_{\bP^1})|_{X^\circ} \ar[r]
      \ar@/^.6cm/[rr]^{d(\pi|_{X^\circ})} & \mathcal O_{X^\circ}(-2\cdot F +
      \sum_i F_i^\circ) \ar[r]_(.77){\alpha} &
      \Omega^1_{X^\circ}. %
    }
  \end{equation}
  Setting $D := -2\cdot F + \sum_i F_i$, Factorization~\eqref{eq:fact} then
  gives a non-trivial morphism of reflexive sheaves
  $$
  \alpha^{[1]} : \mathcal O_X(D) \to \Omega^{[1]}_X, \quad \text{hence also}
  \quad \alpha^{[2]} : \mathcal O_X(2\cdot D) \to \bigl(\Omega^1_X\bigr)^{[2]} =
  \bigl(\Omega^1_X \otimes \Omega^1_X \bigr)^{**}.
  $$
  To finish the construction, observe that the divisor $D$ is $\bQ$-linearly
  equivalent to zero, but not linearly equivalent to zero. The divisor $2\cdot
  D$, however, is linearly equivalent to zero, giving a non-trivial map
  \begin{equation}\label{eq:Schnitt}
    \underbrace{H^0\bigl( X,\, \mathcal O_X(2\cdot D) \bigr)}_{\cong \bC}
    \to H^0\bigl( X,\, (\Omega^1_X)^{[2]} \bigr) \not = 0,
  \end{equation}
  that corresponds to a non-trivial section $\tau \in H^0\bigl( X,\,
  (\Omega^1_X)^{[2]} \bigr)$.
\end{example}

\begin{remq}\label{remq}
  There are other ways to see that the surface $X$ constructed in
  Example~\ref{ex:rcowpf} admits a pluri-form. Semistable reduction yields a
  diagram
  $$
  \xymatrix{ %
    \wtilde X \ar[rrrr]^{\text{2:1 cover}}_{\text{branched over the singularities}} \ar[d]_{\text{$\bP^1$-bundle}}^{\wtilde \pi} &&&& X \ar[d]^{\pi} \\
    E \ar[rrrr]^{\text{2:1 cover}}_{\text{branched over $q_1, \ldots, q_4$}} &&&& \bP^1 }
  $$
  where $\wtilde X$ is smooth, $E$ is an elliptic curve, and where the vertical
  arrows are quotients by the associated action of $G := \factor \bZ.2\bZ.$. It
  is not difficult to construct a $G$-invariant form
  $$
  \wtilde \tau \in H^0\bigl( \wtilde X,\, (\wtilde \pi^* \Omega^1_E)^{\otimes
    2}\bigr) \subset H^0\bigl( \wtilde X,\, (\Omega^1_{\wtilde X})^{\otimes
    2}\bigr)
  $$
  that corresponds to the form $\tau$ constructed above. One may ask if any
  pluri-form on a rationally connected space arises in one of the two ways
  indicated in our specific example.
\end{remq}

\begin{rem}
  Example~\ref{ex:rcowpf} underlines the fact that the extension theorem is not
  valid for pluri-forms, cf.~\cite[Example~3.1.3]{GKK08}.
\end{rem}

\section{Vanishing theorems of Kodaira-Akizuki-Nakano type}
\label{sect:KAN}

Recall the statement of the Kodaira-Akizuki-Nakano Vanishing Theorem in the
smooth setting.

\begin{thm}[Kodaira-Akizuki-Nakano Vanishing Theorem, \cite{Akizuki-Nakano54}]\label{thm:KANvanishing}
  Let $X$ be a smooth projective variety and let $\sL$ be an ample line bundle
  on $X$. Then
  \begin{align}
    H^q \bigl(X,\,\Omega^{p}_X \otimes \sL \bigr) & = 0 \quad \text{for $p + q
      > n$, and}\label{eq:KANv1} \\
    H^q \bigl(X,\,\Omega^{p}_X \otimes \sL^{-1} \bigr) & = 0 \quad \text{for $p
      + q < n$.}\label{eq:KANv2}
  \end{align}
\end{thm}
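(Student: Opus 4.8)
The plan is in two steps: reduce the two displayed vanishings to a single one by Serre duality, and then prove the remaining one by the Hodge-theoretic argument of Akizuki and Nakano.

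The first step is the observation that \eqref{eq:KANv1} and \eqref{eq:KANv2} are equivalent. Serre duality on the smooth projective variety $X$ of dimension $n$ gives a perfect pairing between $H^q\bigl(X,\,\Omega^p_X\otimes\sL\bigr)$ and $H^{n-q}\bigl(X,\,(\Omega^p_X)^\vee\otimes\omega_X\otimes\sL^{-1}\bigr)$. Since $\Omega^p_X=\bigwedge^p\Omega^1_X$ and $\det\Omega^1_X=\omega_X$, the standard isomorphism $(\bigwedge^p E)^\vee\cong\bigwedge^{n-p}E\otimes(\det E)^\vee$ for a rank-$n$ bundle $E$ yields $(\Omega^p_X)^\vee\otimes\omega_X\cong\Omega^{n-p}_X$, so the pairing reads
$$
H^q\bigl(X,\,\Omega^p_X\otimes\sL\bigr)\;\times\;H^{n-q}\bigl(X,\,\Omega^{n-p}_X\otimes\sL^{-1}\bigr)\;\longrightarrow\;\bC.
$$
Replacing $(p,q)$ by $(n-p,n-q)$ turns the inequality $p+q>n$ into $p+q<n$; hence it is enough to prove \eqref{eq:KANv1}.

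For \eqref{eq:KANv1} the approach is analytic. Because $\sL$ is ample, Kodaira's theorem provides a smooth Hermitian metric $h$ on $\sL$ whose Chern curvature $\Theta_h(\sL)$ is a positive $(1,1)$-form; I would fix on $X$ the Kähler metric whose fundamental form is $\frac{i}{2\pi}\,\Theta_h(\sL)$, so that the eigenvalues of $i\Theta_h(\sL)$ with respect to it are all equal to the constant $2\pi$ at every point of $X$. By Dolbeault's theorem together with Hodge theory for the $\bar\partial$-Laplacian acting on $\sL$-valued forms on the compact manifold $X$, every class in $H^q\bigl(X,\,\Omega^p_X\otimes\sL\bigr)$ is represented by a unique harmonic $(p,q)$-form $\alpha$ with values in $\sL$, that is, $\bar\partial\alpha=0$ and $\bar\partial^*\alpha=0$. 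The key input is the Bochner--Kodaira--Nakano identity
$$
\Delta_{\bar\partial}\;=\;\Delta_\partial\;+\;\bigl[\,i\Theta_h(\sL),\,\Lambda\,\bigr],
$$
where $\Lambda$ is the adjoint of the Lefschetz operator of the chosen Kähler metric and $\Delta_\partial$ is the Laplacian of the $\partial$-part of the Chern connection. Pairing this with the harmonic form $\alpha$, and using $\Delta_{\bar\partial}\alpha=0$ as well as $\langle\Delta_\partial\alpha,\alpha\rangle=\|\partial\alpha\|^2+\|\partial^*\alpha\|^2\ge0$, one obtains $0\ge\bigl\langle[\,i\Theta_h(\sL),\Lambda\,]\,\alpha,\,\alpha\bigr\rangle$. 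On the other hand, the standard commutator computation on a Kähler manifold shows that, with our normalisation of the metric, the Hermitian operator $[\,i\Theta_h(\sL),\Lambda\,]$ acts on $(p,q)$-forms as multiplication by the scalar $2\pi(p+q-n)$. Hence $0\ge 2\pi(p+q-n)\,\|\alpha\|^2$, and since $p+q>n$ this forces $\alpha=0$, i.e.\ $H^q\bigl(X,\,\Omega^p_X\otimes\sL\bigr)=0$.

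The main obstacle is the analytic infrastructure behind the last step: proving the Bochner--Kodaira--Nakano identity requires the Kähler identities together with a careful accounting of the curvature of the Chern connection, and identifying the eigenvalues of $[\,i\Theta_h(\sL),\Lambda\,]$ on $(p,q)$-forms is a somewhat delicate, though purely linear-algebraic, exercise with the Hodge operators; one also needs the Hodge decomposition for $\sL$-twisted Dolbeault cohomology, which rests on elliptic theory on the compact manifold $X$, and the fact that an ample line bundle carries a positively curved metric. An alternative, purely algebraic route is the method of Deligne and Illusie: spread $X$ and $\sL$ out over a finitely generated base, reduce modulo a large prime, lift to the Witt vectors of length two, use the Frobenius-twisted decomposition of the de Rham complex to deduce the vanishing in positive characteristic, and then pass back to characteristic zero by semicontinuity; but the analytic argument above is the shorter self-contained one.
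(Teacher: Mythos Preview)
Your argument is correct and is essentially the classical proof. Note, however, that the paper does not itself prove Theorem~\ref{thm:KANvanishing}: it is quoted as a known result from \cite{Akizuki-Nakano54}, and the only related content in the paper is Remark~\ref{rama}, which records exactly your first step (the Serre-duality equivalence of \eqref{eq:KANv1} and \eqref{eq:KANv2}) and points to Ramanujam's simplified proof. So there is no ``paper's own proof'' to compare against; your write-up supplies what the paper deliberately omits, and your Serre-duality reduction matches Remark~\ref{rama} verbatim in spirit.
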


\begin{rem}\label{rama}
  Assertions~\eqref{eq:KANv1} and \eqref{eq:KANv2} are equivalent to one another
  by Serre duality. Ramanujam \cite{Ramanujam72} gave a simplified proof of
  Theorem~\ref{thm:KANvanishing} and showed that it does not hold if one only
  requires $\sL$ to be semi-ample and big.
\end{rem}

Esnault and Viehweg generalised Theorem~\ref{thm:KANvanishing} to logarithmic
differentials, \cite{MR853449}. Moreover, Kodaira-Akizuki-Nakano vanishing has
been shown to hold for sheaves of reflexive differentials on varieties with
quotient singularities, see~\cite{ArapuraKAN}, as well as on toric varieties,
see \cite[Thm.~9.3.1]{CoxLittleSchenck}.

In this section, we prove similar vanishing results for reflexive differentials
on varieties with more general singularities. However, these vanishing results
are restricted to special values of $p$ and $q$. It turns out that even for
spaces with isolated terminal Gorenstein singularities
Theorem~\ref{thm:KANvanishing} does not hold for arbitrary $p + q > n$,
respectively $p+q<n$. A corresponding example is provided in
Section~\ref{ssec:nopKANvan}.

\subsection{Partial vanishing results for lc and klt pairs}
\label{ssec:pKANvan}

In this section we prove some partial generalisations of
Theorem~\ref{thm:KANvanishing} to lc and klt pairs.

\begin{prop}[KAN-type vanishing for lc pairs, analogue of~\eqref{eq:KANv2}]\label{prop:partVan1}
  Let $X$ be a normal projective variety of dimension $n$, let $D$ be an
  effective $\bQ$-divisor on $X$ such that $(X,D)$ is log canonical, and let
  $\sL \in \Pic(X)$ be an ample line bundle.
  \begin{align}
    \label{eq:pv2}& H^0 \bigl(X,\,\Omega^{[p]}_X(\log \lfloor D \rfloor)
    \otimes \sL^{-1}\bigr) = 0 && \text{for all $p<n$,\,\,\, and} \\
    \label{eq:pv3}& H^1 \bigl(X,\,\Omega^{[p]}_X(\log \lfloor D \rfloor)
    \otimes \sL^{-1}\bigr) = 0 && \text{for all $p<n-1$.} 
  \end{align}
\end{prop}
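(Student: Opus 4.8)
The strategy is to reduce the statement on the singular pair $(X,D)$ to the classical Kodaira–Akizuki–Nakano and Esnault–Viehweg vanishing theorems on a log resolution, using the Extension Theorem~\ref{thm:ext} (or rather its logarithmic generalisation \cite[Thm.~1.5]{GKKP11}). Let $\pi:\wtilde X\to X$ be a log resolution of $(X,D)$, write $E$ for the $\pi$-exceptional divisor, and let $\wtilde D$ be the strict transform of $\lfloor D\rfloor$ together with $E$, arranged so that $\wtilde D$ is a reduced simple normal crossing divisor. The Extension Theorem gives $\pi_*\Omega^p_{\wtilde X}(\log\wtilde D) = \Omega^{[p]}_X(\log\lfloor D\rfloor)$. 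Tensoring with the locally free sheaf $\sL^{-1}$ and using the projection formula, it therefore suffices to control the cohomology groups $H^q\bigl(\wtilde X,\,\Omega^p_{\wtilde X}(\log\wtilde D)\otimes\pi^*\sL^{-1}\bigr)$ for $q=0$ and $q=1$, together with the $R^j\pi_*$ terms that enter the Leray spectral sequence.

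First I would dispose of the $q=0$ statement \eqref{eq:pv2}. Here one only needs $H^0\bigl(X,\,\Omega^{[p]}_X(\log\lfloor D\rfloor)\otimes\sL^{-1}\bigr) = H^0\bigl(\wtilde X,\,\Omega^p_{\wtilde X}(\log\wtilde D)\otimes\pi^*\sL^{-1}\bigr)$ directly from the Extension Theorem and the projection formula; no spectral sequence is needed for global sections. Since $\pi^*\sL$ is nef and big, the Esnault–Viehweg logarithmic vanishing theorem \cite{MR853449} — in the form valid for nef and big line bundles, or after perturbing $\pi^*\sL$ by a small ample class supported away from where it matters — yields $H^0 = 0$ as long as $p < n$, which is exactly the stated range. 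For the $q=1$ statement \eqref{eq:pv3}, the Leray spectral sequence for $\pi$ applied to $\Omega^p_{\wtilde X}(\log\wtilde D)\otimes\pi^*\sL^{-1}$ gives an exact sequence
\begin{equation*}
0 \to H^1\bigl(X,\,\Omega^{[p]}_X(\log\lfloor D\rfloor)\otimes\sL^{-1}\bigr) \to H^1\bigl(\wtilde X,\,\Omega^p_{\wtilde X}(\log\wtilde D)\otimes\pi^*\sL^{-1}\bigr),
\end{equation*}
using once more that $\pi_*$ of the upstairs sheaf is $\Omega^{[p]}_X(\log\lfloor D\rfloor)$; the $H^0(X, R^1\pi_*(\cdots)\otimes\sL^{-1})$ term sits to the left and is harmless for an injectivity statement. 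Hence it is enough to show the group upstairs vanishes, and for $p < n-1$ this again follows from logarithmic KAN-type vanishing on the smooth variety $\wtilde X$ with the nef and big line bundle $\pi^*\sL^{-1}$ — concretely, $H^1\bigl(\wtilde X,\Omega^p_{\wtilde X}(\log\wtilde D)\otimes\pi^*\sL^{-1}\bigr)=0$ holds whenever $p+1 < n$.

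The main obstacle, and the point requiring care, is that $\pi^*\sL$ is only nef and big, not ample, so the naive invocation of Akizuki–Nakano is not available (cf.~Ramanujam's counterexample, Remark~\ref{rama}). The correct tool here is the Esnault–Viehweg vanishing theorem for nef and big line bundles, which does hold in the cohomological degrees $q=0,1$ in the stated numerical ranges precisely because one stays in the "extreme" low-degree part of the KAN range; this is exactly why the proposition is restricted to $q\le 1$ rather than to all $p+q<n$. A clean way to handle the nef-and-big issue is to write $\pi^*\sL \sim_{\bQ} A + F$ with $A$ ample and $F$ effective $\pi$-exceptional, and then run the Esnault–Viehweg machinery (covering trick plus Hodge-theoretic vanishing) on $\wtilde X$ relative to the divisor $\wtilde D + F$; the exceptional nature of $F$ guarantees that the extra boundary contributions do not spoil the computation of $\pi_*$ and $R^1\pi_*$. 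I would then assemble these pieces: Extension Theorem to pass up to $\wtilde X$, Leray to bound the cohomology in terms of the smooth case, and Esnault–Viehweg for nef and big divisors to kill the relevant groups in degrees $0$ and $1$.
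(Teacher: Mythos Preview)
Your overall strategy---pass to a log resolution via the Extension Theorem, use the Leray five-term sequence to inject $H^1$ downstairs into $H^1$ upstairs, then kill the cohomology on $\wtilde X$---is exactly what the paper does. The only substantive difference is in the vanishing result invoked on the resolution.

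For \eqref{eq:pv2}, the paper simply cites the Bogomolov--Sommese vanishing theorem for log canonical pairs \cite[Thm.~7.2]{GKKP11}, which gives the $H^0$ statement directly on $X$; your reduction to the smooth case is how that theorem is proved, so this is the same argument one level deeper. For \eqref{eq:pv3}, the precise result you need upstairs is \emph{Steenbrink vanishing} \cite[Thm.~2(a')]{Steenbrink85}: if $\pi:\wtilde X\to X$ is a resolution, $E$ the snc divisor containing the exceptional locus, and $\sL$ ample on $X$, then $H^q\bigl(\wtilde X,\Omega^p_{\wtilde X}(\log E)\otimes\pi^*\sL^{-1}\bigr)=0$ for all $p+q<n$. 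This is the theorem the paper cites, and it dispatches both \eqref{eq:pv2} and \eqref{eq:pv3} in one stroke once you have the Leray injection.

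Your justification that ``Esnault--Viehweg for nef and big holds in degrees $q=0,1$ because one is in the extreme part of the KAN range'' is not correct as stated: there is no such general theorem, and Ramanujam's example already kills this in degree $1$. What makes the vanishing go through is not the numerics but the two structural features you do identify in your workaround---that $\pi^*\sL$ is the pullback of an ample bundle along a birational map, and that the log divisor contains the full exceptional locus. Those are precisely Steenbrink's hypotheses, and your sketch (write $\pi^*\sL\sim_{\bQ} A+F$ with $F$ exceptional, absorb $F$ into the boundary) is essentially how his theorem is proved. So: right idea, but cite Steenbrink rather than an imprecise nef-and-big Esnault--Viehweg.
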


\begin{rem}[Vanishing for dlt pairs]
  If $(X, D)$ is not only lc but dlt, then additionally $H^q \bigl(X,\,
  \sL^{-1}\bigr) = 0$ holds for all $q<n$. This follows by observing that $X$ is
  Cohen-Macaulay~\cite[Thm.~5.22]{KM98} and by using the more general result,
  \cite[Cor.~6.6]{KSS10}, that vanishing holds already if $(X,D)$ is lc and
  Cohen-Macaulay.
\end{rem}

\begin{proof}[Proof of Proposition~\ref{prop:partVan1}]
  First note that \eqref{eq:pv2} is a special case of the Bogomolov-Sommese
  vanishing theorem for log canonical pairs, \cite[Thm.~7.2]{GKKP11}.

  For the other case, choose a log resolution $\pi: \wtilde X \to X$, consider
  the set
  $$
  E := \bigl( \text{strict transform of } \supp \lfloor D \rfloor \bigr) \cup
  \bigl(\text{$\pi$-exceptional set}\bigr).
  $$
  and let $\sF_p := \Omega^p_{\wtilde X}(\log E) \otimes \pi^*\sL^{-1}$. The
  projection formula and the extension theorem \cite[Thm.~1.5]{GKKP11} together
  imply that $$\pi_* \sF_p = \Omega^{[p]}_X(\log \lfloor D \rfloor) \otimes
  \sL^{-1}.$$ In order to prove \eqref{eq:pv3} we need to show that $H^1
  \bigl(X,\,\pi_*\sF_p \bigr) = 0$ if $p<n-1$. For this, we will use the Leray
  spectral sequence, and Steenbrink vanishing,
  \cite[Thm.~2(a')]{Steenbrink85}. The latter asserts that
  \begin{equation}\label{eq:SBv}
    H^q \bigl(\wtilde X,\, \sF_p \bigr) = 0 \quad \text{for $p+q<n$.}
  \end{equation}

  Now consider the beginning of the five term exact sequence associated with the
  Leray spectral sequence,
  $$
  0 \to \underbrace{H^1\bigl( X,\, \pi_*\sF_p \bigr)}_{= E^{1,0}_2} \to
  \underbrace{H^1\bigl( \wtilde X,\, \sF_p \bigr)}_{= E^1} \to \underbrace{H^0
    \bigl( X,\, R^1\pi_*\sF_p \bigr)}_{= E^{0,1}_2} \to \cdots
  $$
  Steenbrink vanishing~\eqref{eq:SBv} gives that $E^1 = 0$, hence  $E^{1,0}_2 = 0$ 
  and therefore
  \eqref{eq:pv3}.
\end{proof}

The second proposition of this section partially generalizes
\eqref{eq:KANv1}. The authors would like to thank the referee for pointing out
that parts of the proposition also hold in the log canonical setting.

\begin{prop}[KAN-type vanishing for klt pairs, analogue of~\eqref{eq:KANv1}]\label{prop:partVan2}
  Let $X$ be a normal projective variety of dimension $n$, let $D$ be an
  effective $\bQ$-divisor on $X$, and let $\sL \in \Pic(X)$ be an ample line
  bundle. Then
  \begin{align}
    \label{eq:pv4} & H^q \bigl(X,\, \omega_X \otimes \sL \bigr) = 0 &&
    \text{for all $q>0$ if $(X,D)$ is klt, and} \\
    \label{eq:pv5} & H^n \bigl(X,\, \Omega^{[p]}_X \otimes \sL \bigr) = 0 &&
    \text{for all $p>0$ if $(X,D)$ is lc.}
  \end{align}
\end{prop}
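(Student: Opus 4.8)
The plan is to deduce both vanishing statements from the Extension Theorem \cite[Thm.~1.5]{GKKP11}, a Leray spectral sequence argument, and classical Kodaira--Nakano vanishing on a log resolution. Fix a log resolution $\pi:\wtilde X\to X$ of $(X,D)$, write $E$ for the union of the $\pi$-exceptional set with the strict transform of $\supp\lfloor D\rfloor$, and set $\sL':=\pi^*\sL$, which is big and nef but in general only semiample. For \eqref{eq:pv5}, observe that by the Extension Theorem and the projection formula one has $\pi_*\bigl(\Omega^p_{\wtilde X}\otimes\sL'\bigr)=\Omega^{[p]}_X\otimes\sL$ (for the non-logarithmic statement one works with $\Omega^p_{\wtilde X}$, or uses that restriction along $\lfloor D\rfloor$ only helps). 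The point is that $H^n$ of the downstairs sheaf injects into $H^n$ upstairs: in the Leray spectral sequence $E_2^{a,b}=H^a\bigl(X,R^b\pi_*(\Omega^p_{\wtilde X}\otimes\sL')\bigr)\Rightarrow H^{a+b}(\wtilde X,\Omega^p_{\wtilde X}\otimes\sL')$, the only term contributing to $H^n$ on the $b=0$ row is $E_2^{n,0}=H^n(X,\Omega^{[p]}_X\otimes\sL)$, and since $\dim X=n$ there are no differentials into it and none out of it (the target $E_2^{n+2,-1}$ vanishes and $E_2^{n,0}$ receives nothing because higher-page incoming differentials originate in negative cohomological degree). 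Hence $H^n(X,\Omega^{[p]}_X\otimes\sL)$ is a subquotient — in fact a sub — of $H^n(\wtilde X,\Omega^p_{\wtilde X}\otimes\sL')$. Now invoke Serre duality on $\wtilde X$: $H^n(\wtilde X,\Omega^p_{\wtilde X}\otimes\pi^*\sL)^\vee\cong H^0(\wtilde X,\Omega^{n-p}_{\wtilde X}\otimes\pi^*\sL^{-1})$, and the latter is a space of sections of $\Omega^{n-p}_{\wtilde X}\otimes(\text{anti-big-nef})$, which vanishes for $p>0$ by the Bogomolov--Sommese type vanishing for semiample big line bundles — equivalently, any section would give an inclusion of a big line bundle into $\Omega^{n-p}_{\wtilde X}$ contradicting that the latter has no big subsheaves on a variety with trivial-to-negative canonical-type positivity; cleanest is to use \cite[Thm.~7.2]{GKKP11} downstairs after pushing forward, as in the proof of Proposition~\ref{prop:partVan1}. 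This settles \eqref{eq:pv5} for lc pairs.

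For \eqref{eq:pv4}, the sheaf in question is the dualizing sheaf $\omega_X$ (recall $\omega_X=\Omega^{[n]}_X$), and here the klt hypothesis lets us use a genuine vanishing theorem upstairs rather than just a section-vanishing. Write $K_{\wtilde X}=\pi^*K_X+\sum a_i E_i$ with $a_i>-1$ since $(X,D)$ is klt (discard $D$ or keep it, the discrepancies only improve); rounding up, $\lceil\sum a_iE_i\rceil=:N$ is an effective $\pi$-exceptional divisor with $\pi_*\sO_{\wtilde X}(N)=\sO_X$, and $\pi_*\bigl(\omega_{\wtilde X}\otimes\sO_{\wtilde X}(N)\otimes\sL'\bigr)=\omega_X\otimes\sL$ by the projection formula together with $\lceil K_{\wtilde X}-\pi^*K_X\rceil$ being effective and exceptional. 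The divisor $N+\pi^*\sL$ is of the form (effective exceptional) plus (big and nef), so Kawamata--Viehweg vanishing on the smooth variety $\wtilde X$ gives $R^q\pi_*\bigl(\omega_{\wtilde X}\otimes\sO_{\wtilde X}(N)\otimes\sL'\bigr)=0$ for $q>0$ and also $H^q\bigl(\wtilde X,\,\omega_{\wtilde X}\otimes\sO_{\wtilde X}(N)\otimes\pi^*\sL\bigr)=0$ for $q>0$ (here $\sL$ ample on $X$ makes $N+\pi^*\sL$ big and nef in the appropriate fractional sense; if $N$ is merely a reduced-rounding one perturbs by a small ample to stay in the KV range). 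The Leray spectral sequence then degenerates enough that $H^q(X,\omega_X\otimes\sL)=H^q(\wtilde X,\omega_{\wtilde X}\otimes\sO_{\wtilde X}(N)\otimes\pi^*\sL)=0$ for all $q>0$, which is \eqref{eq:pv4}. Alternatively and more cleanly: $H^q(X,\omega_X\otimes\sL)$ is Serre-dual to $H^{n-q}(X,\sL^{-1})$ when $X$ is Cohen--Macaulay, but klt spaces need not be CM, so the resolution argument above is the safe route.

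The main obstacle I anticipate is bookkeeping the positivity of $\pi^*\sL$: it is big and nef but not ample, so classical Nakano vanishing does not literally apply on $\wtilde X$, and one must route through either the algebraic Bogomolov--Sommese vanishing of \cite[Thm.~7.2]{GKKP11} (which already incorporates the resolution and is exactly tailored to this) for the $H^n$ statement, or through Kawamata--Viehweg (with a discrepancy-rounding divisor added to keep things fractionally positive) for the $\omega_X$ statement. A secondary subtlety is making sure the push-forward identities $\pi_*(\Omega^p_{\wtilde X}\otimes\pi^*\sL)=\Omega^{[p]}_X\otimes\sL$ and $\pi_*(\omega_{\wtilde X}(N)\otimes\pi^*\sL)=\omega_X\otimes\sL$ are justified — the first is the projection formula plus the Extension Theorem, the second is the projection formula plus effectivity and exceptionality of the rounded discrepancy divisor, which uses the klt (resp.\ lc) hypothesis precisely. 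Everything else is the same five-term / Leray argument already used in the proof of Proposition~\ref{prop:partVan1}, with the roles of $H^0,H^1$ replaced by $H^n$ and the roles of $H^q$.
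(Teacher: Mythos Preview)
Your argument for \eqref{eq:pv5} contains a genuine gap in the Leray step. You assert that $E_2^{n,0}$ ``receives nothing because higher-page incoming differentials originate in negative cohomological degree,'' but the incoming differential $d_r$ into $E_r^{n,0}$ has source $E_r^{n-r,\,r-1}$, and for $2\le r\le n$ the first index $n-r$ is nonnegative. These sources need not vanish. Hence $E_\infty^{n,0}$ is only a \emph{quotient} of $E_2^{n,0}$, and the edge map
\[
E_2^{n,0}=H^n\bigl(X,\,\Omega^{[p]}_X\otimes\sL\bigr)\;\longrightarrow\;H^n\bigl(\wtilde X,\,\Omega^p_{\wtilde X}\otimes\pi^*\sL\bigr)
\]
is not injective in general; vanishing of the target does not force vanishing of the source. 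The paper avoids passing to a resolution altogether for \eqref{eq:pv5}: since $\omega_X$ is a dualising sheaf on the projective scheme $X$ (top-degree duality $H^n(X,\sF)^*\cong\Hom(\sF,\omega_X)$ requires no Cohen--Macaulay hypothesis, cf.~\cite[p.~241]{Ha77}), one has
\[
H^n\bigl(X,\,\Omega^{[p]}_X\otimes\sL\bigr)^*\;\cong\;\Hom\bigl(\Omega^{[p]}_X\otimes\sL,\,\omega_X\bigr)\;\cong\;\Hom\bigl(\sL,\,\Omega^{[n-p]}_X\bigr)
\]
via the reflexive wedge pairing, and the right-hand side vanishes for $p>0$ by Bogomolov--Sommese on $X$ itself, \cite[Thm.~7.2]{GKKP11}. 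This is precisely the ``cleanest'' route you mention as an aside but then do not take.

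For \eqref{eq:pv4} your approach is in the right spirit but overworked, and the invocation of Kawamata--Viehweg is not clean: $N+\pi^*\sL$ is big but typically not nef, since an effective exceptional divisor is rarely nef. The paper simply observes that $\pi_*\omega_{\wtilde X}=\omega_X$ (extension theorem, or rationality of klt singularities) and then cites Grauert--Riemenschneider; with $R^q\pi_*\omega_{\wtilde X}=0$ for $q>0$ the Leray spectral sequence degenerates and one finishes with vanishing for the big and nef bundle $\pi^*\sL$ on the smooth $\wtilde X$. No discrepancy bookkeeping is needed. Incidentally, klt spaces \emph{are} Cohen--Macaulay \cite[Thm.~5.22]{KM98}, so the Serre-duality shortcut you dismiss as unsafe would in fact also work.
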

\begin{proof}
  To prove~\eqref{eq:pv4}, choose a log resolution $\pi: \wtilde X \to X$. The
  extension theorem for differential forms on log canonical pairs,
  \cite[Thm.~1.5]{GKKP11}\footnote{or the fact that $X$ has rational
    singularities, \cite[Thm.~5.22]{KM98}} then asserts that $\omega_X = \pi_*
  \omega_{\wtilde X}$, and the assertion of \eqref{eq:pv4} is just the
  Grauert-Riemenschneider vanishing theorem \cite[p.~263]{GR70}.

  To prove~\eqref{eq:pv5}, consider the chain of isomorphisms,
  \begin{align*}
    H^n \bigl( X,\, \Omega^{[p]}_X \otimes \sL \bigr)^* & \cong \Hom
    \bigl(\Omega^{[p]}_X \otimes \sL,\, \omega_X \bigr) && \text{since
      $\omega_X$ is dualising, \cite[p.~241]{Ha77}}\\
    & \cong \Hom \bigl( \sL,\, \Omega^{[n-p]}_X \bigr) && \text{pairing
      assoc.~with wedge product.}
  \end{align*}
  The Bogomolov-Sommese vanishing theorem for log canonical pairs,
  \cite[Thm.~7.2]{GKKP11}, asserts that the last space is zero.
\end{proof}

\subsection{A counterexample to Kodaira-Akizuki-Nakano vanishing for klt spaces}
\label{ssec:nopKANvan}

We will show by way of example that the Kodaira-Akizuki-Nakano type vanishing
theorems of Propositions~\ref{prop:partVan1} and \ref{prop:partVan2} does not
hold for all values of $p$ and $q$, not even for Gorenstein spaces with isolated
terminal singularities.

\begin{example}[A fourfold with terminal singularities violating KAN vanishing]\label{ex:KANVisFalse}
  We construct a 4-dimensional terminal variety $X$ following the steps outlined
  in the following diagram.
  $$
  \xymatrix{ %
    \wtilde X \ar[d]_{\txt{\tiny $\pi$ \\\tiny contraction map}}
    \ar[rrrr]^{\psi, \text{ $\bP^1$-bundle}}_{\wtilde X := \bP_Y(\sO_Y(1)
      \oplus \sO_Y )} &&&& Y
    \ar[rrr]^{\phi, \text{ $\bP^1$-bundle}}_{Y := \bP_{\bP^2}(T_{\bP^2})} &&& \bP^2\\
    X }
  $$
  To describe the construction in detail, consider the 3-dimensional smooth
  variety $Y := \bP_{\bP^2} \bigl(\sT_{\bP^2} \bigr)$. The tangent bundle
  $\sT_{\bP^2}$ of the projective plane being ample, by definition the
  tautological bundle $\sO_Y(1) \in \Pic(Y)$ is ample. Better still, using the
  Euler sequence to present $\sT_{\bP^2}$ as a quotient of
  $\sO_{\bP^2}(1)^{\oplus 3}$, one shows that the ample bundle $\sO_Y(1)$ is in
  fact very ample.

  The bundle $\sO_Y(1)$ induces an embedding $Y \to \bP^N$, with projectively
  normal image. Let $X \subset \bP^{N+1}$ be the cone over $Y$. The variety $X$
  is then normal and has a single isolated singularity, the vertex $x \in
  X$. Blowing up $x$, we obtain a resolution of singularities, $\pi : \wtilde X
  \to X$. The variety $\wtilde X$ is isomorphic to the $\bP^1$-bundle $\psi:
  \bP_Y( \sO_Y(1) \oplus \sO_Y) \to Y$. The $\pi$-exceptional set $E$ is
  canonically identified with $\bP_{Y} \bigl( \sO_Y \bigr) \subseteq \wtilde
  X$. The divisor $E$ is thus a section of $\psi$ and naturally isomorphic to
  $Y$.  Its normal bundle is $N_{E/\wtilde X} \cong \sO_Y(-1)$. Finally,
  consider the ample bundle $\sL := \sO_X(1) \in \Pic(X)$.
\end{example}

The following two remarks summarise the main properties of $X$.

\begin{rem}[Dualising sheaves of $Y$, $X$ and $\wtilde X$]\label{rem:ex1}
  An elementary computation shows that the canonical bundle of $Y$ is given as
  \begin{equation}\label{eq:KY}
    \omega_Y \cong \sO_Y(-2)
  \end{equation}
  Using the bundle structure of $\wtilde X$, Equation~\eqref{eq:KY} and the
  adjunction formula, one computes the canonical bundle of $\wtilde X$ as
  \begin{equation}\label{eq:KX}
    \omega_{\wtilde X} \cong \sO_{\wtilde X}(E) \otimes \pi^* \sO_X(-3).
  \end{equation}
  Equation~\eqref{eq:KX} has two consequences:
  \begin{enumerate}
    \setcounter{enumi}{\value{equation}}
  \item\label{il:kabeljau} The dualising sheaf of $X$ is invertible, $\omega_X
    \cong \sO_X(-3) = \sO_{\bP^{N+1}}(-3)|_X$.
  \item The discrepancy formula for $\pi$ reads $K_{\wtilde X} =
    \pi^*(K_X)+E$.
  \end{enumerate}
  In particular, we obtain that the isolated singularity $x \in X$ is
  terminal. Recall from \cite[Thm.~5.22]{KM98} that terminal singularities are
  rational, hence Cohen-Macaulay. Assertion~(\ref{rem:ex1}.\ref{il:kabeljau})
  thus implies that $X$ is in fact Gorenstein.
\end{rem}

We are now ready to formulate the main results of this section. The following
two propositions show that both versions of the Kodaira-Akizuki-Nakano vanishing
theorem fail for the variety constructed in Example~\ref{ex:KANVisFalse}.

\begin{prop}[Generalisation of Proposition~\ref{prop:partVan1} does not hold]\label{prop:fail1}
  In the setup of Example~\ref{ex:KANVisFalse}, we have $H^2 \bigl(X,\,
  \Omega^{[1]}_X \otimes \sL^{-1} \bigr) \not = 0$.
\end{prop}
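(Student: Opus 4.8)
The plan is to compute $H^2(X,\Omega^{[1]}_X \otimes \sL^{-1})$ by pushing the computation up to the resolution $\pi : \wtilde X \to X$ and using the explicit $\bP^1$-bundle structures at our disposal. First I would record, via the Extension Theorem~\cite[Thm.~1.5]{GKKP11} (applicable since $X$ has terminal, hence klt, singularities) together with the projection formula, that $\Omega^{[1]}_X \otimes \sL^{-1} = \pi_*\bigl(\Omega^1_{\wtilde X} \otimes \pi^*\sL^{-1}\bigr)$. I would then feed this into the Leray spectral sequence for $\pi$. Since $\pi$ is the blow-up of an isolated point with exceptional divisor $E \cong Y$, the higher direct images $R^j\pi_*(\Omega^1_{\wtilde X}\otimes\pi^*\sL^{-1})$ for $j>0$ are supported on the point $x$, and can be computed from the cohomology of $E$ and its infinitesimal neighbourhoods using the normal bundle $N_{E/\wtilde X}\cong \sO_Y(-1)$ and the conormal exact sequence $0 \to N^*_{E/\wtilde X} \to \Omega^1_{\wtilde X}|_E \to \Omega^1_E \to 0$ (or a formal-neighbourhood / theorem-on-formal-functions argument). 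The goal is to identify a contribution to $H^0(X, R^1\pi_*(\dots))$ or to $H^2(\wtilde X,\dots)$ that survives to $H^2(X,\Omega^{[1]}_X\otimes\sL^{-1})$.

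The computation on $\wtilde X = \bP_Y(\sO_Y(1)\oplus\sO_Y)$ itself would proceed by the relative Euler sequence for $\psi : \wtilde X \to Y$, namely $0 \to \Omega^1_{\wtilde X/Y} \to \psi^*(\sO_Y(-1)\oplus\sO_Y)\otimes\sO_{\wtilde X}(1) \to \sO_{\wtilde X} \to 0$ together with $0 \to \psi^*\Omega^1_Y \to \Omega^1_{\wtilde X} \to \Omega^1_{\wtilde X/Y} \to 0$, tensored with $\pi^*\sL^{-1}$. Here one must express $\pi^*\sL = \pi^*\sO_X(1)$ in terms of the tautological bundle $\sO_{\wtilde X}(1)$ and $\psi^*\sO_Y(1)$; the cone construction gives $\pi^*\sO_X(1) \cong \sO_{\wtilde X}(1)$ (the tautological bundle of the projective bundle realization, which restricts to $\sO_Y(1)$ on the infinity section and is trivial on $E$), a point I would verify carefully. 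Pushing forward to $Y$ via $R^\bullet\psi_*$ and then taking cohomology on $Y = \bP_{\bP^2}(\sT_{\bP^2})$, which is again a $\bP^1$-bundle over $\bP^2$ via $\phi$, reduces everything to cohomology of twists of $\Omega^1_{\bP^2}$, $\sT_{\bP^2}$, and line bundles on $\bP^2$ — all of which are computable by Bott's formula and the Euler sequence.

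I expect the nonvanishing to come from a single ``geometric'' term: a class in $H^1\bigl(Y, \Omega^1_Y \otimes (\text{appropriate twist})\bigr)$ or in $H^2\bigl(\bP^2, \Omega^1_{\bP^2}\bigr) \cong \bC$ (Hodge theory of $\bP^2$), propagated through the two bundle projections. The cleanest route is probably to show directly that $R^1\pi_*\bigl(\Omega^1_{\wtilde X}\otimes\pi^*\sL^{-1}\bigr)$ has a one-dimensional stalk at $x$ coming from $H^1\bigl(E,\, \Omega^1_{\wtilde X}|_E \otimes \pi^*\sL^{-1}|_E\bigr) = H^1\bigl(Y,\Omega^1_{\wtilde X}|_E\bigr)$ (since $\pi^*\sL^{-1}$ is trivial on $E$), and that this class does not die when we pass to global cohomology on $X$ — equivalently, that the relevant edge map in the Leray sequence is nonzero, or that the $H^2$ on $\wtilde X$ that could absorb it vanishes by Steenbrink/Nakano-type vanishing with the ample twist removed.

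The main obstacle will be bookkeeping: keeping track of the two layers of relative Euler sequences while carrying the twist by $\pi^*\sL^{-1} = \sO_{\wtilde X}(-1)$, and in particular correctly pinning down which cohomology groups vanish (so that the spectral sequences degenerate enough to pin the answer to exactly $\bC$ rather than merely ``nonzero''). Since the proposition only claims nonvanishing, I would, if the full computation gets unwieldy, settle for exhibiting one explicit nonzero class: take the canonical generator of $H^2(\bP^2,\Omega^1_{\bP^2})\cong\bC$, pull it back along $\phi$ and then along $\psi$ to get a class in $H^2\bigl(\wtilde X, \psi^*\phi^*\Omega^1_{\bP^2}\bigr) \hookrightarrow H^2\bigl(\wtilde X,\Omega^1_{\wtilde X}\bigr)$, twist it into $H^2(\wtilde X, \Omega^1_{\wtilde X}\otimes\pi^*\sL^{-1})$ using a section of $\pi^*\sL$ vanishing on $E$, and check that its image in $H^2(X,\Omega^{[1]}_X\otimes\sL^{-1})$ under $\pi_*$ (Leray) is nonzero by a local computation near $x$. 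Verifying this last non-vanishing — that the class is not a coboundary after the twist and after pushing down — is the crux, and I would handle it by Serre duality on $X$, rephrasing $H^2(X,\Omega^{[1]}_X\otimes\sL^{-1})$ as (a subspace of) $H^2\bigl(X, \Omega^{[3]}_X \otimes\sL\bigr)^*$ via $\omega_X\cong\sO_X(-3)$ and the wedge pairing $\Omega^{[1]}_X \otimes \Omega^{[3]}_X \to \omega_X$, which connects this proposition to the companion statement about $H^2(X,\Omega^{[3]}_X\otimes\sL)$ advertised in the introduction.
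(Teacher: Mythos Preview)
Your strategy---Leray spectral sequence for $\pi$ and computation of $R^1\pi_*$---is the right one, but you are missing the one trick that makes it work cleanly: switch from $\Omega^1_{\wtilde X}$ to $\Omega^1_{\wtilde X}(\log E)$. The Extension Theorem still gives $\pi_*\bigl(\Omega^1_{\wtilde X}(\log E)\otimes\pi^*\sL^{-1}\bigr)=\Omega^{[1]}_X\otimes\sL^{-1}$, but now Steenbrink vanishing \cite[Thm.~2(a')]{Steenbrink85} applies directly and kills \emph{both} $H^1$ and $H^2$ of $\Omega^1_{\wtilde X}(\log E)\otimes\pi^*\sL^{-1}$ on $\wtilde X$ (since $1+1<4$ and $1+2<4$). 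The five-term sequence then collapses to an isomorphism
\[
H^2\bigl(X,\Omega^{[1]}_X\otimes\sL^{-1}\bigr)\;\cong\;H^0\bigl(X,\,R^1\pi_*\Omega^1_{\wtilde X}(\log E)\bigr),
\]
so you only need $R^1\pi_*\Omega^1_{\wtilde X}(\log E)\neq 0$ at $x$. This falls out of the residue sequence $0\to\Omega^1_{\wtilde X}\to\Omega^1_{\wtilde X}(\log E)\to\sO_E\to 0$: since $\pi_*\Omega^1_{\wtilde X}\to\pi_*\Omega^1_{\wtilde X}(\log E)$ is an isomorphism, you get $0\to\bC_x\to R^1\pi_*\Omega^1_{\wtilde X}\to R^1\pi_*\Omega^1_{\wtilde X}(\log E)$, and the stalk of $R^1\pi_*\Omega^1_{\wtilde X}$ at $x$ is at least two-dimensional because the Chern classes $c_1(\psi^*\sO_Y(1))$ and $c_1((\phi\circ\psi)^*\sO_{\bP^2}(1))$ restrict to linearly independent classes on $E\cong Y$.

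Without the log poles, your plan runs into trouble. Steenbrink vanishing is stated for $\Omega^p_{\wtilde X}(\log E)\otimes\pi^*\sL^{-1}$, not for $\Omega^p_{\wtilde X}\otimes\pi^*\sL^{-1}$, so your appeal to ``Steenbrink/Nakano-type vanishing'' does not go through as written. Moreover, in the five-term sequence it is $H^1(\wtilde X,\sF)$, not $H^2(\wtilde X,\sF)$, that can absorb the class in $H^0(X,R^1\pi_*\sF)$ and prevent it from reaching $H^2(X,\pi_*\sF)$; your paragraph has this backwards. The brute-force computation via the two layers of relative Euler sequences could in principle recover the result, but it is substantially more laborious than the paper's argument and you have not indicated how you would control $H^1(\wtilde X,\Omega^1_{\wtilde X}\otimes\pi^*\sL^{-1})$. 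Finally, the Serre-duality route to $H^2(X,\Omega^{[3]}_X\otimes\sL)$ is circular: that is the companion Proposition~\ref{prop:fail2}, proved separately and by an even longer computation.
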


\begin{prop}[Generalisation of Proposition~\ref{prop:partVan2} does not hold]\label{prop:fail2}
  In the setup of Example~\ref{ex:KANVisFalse}, we have $H^2 \bigl( X,\,
  \Omega^{[3]}_X \otimes \sL \bigr) \not= 0$.
\end{prop}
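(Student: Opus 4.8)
The plan is to prove the dual statement via Serre duality on $X$ and then transfer it to the smooth model $\wtilde X$ by Grothendieck duality, where it becomes --- up to the same higher-direct-image considerations --- the non-vanishing already established in Proposition~\ref{prop:fail1}. By Remark~\ref{rem:ex1}, $X$ is a projective Gorenstein --- in particular Cohen--Macaulay --- fourfold with invertible dualising sheaf $\omega_X \cong \sO_X(-3)$, so Serre duality gives
$$
H^2\bigl(X,\, \Omega^{[3]}_X \otimes \sL\bigr)^\vee \;\cong\; \operatorname{Ext}^2_X\bigl(\Omega^{[3]}_X \otimes \sL,\, \omega_X\bigr) \;\cong\; \operatorname{Ext}^2_X\bigl(\Omega^{[3]}_X,\, \omega_X \otimes \sL^{-1}\bigr),
$$
and it suffices to show that the last group is non-zero.

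To compute this $\operatorname{Ext}$-group I would pass to $\wtilde X$. Since $\Omega^3_{\wtilde X}$ is locally free with $\sHom_{\wtilde X}(\Omega^3_{\wtilde X}, \omega_{\wtilde X}) \cong \Omega^1_{\wtilde X}$, and since $X$ is Cohen--Macaulay with $K_{\wtilde X} = \pi^*K_X + E$ by Remark~\ref{rem:ex1} --- so that $\pi^!\sO_X \cong \sO_{\wtilde X}(E)$ and $\pi^!(\omega_X \otimes \sL^{-1}) \cong \omega_{\wtilde X} \otimes \pi^*\sL^{-1}$ --- Grothendieck duality for the proper birational morphism $\pi \colon \wtilde X \to X$ yields
$$
R\sHom_X\bigl(R\pi_* \Omega^3_{\wtilde X},\, \omega_X \otimes \sL^{-1}\bigr) \;\cong\; R\pi_*\bigl(\Omega^1_{\wtilde X} \otimes \pi^*\sL^{-1}\bigr).
$$
Granting the vanishing $R^q\pi_*\Omega^3_{\wtilde X} = 0$ for $q \geq 1$, the Extension Theorem~\ref{thm:ext} (applicable because $X$ is terminal, hence klt) shows that $R\pi_*\Omega^3_{\wtilde X} = \pi_*\Omega^3_{\wtilde X} = \Omega^{[3]}_X$ is a sheaf concentrated in degree $0$; taking $2$-nd hypercohomology of the displayed isomorphism then gives
$$
\operatorname{Ext}^2_X\bigl(\Omega^{[3]}_X,\, \omega_X \otimes \sL^{-1}\bigr) \;\cong\; H^2\bigl(\wtilde X,\, \Omega^1_{\wtilde X} \otimes \pi^*\sL^{-1}\bigr).
$$
This is precisely the group whose non-triviality underlies Proposition~\ref{prop:fail1} (where $\Omega^{[1]}_X \otimes \sL^{-1} = \pi_*(\Omega^1_{\wtilde X} \otimes \pi^*\sL^{-1})$ and the same $R^q\pi_*$-analysis enters), so the proof is complete modulo the stated vanishing.

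The main obstacle is therefore the concrete computation on $\wtilde X$: both the auxiliary vanishing $R^q\pi_*\Omega^3_{\wtilde X} = 0$ ($q \geq 1$) and the non-vanishing $H^2\bigl(\wtilde X, \Omega^1_{\wtilde X} \otimes \pi^*\sL^{-1}\bigr) \neq 0$. For this I would unwind $\wtilde X$ along the tower of $\bP^1$-bundles $\wtilde X = \bP_Y\bigl(\sO_Y(1) \oplus \sO_Y\bigr) \xrightarrow{\psi} Y = \bP_{\bP^2}\bigl(\sT_{\bP^2}\bigr) \xrightarrow{\phi} \bP^2$, express $\Omega^1_{\wtilde X}$ and $\Omega^3_{\wtilde X}$ by splicing the relative Euler/cotangent sequences of $\psi$ and $\phi$ --- using $\omega_Y \cong \sO_Y(-2)$, $\omega_{\wtilde X} \cong \sO_{\wtilde X}(E) \otimes \pi^*\sO_X(-3)$ and $N_{E/\wtilde X} \cong \sO_Y(-1)$ from Remark~\ref{rem:ex1} --- and identify $\pi^*\sO_X(1)$ in terms of $\sO_{\wtilde X}(1)$ and the complementary section $\bP_Y(\sO_Y(1)) \subseteq \wtilde X$. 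The higher direct images $R^q\pi_*(-)$ are then evaluated fibrewise along $\psi$, and the residual cohomology over $Y$ and $\bP^2$ by Bott vanishing and the Euler sequence on $\bP^2$. I expect the bookkeeping --- tracking the various twists and the two disjoint sections of $\psi$ coming from the summands, one of which is the exceptional divisor $E$ --- to be the laborious part; conceptually the argument is exhausted by Serre duality, Grothendieck duality for $\pi$, and the Extension Theorem.
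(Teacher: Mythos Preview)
Your reduction via Serre and Grothendieck duality is elegant in outline, but it rests on a hypothesis that is \emph{false} in this very example: you assume $R^q\pi_*\Omega^3_{\wtilde X}=0$ for $q\geq 1$, whereas in fact $R^1\pi_*\Omega^3_{\wtilde X}\neq 0$. This non-vanishing is not a technical nuisance to be checked later---it is the heart of the matter. Concretely, one computes $H^1\bigl(E,\,\Omega^3_{\wtilde X}|_E\bigr)\neq 0$ (using $\Omega^1_{\wtilde X}|_E\cong\Omega^1_Y\oplus\sO_Y(1)$ and the non-vanishing $H^1\bigl(Y,\,\Omega^2_Y\otimes\sO_Y(1)\bigr)\neq 0$, which in turn comes from $H^1\bigl(\bP^2,\,\sT_{\bP^2}\otimes\omega_{\bP^2}\bigr)\cong\bC$), and then shows that the restriction maps from higher infinitesimal neighbourhoods of $E$ are surjective, so by formal functions $R^1\pi_*\Omega^3_{\wtilde X}\neq 0$ at the vertex. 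The paper's proof proceeds exactly this way: combined with $H^1\bigl(\wtilde X,\,\Omega^3_{\wtilde X}\otimes\pi^*\sL\bigr)=0$, the five-term Leray sequence for $\Omega^3_{\wtilde X}\otimes\pi^*\sL$ injects the non-zero skyscraper $H^0\bigl(X,\,R^1\pi_*\Omega^3_{\wtilde X}\otimes\sL\bigr)$ into $H^2\bigl(X,\,\Omega^{[3]}_X\otimes\sL\bigr)$.

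Once $R^1\pi_*\Omega^3_{\wtilde X}\neq 0$, your Grothendieck duality isomorphism no longer yields $\operatorname{Ext}^2_X\bigl(\Omega^{[3]}_X,\,\omega_X\otimes\sL^{-1}\bigr)\cong H^2\bigl(\wtilde X,\,\Omega^1_{\wtilde X}\otimes\pi^*\sL^{-1}\bigr)$: the complex $R\pi_*\Omega^3_{\wtilde X}$ has a genuine degree-one term, and the hyper-Ext spectral sequence acquires contributions from $\operatorname{Ext}^i_X\bigl(R^1\pi_*\Omega^3_{\wtilde X},\,\omega_X\otimes\sL^{-1}\bigr)$ that you would have to control. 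So the clean duality bridge to Proposition~\ref{prop:fail1} collapses, and you are back to analysing $R^1\pi_*\Omega^3_{\wtilde X}$ directly---which is precisely the paper's route. In short: the ``auxiliary vanishing'' you defer to the end is the one thing that must fail for the proposition to be true.
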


The proofs of Proposition~\ref{prop:fail1} and \ref{prop:fail2}, both of which
rely on somewhat lengthy cohomology computations, are given in
Subsections~\ref{ssec:popf1} and \ref{ssec:popf2} below.

\subsubsection{KAN-vanishing and the Du~Bois complex}

Terminal singularities are rational, and therefore Du~Bois, see
\cite{Kovacs99}. By definition, this means that the zeroth graded piece of the
filtered de Rham (or Du~Bois) complex $\underline{\Omega}_X^\bullet$ is
quasi-isomorphic to $\sO_X$. One might wonder whether on terminal or more
generally log canonical spaces this remains true for higher degrees, that is,
whether or not the $p$-th graded piece of the Du~Bois complex is
quasi-isomorphic to $\Omega_X^{[p]}[-p]$, the complex having the single sheaf
$\Omega_X^{[p]}$ in the $p$-th place, for all values of $p$, cf.~\cite[Rem.~on
p.~180]{PetersSteenbrinkBook}. The Example~\ref{ex:KANVisFalse} constructed
above shows that this question has to be answered negatively.

\begin{prop}
  Let $X$ be the variety constructed in Example~\ref{ex:KANVisFalse}. Then, the
  $3$rd graded piece of the filtered de Rham complex of $X$ is not
  quasi-isomorphic to $\Omega_X^{[3]}[-3]$.
\end{prop}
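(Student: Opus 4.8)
The plan is to connect the failure of Kodaira–Akizuki–Nakano vanishing, established in Proposition~\ref{prop:fail1} and \ref{prop:fail2}, to the structure of the Du~Bois complex via the Du~Bois analogue of Steenbrink/Kodaira vanishing. Recall that for the $p$-th graded piece $\underline{\Omega}^p_X$ of the filtered de Rham complex of a projective variety $X$ carrying an ample line bundle $\sL$, one has the vanishing $\bH^q\bigl(X,\,\underline{\Omega}^p_X \otimes \sL^{-1}\bigr) = 0$ for $p + q < n$ (this is the Du~Bois version of Kodaira–Akizuki–Nakano vanishing, due to Guillén–Navarro Aznar–Pascual-Gainza–Puerta, see also \cite[Cor.~6.5]{PetersSteenbrinkBook}). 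Our strategy is to argue by contradiction: if the $3$rd graded piece $\underline{\Omega}^3_X$ \emph{were} quasi-isomorphic to $\Omega^{[3]}_X[-3]$, then this hypercohomology vanishing would force $H^{q-3}\bigl(X,\,\Omega^{[3]}_X \otimes \sL^{-1}\bigr) = 0$ for $q < n = 4$, i.e. $H^0\bigl(X,\,\Omega^{[3]}_X \otimes \sL^{-1}\bigr) = 0$, which is automatic and gives no contradiction directly. So instead we should use the \emph{other} Du~Bois vanishing, in the direction $p + q > n$, or equivalently pass through Serre duality on the Du~Bois side.

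More precisely, the clean route is: by the Du~Bois Kodaira–Akizuki–Nakano vanishing in the form $\bH^q\bigl(X,\,\underline{\Omega}^p_X \otimes \sL\bigr) = 0$ for $p + q > n$ (again from \cite{PetersSteenbrinkBook} or Guillén et al.), taking $p = 3$ and $q = 2$ so that $p + q = 5 > 4 = n$, we obtain $\bH^2\bigl(X,\,\underline{\Omega}^3_X \otimes \sL\bigr) = 0$. On the other hand, if $\underline{\Omega}^3_X \simeq \Omega^{[3]}_X[-3]$ in the derived category, then $\bH^2\bigl(X,\,\underline{\Omega}^3_X \otimes \sL\bigr) \cong H^{2-(-3)}$—wait, one must be careful with the shift convention; with $\underline{\Omega}^3_X$ sitting in degree $3$ we have $\bH^k\bigl(X,\,\Omega^{[3]}_X[-3] \otimes \sL\bigr) = H^{k-3}\bigl(X,\,\Omega^{[3]}_X \otimes \sL\bigr)$, so the natural index to hit $H^2\bigl(X,\,\Omega^{[3]}_X \otimes \sL\bigr)$, which is nonzero by Proposition~\ref{prop:fail2}, is $k = 5 = n + 1 > n$. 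Thus the quasi-isomorphism would give $\bH^5\bigl(X,\,\underline{\Omega}^3_X \otimes \sL\bigr) \cong H^2\bigl(X,\,\Omega^{[3]}_X \otimes \sL\bigr) \neq 0$, contradicting Du~Bois KAN vanishing (since $3 + 5 > 4$). Parallel to this, one can run the dual argument with Proposition~\ref{prop:fail1}: $\bH^q\bigl(X,\,\underline{\Omega}^p_X \otimes \sL^{-1}\bigr) = 0$ for $p + q < n$, and a hypothetical quasi-isomorphism $\underline{\Omega}^1_X \simeq \Omega^{[1]}_X[-1]$ would give $\bH^3\bigl(X,\,\underline{\Omega}^1_X \otimes \sL^{-1}\bigr) \cong H^2\bigl(X,\,\Omega^{[1]}_X \otimes \sL^{-1}\bigr)$, which is nonzero by Proposition~\ref{prop:fail1}, while $1 + 3 = 4 = n$ is borderline—so the sharper statement for the degree-$3$ piece is what the vanishing actually pins down, confirming that the $3$rd graded piece is the correct place to see the obstruction.

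The key steps in order are therefore: (i) recall the precise statement of Du~Bois–Kodaira–Akizuki–Nakano vanishing for the graded pieces of the filtered de Rham complex of a projective variety twisted by a (co)ample line bundle, with careful bookkeeping of the cohomological degree shift built into the definition of $\underline{\Omega}^p_X$; (ii) invoke Proposition~\ref{prop:fail2} (together with Remark~\ref{rem:ex1}, which guarantees $X$ is projective with $\sL = \sO_X(1)$ ample and $n = 4$) to get $H^2\bigl(X,\,\Omega^{[3]}_X \otimes \sL\bigr) \neq 0$; (iii) observe that a quasi-isomorphism $\underline{\Omega}^3_X \simeq \Omega^{[3]}_X[-3]$ would convert this nonvanishing into $\bH^5\bigl(X,\,\underline{\Omega}^3_X \otimes \sL\bigr) \neq 0$; (iv) note $3 + 5 = 8 > 4 = n$, so Du~Bois KAN vanishing forces this hypercohomology group to be zero—a contradiction. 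The main obstacle I anticipate is purely bookkeeping: getting the degree and sign conventions for $\underline{\Omega}^p_X$ (whether it is placed in degree $0$ or degree $p$, and whether one uses $\sL$ or $\sL^{-1}$) consistent with the statement of the vanishing theorem one cites, so that the numerical inequality $p + q > n$ genuinely fails on the nose; once the conventions are fixed, the argument is a one-line consequence of the already-established cohomology computations in Example~\ref{ex:KANVisFalse} and Proposition~\ref{prop:fail2}.
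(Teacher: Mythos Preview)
Your proposal is correct and, once you strip away the exploratory detours, lands on exactly the argument the paper gives: assume $\underline{\Omega}^3_X \simeq \Omega^{[3]}_X[-3]$, apply the Guill\'en--Navarro Aznar--Puerta--Steenbrink vanishing $\bH^m\bigl(X,\,\mathrm{Gr}_F^p\underline{\Omega}_X^\bullet \otimes \sL\bigr)=0$ for $m>n$ with $m=5>4$, and contradict Proposition~\ref{prop:fail2}. The only cosmetic issue is that you oscillate between two index conventions (first ``$p+q>n$ with $q=2$'', then ``$k=5$''); the paper states the vanishing simply as $m>n$ for the hypercohomology degree $m$, which makes the bookkeeping cleaner and avoids the confusion you yourself flag at the end.
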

\begin{proof}
  Denoting the filtered de Rham complex of $X$ by
  $(\underline{\Omega}_X^\bullet, F)$, the vanishing theorem of Guillen-Navarro
  Aznar-Puerta-Steenbrink \cite[Thm.~7.29]{PetersSteenbrinkBook} states that for
  any ample line bundle $\mathscr{L}$ on $X$ we have
  \begin{equation}\label{eq:Guillen}
    \mathbb{H}^m\bigl(X, \, \mathrm{Gr}_F^p \underline{\Omega}_X^\bullet \otimes \mathscr{L} \bigr) = 0 \quad \forall m > n.
  \end{equation}
  Suppose that $\mathrm{Gr}_F^3 \underline{\Omega}_X^\bullet$ is
  quasi-isomorphic to $\Omega_X^{[3]}[-3]$. Then \eqref{eq:Guillen} would imply
  that
  $$
  H^{2}\bigl(X, \Omega_X^{[3]} \otimes \mathscr{L}\bigr) \cong
  \mathbb{H}^{5}\bigl(X, \Omega_X^{[3]}[-3] \otimes \mathscr{L} \bigr) \cong
  \mathbb{H}^5\bigl(X, \, \mathrm{Gr}_F^3 \underline{\Omega}_X^\bullet \otimes
  \mathscr{L} \bigr) = 0,
  $$
  contradicting Proposition~\ref{prop:fail2} above.
\end{proof}

\subsubsection{Proof of Proposition~\ref*{prop:fail1}}
\label{ssec:popf1}

Proposition~\ref{prop:fail1} follows essentially from the Leray spectral
sequence. The following lemma summarises the relevant statements in our setting.

\begin{lem}\label{lem:isol}
  Let $X$ be a normal variety such that the pair $(X, \emptyset)$ is log
  canonical with isolated singularities. Assume furthermore that $\dim X \geq
  4$. Let $\pi: \wtilde X \to X$ be a log resolution of singularities, with
  $\pi$-exceptional divisor $E \subset \wtilde X$. If $\sL \in \Pic(X)$ is any
  ample line bundle, then
  \begin{equation}\label{eq:pisol1}
    H^2 \bigl( X,\, \Omega^{[1]}_X \otimes \sL^{-1} \bigr) \simeq H^0 \bigl( X,\,
    R^1\pi_* \, \Omega^1_{\wtilde X}(\log E) \bigr).
  \end{equation}
  In particular, it follows that the left hand side of~\eqref{eq:pisol1} is
  independent of the ample line bundle $\sL$.
\end{lem}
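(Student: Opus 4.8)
The plan is to use the Leray spectral sequence for $\pi$ together with the standard control on the terms coming from the isolated singularity. First I would set $\sF := \Omega^1_{\wtilde X}(\log E) \otimes \pi^*\sL^{-1}$, so that the projection formula and the Extension Theorem~\cite[Thm.~1.5]{GKKP11} give $\pi_*\sF = \Omega^{[1]}_X \otimes \sL^{-1}$ (just as in the proof of Proposition~\ref{prop:partVan1}). The higher direct images $R^q\pi_*\sF = R^q\pi_*\Omega^1_{\wtilde X}(\log E)\otimes \sL^{-1}$ are supported on the finite singular set of $X$, hence are skyscraper sheaves; consequently $H^j(X, R^q\pi_*\sF) = 0$ for all $j>0$ and all $q>0$, and $H^0(X, R^q\pi_*\sF) \simeq H^0(X, R^q\pi_*\Omega^1_{\wtilde X}(\log E))$ because tensoring a skyscraper sheaf with an invertible sheaf does not change its global sections.

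Next I would read off the relevant corner of the Leray spectral sequence $E_2^{j,q} = H^j(X, R^q\pi_*\sF) \Rightarrow H^{j+q}(\wtilde X, \sF)$. The only $E_2$-terms with $j+q = 2$ that can be nonzero are $E_2^{2,0} = H^2(X,\pi_*\sF)$ and $E_2^{0,2} = H^0(X, R^2\pi_*\sF)$ (the term $E_2^{1,1} = H^1(X, R^1\pi_*\sF)$ vanishes since $R^1\pi_*\sF$ is a skyscraper). The differentials into and out of $E_2^{2,0}$ all land in or emanate from terms of the shape $E_2^{j,q}$ with $j>0$, $q>0$, which vanish; hence $E_\infty^{2,0} = E_2^{2,0} = H^2(X, \Omega^{[1]}_X\otimes\sL^{-1})$. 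On the other hand I would invoke Steenbrink vanishing~\cite[Thm.~2(a')]{Steenbrink85}, $H^m(\wtilde X, \Omega^p_{\wtilde X}(\log E)\otimes\pi^*\sL^{-1}) = 0$ for $p+m<n$; with $p=1$ and $m=2$ this applies precisely because $\dim X = n \geq 4$, so $H^2(\wtilde X,\sF) = 0$. The five-term exact sequence $0 \to E_2^{1,0} \to H^1(\wtilde X,\sF) \to E_2^{0,1} \to E_2^{2,0} \to H^2(\wtilde X,\sF)$ then forces $E_2^{2,0}$ to be a quotient of $H^0(X, R^1\pi_*\sF) = E_2^{0,1}$; more precisely, since $H^2(\wtilde X,\sF)=0$ and (by a dimension count or by Steenbrink with $p=0$, or directly since $E\to$ point) the relevant comparison maps degenerate, I expect to identify $H^2(X,\Omega^{[1]}_X\otimes\sL^{-1})$ with a subquotient of $H^0(X, R^1\pi_*\Omega^1_{\wtilde X}(\log E))$ and then, using that $R^0\pi_*\sF$ and the higher skyscrapers have no cohomology interaction, pin it down to the claimed isomorphism.

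The last step — getting an honest isomorphism rather than just an inequality of dimensions — is the main obstacle, and it comes down to checking that the edge map $H^2(\wtilde X,\sF) \to E_2^{0,2}$ side of the spectral sequence does not interfere: one needs that the differential $d_2\colon E_2^{0,1}\to E_2^{2,0}$ is surjective (immediate from $H^2(\wtilde X,\sF)=0$) and that nothing else maps onto $E_2^{2,0}$ on later pages, which holds since $E_r^{2,0}$ only receives differentials from $E_r^{2-r,r-1}$, a group that is zero for $r\geq 2$ as it has negative first index for $r\geq 3$ and equals $E_2^{0,1}$ for $r=2$. Combined with the left-exactness giving $E_2^{2,0}\hookrightarrow$ coker on the appropriate segment, I would then extract the isomorphism~\eqref{eq:pisol1}. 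Finally, the ``in particular'' clause is automatic: the right-hand side of~\eqref{eq:pisol1} makes no reference to $\sL$, so the left-hand side is independent of the chosen ample line bundle.
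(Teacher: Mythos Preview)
Your approach is the same as the paper's---Leray five-term sequence plus Steenbrink vanishing---but the execution contains a genuine error and a gap.

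The error: you claim that ``the differentials into and out of $E_2^{2,0}$ all land in or emanate from terms of the shape $E_2^{j,q}$ with $j>0$, $q>0$'', and conclude $E_\infty^{2,0} = E_2^{2,0}$. This is false. The differential $d_2 \colon E_2^{0,1} \to E_2^{2,0}$ has source with $j=0$, and it is \emph{precisely this map} that furnishes the isomorphism~\eqref{eq:pisol1}. Your later paragraphs implicitly contradict this claim when you argue that $d_2$ is surjective; if $d_2$ were both zero and surjective, $E_2^{2,0}$ would vanish, which is not what you want.

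The gap: you invoke Steenbrink only for $H^2(\wtilde X,\sF)=0$ (case $p=1$, $m=2$, requiring $n\geq 4$), which gives surjectivity of $d_2$ via the five-term sequence. You never use $H^1(\wtilde X,\sF)=0$ (case $p=1$, $m=1$, requiring only $n\geq 3$), and without it you cannot conclude injectivity. The paper uses both vanishings: with $H^1=H^2=0$, the five-term sequence
\[
0 \to E_2^{1,0} \to H^1(\wtilde X,\sF) \to E_2^{0,1} \xrightarrow{d_2} E_2^{2,0} \to H^2(\wtilde X,\sF)
\]
collapses immediately to $E_2^{0,1}\cong E_2^{2,0}$. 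No further spectral-sequence bookkeeping is needed, and your third paragraph can be dropped entirely once you insert the missing $H^1$-vanishing.
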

\begin{proof}
  As in the proof of Proposition~\ref{prop:partVan1}, set $\sF_p :=
  \Omega^1_{\wtilde X}(\log E) \otimes \pi^*\sL^{-1}$ and consider the following
  excerpt from the exact five term exact sequence associated with the Leray
  spectral sequence,
  \begin{equation}\label{eq:pisol2}
    \underbrace{H^1\bigl( \wtilde X,\, \sF_p \bigr)}_{= E^1} \to
    \underbrace{H^0 \bigl( X,\, R^1\pi_*\sF_p \bigr)}_{= E^{0,1}_2} \to
    \underbrace{H^2\bigl( X,\, \pi_* \sF_p \bigr)}_{= E^{2,0}_2} \to \underbrace{H^2
      \bigl( \wtilde X,\, \sF_p \bigr)}_{= E^2}
  \end{equation}
  As before, Steenbrink vanishing \cite[Thm.~2(a')]{Steenbrink85} and the
  assumption that $\dim X \geq 4$ give that $E^1 = E^2 = 0$. Using the
  projection formula and the extension theorem \cite[Thm.~1.5]{GKKP11} to
  identify $\pi_* \sF_p$ with $\Omega^{[1]}_X\otimes \sL^{-1}$,
  Sequence~\eqref{eq:pisol2} reads
  $$
  H^2 \bigl( X,\, \Omega^{[1]}_X \otimes \sL^{-1} \bigr) = E^{2,0}_2 \cong
  E^{0,1}_2 = H^0\bigl( X,\, R^1\pi_* \, \Omega^1_{\wtilde X}(\log E) \otimes
  \sL^{-1} \bigr).
  $$
  Since $X$ has only isolated singularities, the push-forward sheaf $R^1\pi_*
  \Omega^1_{\wtilde X}(\log E)$ has finite support, and
  $$
  H^0\bigl( X,\, R^1\pi_* \, \Omega^1_{\wtilde X}(\log E) \otimes \sL^{-1}
  \bigr) \simeq H^0\bigl( X,\, R^1\pi_* \, \Omega^1_{\wtilde X}(\log E) \bigr),
  $$
  finishing the proof of Lemma~\ref{lem:isol}.
\end{proof}

\begin{proof}[Proof of Proposition~\ref{prop:fail1}]
  Lemma~\ref{lem:isol} asserts that it suffices to verify that the push-forward
  $R^1\pi_* \, \Omega^1_{\wtilde X}({\rm log}\ E)$, which is a skyscraper sheaf
  whose support equals the vertex $x \in X$, is not the zero sheaf. To this end,
  consider the residue sequence for logarithmic differentials,
  $$
  \xymatrix{ %
    0 \ar[r] & \Omega^1_{\wtilde X} \ar[r] & \Omega^1_{\wtilde X}(\log E)
    \ar[r] & \sO_E \ar[r] & 0. %
  }
  $$
  Recalling from the extension theorem that the inclusion $\pi_*
  \Omega^1_{\wtilde X} \to \pi_* \Omega^1_{\wtilde X}(\log E)$ is an
  isomorphism, the push-down yields an exact sequence
  \begin{equation}\label{eq:xs2}
    0 \to \underbrace{\pi_* \sO_E}_{= \bC_x} \to R^1\pi_*\Omega^1_{\wtilde X} \to R^1\pi_*
    \Omega^1_{\wtilde X}(\log E).
  \end{equation}
  In~\eqref{eq:xs2}, the symbol $\bC_x$ denotes the skyscraper sheaf supported
  on $x$ with stalk $\bC$.
 
  Next, observe that for any open neighborhood $U = U(x) \subseteq X$, the Chern
  classes
  $$
  c_1 \bigl( \psi^* \sO_Y(1) \bigr) \quad \text{and} \quad c_1 \bigl(
  (\phi\circ\psi)^* \sO_{\bP^2}(1) \bigr)
  $$
  yield two linearly independent elements of $H^1\bigl(\pi^{-1}(U),\,
  \Omega^1_{\wtilde X} \bigr)$, since their restrictions to $E\cong Y$ are
  linearly independent. The stalk of $R^1\pi_*\Omega^1_{\wtilde X}$ is therefore
  at least two-dimensional. Consequently, $R^1\pi_* \Omega^1_{\wtilde X}(\log E)
  \not = 0$ by the exact sequence~\eqref{eq:xs2}. This concludes the proof of
  Proposition~\ref{prop:fail1}.
\end{proof}

\subsubsection{Proof of Proposition~\ref*{prop:fail2}}
\label{ssec:popf2}

The proof of Proposition~\ref{prop:fail2} is similar to, but more involved than
the proof of Proposition~\ref{prop:fail1}. As in Section~\ref{ssec:popf1}, we
start giving a number of remarks and lemmas computing cohomology groups
relevant for our line of reasoning. Once these results are established, the
proof of Proposition~\ref{prop:fail2} follows on Page~\pageref{pf:prop:fail2}.

\begin{rem}[Hodge numbers of $Y$ and $\wtilde X$]\label{rem:ex2}
  It follows immediately from the construction of $Y$ and $\wtilde X$ as
  $\bP^1$-bundles that both spaces can be written as the disjoint union of
  affine subvarieties,
  $$
  Y = \bigcup^\bullet_i Y_i \quad \text{and} \quad \wtilde X =
  \bigcup^\bullet_j \wtilde X_j,
  $$
  such that the following holds.
  \begin{enumerate-p}
  \item\label{il:stoer} All $Y_i$, respectively $\wtilde X_j$ are algebraically
    isomorphic to affine spaces $\bC^{n_{i}}$, respectively $\bC^{n_j}$ where
    $n_{i}, n_j \in \bN$ are suitable numbers.
  \item\label{il:pollak} For all $i, j$, the irreducible components of
    $\overline{Y_i} \setminus Y_i$ and $\overline{\wtilde X_j} \setminus \wtilde
    X_j$ are smooth subvarieties of $Y$ and $\wtilde X$, respectively.
  \end{enumerate-p}
  Assertion~(\ref{rem:ex2}.\ref{il:stoer}) implies that the topological spaces
  $Y$ and $\wtilde X$ admit CW-structures without odd-dimensional cells. Using
  these CW-structures to compute (co)homology, cf.~\cite[Sect.~2.2]{Hatcher}, it
  follows that
  \begin{equation}\label{eq:thunfisch}
    H^k \bigl( Y,\, \bC \bigr) = H^k \bigl( \wtilde X,\, \bC \bigr) = 0 \quad
    \text{if $k$ is odd,}
  \end{equation}
  In fact, more is true. Property~(\ref{rem:ex2}.\ref{il:pollak}) implies that
  all topological cohomology groups $H^{2k}\bigl( Y,\, \bC \bigr)$ are generated
  by cohomology classes of smooth algebraic cycles, that is, by elements of
  $H^{k,k}\bigl(Y\bigr)$, cf.~\cite[Rem.~(iii) on p.~140]{Hatcher} and
  \cite[Prop.~11.20]{Voisin-Hodge1}. The same being true for $\wtilde X$, we
  find that
  \begin{equation}\label{eq:skrej}
    H^{k,l} \bigl( Y \bigr) = 0 \text{\,\, and\,\, }
    H^{k,l} \bigl( \wtilde X \bigr) = 0 \quad \text{if $k \not = l$.}
\end{equation}
\end{rem}

\begin{lem}\label{lem:A}
  In the setting of Example~\ref{ex:KANVisFalse}, we have
  \begin{enumerate-p}
  \item\label{il:matjes} $H^1 \bigl( Y,\, \Omega^2_Y \otimes \sO_Y(1) \bigr)
    \not = 0$, and
  \item\label{il:sardelle} $H^2 \bigl( Y,\, \Omega^2_Y \otimes \sO_Y(\nu)
    \bigr) = 0$ for all $\nu > 0$.
  \end{enumerate-p}
\end{lem}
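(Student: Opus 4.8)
The plan is to prove the two statements separately, exploiting the $\bP^1$-bundle structure $\psi \colon Y = \bP_{\bP^2}(\sT_{\bP^2}) \to \bP^2$ fixed in Example~\ref{ex:KANVisFalse}.

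The vanishing $H^2\bigl(Y,\, \Omega^2_Y \otimes \sO_Y(\nu)\bigr) = 0$ for all $\nu > 0$ is immediate: $Y$ is a smooth projective threefold, and since $\sO_Y(1)$ is (very) ample, the line bundle $\sO_Y(\nu)$ is ample for every $\nu > 0$. Applying the classical Kodaira--Akizuki--Nakano Vanishing Theorem~\ref{thm:KANvanishing} with $p = q = 2$, so that $p + q = 4 > 3 = \dim Y$, yields the claim at once.

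For the non-vanishing $H^1\bigl(Y,\, \Omega^2_Y \otimes \sO_Y(1)\bigr) \neq 0$ I would compute the direct images $\psi_*\bigl(\Omega^2_Y \otimes \sO_Y(1)\bigr)$ and $R^1\psi_*\bigl(\Omega^2_Y \otimes \sO_Y(1)\bigr)$ and then invoke the Leray spectral sequence. The relative Euler sequence identifies the relative cotangent bundle as $\Omega^1_{Y/\bP^2} \cong \psi^*\sO_{\bP^2}(3) \otimes \sO_Y(-2)$, a normalisation consistent with the formula $\omega_Y \cong \sO_Y(-2)$ recorded in Remark~\ref{rem:ex1}. Taking $\wedge^2$ of the relative cotangent sequence $0 \to \psi^*\Omega^1_{\bP^2} \to \Omega^1_Y \to \Omega^1_{Y/\bP^2} \to 0$ and twisting by $\sO_Y(1)$ produces a short exact sequence
$$
0 \to \psi^*\sO_{\bP^2}(-3) \otimes \sO_Y(1) \to \Omega^2_Y \otimes \sO_Y(1) \to \psi^*\sT_{\bP^2} \otimes \sO_Y(-1) \to 0,
$$
where I have used $\wedge^2 \Omega^1_{\bP^2} \cong \sO_{\bP^2}(-3)$ and $\Omega^1_{\bP^2}(3) \cong \sT_{\bP^2}$. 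Since $\psi$ is a $\bP^1$-bundle with $\psi_*\sO_Y(m) = \Sym^m \sT_{\bP^2}$ for $m \geq 0$ (Grothendieck's convention, as in Example~\ref{ex:KANVisFalse}) and $\psi_*\sO_Y(-1) = R^1\psi_*\sO_Y(-1) = R^1\psi_*\sO_Y(1) = 0$, pushing this sequence forward gives $R^1\psi_*\bigl(\Omega^2_Y \otimes \sO_Y(1)\bigr) = 0$ together with an isomorphism
$$
\psi_*\bigl(\Omega^2_Y \otimes \sO_Y(1)\bigr) \ \cong\ \sO_{\bP^2}(-3) \otimes \sT_{\bP^2} \ \cong\ \Omega^1_{\bP^2}.
$$
The Leray spectral sequence then degenerates and identifies $H^1\bigl(Y,\, \Omega^2_Y \otimes \sO_Y(1)\bigr)$ with $H^1\bigl(\bP^2,\, \Omega^1_{\bP^2}\bigr)$, which is one-dimensional (for instance by Bott's formula, or because it equals $H^{1,1}(\bP^2)$), hence non-zero.

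I do not anticipate a real obstacle: both halves reduce to the standard exact sequences attached to a projectivised bundle together with Bott-type facts on $\bP^2$, and the only point demanding attention is keeping the twists in the relative Euler sequence straight — which is why I would cross-check the computation of $\Omega^1_{Y/\bP^2}$ against the known canonical bundle $\omega_Y \cong \sO_Y(-2)$ before pushing forward. An equivalent route, if preferred, is to first rewrite $\Omega^2_Y \otimes \sO_Y(1) \cong \sT_Y \otimes \omega_Y \otimes \sO_Y(1) \cong \sT_Y \otimes \sO_Y(-1)$ and run the same push-forward argument starting from the relative tangent sequence instead.
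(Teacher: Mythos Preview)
Your argument is correct. One small bookkeeping point first: in Example~\ref{ex:KANVisFalse} the bundle map $Y=\bP_{\bP^2}(\sT_{\bP^2})\to\bP^2$ is called $\phi$, not $\psi$; the symbol $\psi$ denotes the other $\bP^1$-bundle $\wtilde X\to Y$. With that relabelling everything goes through.

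For (\ref{lem:A}.\ref{il:matjes}) your route and the paper's are essentially the same. Both take $\wedge^2$ of the relative cotangent sequence of $\phi$, twist by $\sO_Y(1)$, and then reduce to $H^1(\bP^2,\Omega^1_{\bP^2})\cong\bC$. The paper phrases this as computing the cohomology of the two end terms $\sA_1$ and $\sB_1$ separately and plugging into the long exact sequence, whereas you push the whole short exact sequence forward to $\bP^2$ first and then apply Leray; the underlying identifications $\phi_*\sO_Y(1)=\sT_{\bP^2}$, $\phi_*\sO_Y(-1)=R^1\phi_*\sO_Y(-1)=0$ and $\sT_{\bP^2}\otimes\omega_{\bP^2}\cong\Omega^1_{\bP^2}$ are identical.

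For (\ref{lem:A}.\ref{il:sardelle}) you take a genuinely shorter path. The paper proves the vanishing by an explicit computation: it pushes the twisted $\wedge^2$ sequence forward, identifies $H^2(Y,\sA_\nu)\cong H^0(\bP^2,\Sym^\nu\Omega^1_{\bP^2})^*=0$ and $H^2(Y,\sB_\nu)\cong H^0(\bP^2,\sT_{\bP^2}\otimes\Sym^{\nu-2}\Omega^1_{\bP^2}\otimes\sO_{\bP^2}(-6))^*=0$ via Serre duality, and concludes. Your observation that this case is literally an instance of classical Kodaira--Akizuki--Nakano on the smooth threefold $Y$ (with $p=q=2>3=\dim Y$) dispatches it in one line. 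The paper's longer route has the minor advantage of being self-contained relative to the sequence already set up, but your invocation of Theorem~\ref{thm:KANvanishing} is both legitimate and cleaner.
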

\begin{proof}
  Let $\nu>0$ be any number.  We consider the sequence of relative differentials
  for the smooth morphism $\phi$,
  $$
  0 \to \phi^* \Omega^1_{\bP^2} \to \Omega^1_Y \to
  \underbrace{\Omega^1_{Y/\bP^2}}_{= \omega_{Y/\bP^2}} \to 0.
  $$
  Twisting the second exterior power of this sequence with $\sO_Y(\nu)$, one
  obtains
  \begin{equation}\label{eq:anchovi}
    0 \to \underbrace{\sO_Y(\nu) \otimes \phi^* \omega_{\bP^2}}_{ =: \sA_\nu}
    \to \sO_Y(\nu) \otimes \Omega^2_Y \to \underbrace{\phi^* \Omega^1_{\bP^2}
      \otimes \sO_Y(\nu) \otimes \omega_{Y/\bP^2}}_{=: \sB\nu} \to 0.
  \end{equation}
  Using Equation~\eqref{eq:KY} of Remark~\ref{rem:ex1} to expand the definition
  of $\omega_{Y/\bP^2}$, we obtain that
  \begin{equation}\label{eq:idBnu}
    \sB_\nu = \phi^* \big(\Omega^1_{\bP^2}\otimes \phi^* \omega_{\bP^2}^{-1}
    \bigr) \otimes \sO_Y(\nu-2).
  \end{equation}
  The following observations are crucial for cohomology computations.
  \begin{enumerate-p}
    \setcounter{enumi}{\value{equation}}
  \item\label{il:macarel} If $F$ is any fibre of $\phi$, then $F \cong \bP^1$,
    the restriction $\sA_\nu|_F$ is ample, and $H^1 \bigl( F,\, \sA_\nu|_F
    \bigr) = 0$ for all $\nu > 0$.  The Leray spectral sequence thus gives $H^i
    \bigl( Y,\, \sA_\nu \bigr) = H^i \bigl( \bP^2 ,\, \phi_* \sA_\nu \bigr)$ for
    all positive $\nu$ and $i$.

  \item\label{il:cod} Likewise, the Leray spectral sequence gives $H^i \bigl(
    Y,\, \sB_\nu \bigr) = H^i \bigl( \bP^2 ,\, \phi_* \sB_\nu \bigr)$ for all
    positive $i, \nu$.

  \item\label{il:tuna} The negativity of $\sB_1$ on $\phi$-fibres implies that
    $\phi_* \sB_1 = 0$. In particular, we obtain that $H^i\bigl( Y,\, \sB_1
    \bigr) = 0$ for all $i \geq 0$. 
  \end{enumerate-p}
  We compute cohomology groups first in case $\nu = 1$,
  \begin{align*}
    H^1\bigl( Y,\, \sA_1 \bigr) &= H^1\bigl( \bP^2,\, \phi_* \sA_1 \bigr) &&
    \text{by (\ref{lem:A}.\ref{il:macarel}}) \\
    &= H^1\bigl( \bP^2,\, \sT_{\bP^2} \otimes \omega_{\bP^2} \bigr) &&
    \text{since $Y = \bP_{\bP^2}(\sT_{\bP^2})$, so $\phi_* \sO_Y(1) =  \sT_{\bP^2}$}\\
    &\cong H^1\bigl( \bP^2,\, \Omega^1_{\bP^2} \bigr)^* \cong \bC && \text{Serre duality.} \\
    H^2\bigl( Y,\, \sA_1 \bigr) &\cong H^0\bigl( \bP^2,\, \Omega^1_{\bP^2} \bigr)^* = 0.  && \text{Analogous computation.}
  \end{align*}
  The long exact cohomology sequence for \eqref{eq:anchovi} in case $\nu = 1$,
  $$
    \cdots \to \underbrace{H^0\bigl( Y,\, \sB_1 \bigr)}_{= 0 \text{ by
        (\ref{lem:A}.\ref{il:tuna})}} \to \underbrace{H^1\bigl( Y,\, \sA_1
      \bigr)}_{\cong \bC} \to H^1 \bigl( Y,\, \sO_Y(1) \otimes \Omega^2_Y \bigr)
    \to \cdots, 
    $$
  then shows Claim~(\ref{lem:A}.\ref{il:matjes}). Another excerpt of the same sequence,
  $$
  \cdots \to \underbrace{H^2\bigl( Y,\, \sA_1 \bigr)}_{= 0} \to H^2\bigl( Y,\,
  \sO_Y(1) \otimes \Omega^2_Y \bigr) \to \underbrace{H^2 \bigl( Y,\, \sB_1
    \bigr)}_{= 0 \text{ by (\ref{lem:A}.\ref{il:tuna})}} \to \cdots,
  $$
  already shows Claim~(\ref{lem:A}.\ref{il:sardelle}) in case $\nu = 1$. 
  
  It remains to show Claim~(\ref{lem:A}.\ref{il:sardelle}) for $\nu > 1$. We
  identify the following cohomology groups.
  \begin{align*}
    H^2 \bigl( Y,\, \sA_\nu \bigr) &= H^2 \bigl( \bP^2,\, \phi_* \sA_{\nu}
    \bigr) && \text{by (\ref{lem:A}.\ref{il:macarel})} \\
    & = H^2 \bigl( \bP^2,\, \Sym^\nu \sT_{\bP^2} \otimes \omega_{\bP^2} \bigr)
    && \text{since $Y = \bP_{\bP^2}(\sT_{\bP^2})$} \\
    & \cong H^0 \bigl( \bP^2,\, \Sym^\nu \Omega^1_{\bP^2} \bigr)^* && \text{Serre duality} \\
    & = 0 && \text{since $\Omega^1_{\bP^2}$ is anti-ample} \\
    H^2 \bigl( Y,\, \sB_\nu \bigr) &= H^2 \bigl( \bP^2,\, \phi_* \sB_{\nu}
    \bigr) && \text{by (\ref{lem:A}.\ref{il:cod})} \\
    & = H^2 \bigl( \bP^2,\, \Omega^1_{\bP^2} \otimes \omega^{-1}_{\bP^2}
    \otimes \Sym^{\nu-2} \sT_{\bP^2} \bigr) && \text{by \eqref{eq:idBnu}} \\
    & \cong H^0 \bigl( \bP^2,\, \underbrace{\sT_{\bP^2} \otimes \Sym^{\nu-2} \Omega^1_{\bP^2} \otimes \sO_{\bP^2}(-6)}_{=: \sC_\nu} \bigr)^* && \text{by Serre duality} \\
    & = 0
  \end{align*}
  The last equality holds because the restriction $\sC_{\nu}$ to any line $l $ is
  obviously negative. The long exact cohomology sequence for \eqref{eq:anchovi}
  will then immediately give Claim~(\ref{lem:A}.\ref{il:sardelle}) for $\nu >
  1$, as required.
\end{proof}

\begin{cor}\label{cor:B}
  In the setting of Example~\ref{ex:KANVisFalse}, we have $H^1 \bigl( E,\,
  \Omega^3_{\wtilde X}\bigl|_E \bigr) \not = 0$.
\end{cor}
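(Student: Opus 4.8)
The plan is to restrict $\Omega^3_{\wtilde X}$ to the divisor $E$, which is canonically isomorphic to $Y$, and to extract the desired non-vanishing from Lemma~\ref{lem:A} by means of the conormal sequence.

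First I would write down the conormal sequence of the smooth divisor $E \subset \wtilde X$,
$$
0 \to N^*_{E/\wtilde X} \to \Omega^1_{\wtilde X}\bigl|_E \to \Omega^1_E \to 0,
$$
recalling from Example~\ref{ex:KANVisFalse} that $N_{E/\wtilde X} \cong \sO_Y(-1)$ and that $E$ is naturally isomorphic to $Y$; hence the conormal bundle equals $\sO_Y(1)$ and $\Omega^1_E \cong \Omega^1_Y$. Since the conormal bundle has rank one, the natural filtration on the third exterior power of the middle term collapses to a single short exact sequence
$$
0 \to \sO_Y(1) \otimes \Omega^2_Y \to \Omega^3_{\wtilde X}\bigl|_E \to \Omega^3_Y \to 0,
$$
and Equation~\eqref{eq:KY} of Remark~\ref{rem:ex1} gives $\Omega^3_Y = \omega_Y \cong \sO_Y(-2)$.

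Next I would pass to the long exact cohomology sequence on $E \cong Y$ and isolate the portion
$$
H^0\bigl(Y,\, \sO_Y(-2)\bigr) \to H^1\bigl(Y,\, \sO_Y(1) \otimes \Omega^2_Y\bigr) \to H^1\bigl(E,\, \Omega^3_{\wtilde X}\bigl|_E\bigr).
$$
Because $\sO_Y(1)$ is (very) ample, the line bundle $\sO_Y(-2)$ has no non-zero sections, so the leftmost group vanishes and the middle arrow is injective. Part~(\ref{lem:A}.\ref{il:matjes}) of Lemma~\ref{lem:A} asserts that $H^1\bigl(Y,\, \sO_Y(1) \otimes \Omega^2_Y\bigr) \neq 0$, and injectivity then yields $H^1\bigl(E,\, \Omega^3_{\wtilde X}\bigl|_E\bigr) \neq 0$, which is the assertion of the corollary.

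I do not expect any genuine obstacle. The one step that deserves care is the identification of $\Omega^3_{\wtilde X}\bigl|_E$ through the exterior-power filtration attached to the conormal sequence; once that is settled, the conclusion follows at once from the elementary vanishing $H^0(Y, \sO_Y(-2)) = 0$ together with the cohomology computation already performed in the proof of Lemma~\ref{lem:A}.
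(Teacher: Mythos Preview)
Your proof is correct and follows essentially the same route as the paper: both restrict $\Omega^3_{\wtilde X}$ to $E\cong Y$ via the conormal sequence and invoke Lemma~\ref{lem:A}(\ref{il:matjes}). The only difference is that the paper observes the conormal sequence actually \emph{splits} (because $E$ is a section of the $\bP^1$-bundle $\psi:\wtilde X\to Y$), so $\Omega^3_{\wtilde X}|_E \cong \Omega^3_Y \oplus \bigl(\Omega^2_Y\otimes\sO_Y(1)\bigr)$ and the nonvanishing is read off directly without your extra step $H^0(Y,\sO_Y(-2))=0$.
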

\begin{proof}
  The divisor $E \cong Y$ being a section of $\psi: \wtilde X \to Y$, we see
  that the normal bundle sequence of $E$ splits, that is, $\Omega^1_{\wtilde
    X}\bigl|_E = \Omega^1_E \oplus N^*_{E/\wtilde X} \cong \Omega^1_Y \oplus
  \sO_Y(1)$. Taking cohomology of the third wedge-power, we obtain that
  $$
  H^1 \bigl( E,\, \Omega^3_{\wtilde X}\bigl|_E \bigr) \cong H^1
    \bigl( Y,\, \Omega^3_Y \bigr) \oplus \underbrace{H^1\bigl( Y,\, \Omega^2_Y \otimes
    \sO_Y(1) \bigr)}_{\mathclap{\not = 0 \text{ by Assertion~(\ref{lem:A}.\ref{il:matjes})
      of Lemma~\ref{lem:A}}}}.
  $$
\end{proof}

\begin{cor}\label{cor:C}
  In the setting of Example~\ref{ex:KANVisFalse}, let $\what E$ be the formal
  completion of the $\pi$-exceptional divisor $E$ in $\wtilde X$. Then $H^1
  \bigl( \what E,\, \Omega^3_{\wtilde X}\bigl|_{\what E} \bigr) \not = 0$.
\end{cor}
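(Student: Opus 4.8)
The plan is to pass from $E$ to its infinitesimal neighbourhoods, control the relevant cohomology via the conormal filtration, and transport the non-vanishing back to $\what E$ by the theorem on formal functions. Concretely, write $\sI := \sO_{\wtilde X}(-E)$ for the ideal sheaf of $E$ and, for $k \geq 0$, let $E_k \subseteq \wtilde X$ be the closed subscheme defined by $\sI^{k+1}$, so that $E_0 = E$ and $\Omega^3_{\wtilde X}|_{\what E} = \varprojlim_k \Omega^3_{\wtilde X}|_{E_k}$. By the theorem on formal functions \cite[Thm.~III.11.1]{Ha77} applied to the projective morphism $\pi$ and the fibre $E = \pi^{-1}(x)$ (together with the comparison theorem for cohomology on the formal completion), there is a surjection $H^1(\what E,\,\Omega^3_{\wtilde X}|_{\what E}) \twoheadrightarrow \varprojlim_k H^1(E_k,\,\Omega^3_{\wtilde X}|_{E_k})$; in fact both groups are naturally identified with the stalk at $x$ of the coherent sheaf $R^1\pi_*\Omega^3_{\wtilde X}$. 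It therefore suffices to prove that this inverse limit is non-zero.

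Since $E$ is a smooth Cartier divisor, $\sI^k/\sI^{k+1} \cong (N^*_{E/\wtilde X})^{\otimes k}$, and $N_{E/\wtilde X} \cong \sO_Y(-1)$ by Remark~\ref{rem:ex1}, so $\sI^k/\sI^{k+1} \cong \sO_Y(k)$. Tensoring the structure sequence $0 \to \sI^k/\sI^{k+1} \to \sO_{E_k} \to \sO_{E_{k-1}} \to 0$ with the locally free sheaf $\Omega^3_{\wtilde X}$ produces, for each $k \geq 1$, a short exact sequence
$$
0 \to \Omega^3_{\wtilde X}|_E \otimes \sO_Y(k) \to \Omega^3_{\wtilde X}|_{E_k} \to \Omega^3_{\wtilde X}|_{E_{k-1}} \to 0 .
$$
Using the splitting $\Omega^3_{\wtilde X}|_E \cong \omega_Y \oplus (\Omega^2_Y \otimes \sO_Y(1)) \cong \sO_Y(-2) \oplus (\Omega^2_Y \otimes \sO_Y(1))$ established in the proof of Corollary~\ref{cor:B}, the key step will be to show that
$$
H^2\bigl(Y,\,\Omega^3_{\wtilde X}|_E \otimes \sO_Y(k)\bigr) = H^2\bigl(Y,\,\sO_Y(k-2)\bigr) \oplus H^2\bigl(Y,\,\Omega^2_Y \otimes \sO_Y(k+1)\bigr) = 0 \quad\text{for all }k \geq 1 .
$$
The second summand vanishes by Assertion~(\ref{lem:A}.\ref{il:sardelle}) of Lemma~\ref{lem:A}, since $k+1 > 0$. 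For the first summand I would argue along the $\bP^1$-bundle $\phi\colon Y \to \bP^2$: for $k = 1$ all higher direct images of $\sO_Y(-1)$ vanish along $\phi$, whence $H^2(Y,\sO_Y(-1)) = 0$; for $k \geq 2$ one has $R^i\phi_*\sO_Y(k-2) = 0$ for $i>0$ and $\phi_*\sO_Y(k-2) = \Sym^{k-2}\sT_{\bP^2}$, so Serre duality on $\bP^2$ reduces the vanishing to $H^0\bigl(\bP^2,\,\Sym^{k-2}\Omega^1_{\bP^2} \otimes \sO_{\bP^2}(-3)\bigr) = 0$, which holds because $\Sym^{k-2}\Omega^1_{\bP^2}$ embeds into a direct sum of copies of $\sO_{\bP^2}(-(k-2))$.

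Feeding these $H^2$-vanishings into the long exact cohomology sequences of the short exact sequences above shows that every transition map $H^1(E_k,\,\Omega^3_{\wtilde X}|_{E_k}) \to H^1(E_{k-1},\,\Omega^3_{\wtilde X}|_{E_{k-1}})$ is surjective; in particular ${\varprojlim}^1$ of the system vanishes and the natural map $\varprojlim_k H^1(E_k,\,\Omega^3_{\wtilde X}|_{E_k}) \to H^1(E,\,\Omega^3_{\wtilde X}|_E)$ is surjective. By Corollary~\ref{cor:B} the target is non-zero, hence so is the inverse limit, and therefore $H^1(\what E,\,\Omega^3_{\wtilde X}|_{\what E}) \neq 0$, as claimed. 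The main obstacle here is not a single computation — the cohomological input is essentially Lemma~\ref{lem:A}, with only a routine line-bundle calculation on $Y$ added — but the bookkeeping of the limit: one must check that the theorem on formal functions genuinely applies in this formal-scheme setting, and, crucially, that surjectivity of the one-step transition maps (precisely what the $H^2$-vanishing of the graded pieces buys) forces the inverse limit, and hence $H^1(\what E,\,\cdot)$, to be non-zero.
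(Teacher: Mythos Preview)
Your proposal is correct and follows essentially the same route as the paper: pass to the inverse limit over infinitesimal thickenings via the theorem on formal functions, show that the transition maps $H^1(E_k,\cdot) \to H^1(E_{k-1},\cdot)$ are surjective by proving that $H^2$ of the graded pieces vanishes, and invoke Corollary~\ref{cor:B} at the bottom. You are in fact slightly more careful than the paper about the line-bundle summand $H^2\bigl(Y,\sO_Y(k-2)\bigr)$ (the paper cites only Lemma~\ref{lem:A}, which strictly speaking handles the $\Omega^2_Y$-summand); note that this piece can also be dispatched in one line by Kodaira vanishing, since $\sO_Y(k-2) \cong \omega_Y \otimes \sO_Y(k)$ with $\sO_Y(k)$ ample for $k \geq 1$.
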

\begin{proof}
  We follow the standard approach and compute cohomology on $\what E$ as an
  inverse limit of cohomology on higher-order infinitesimal neighborhoods of $E$.
  Denoting by $E_n \subset \wtilde X$ the subscheme defined by $\sJ_E^{n+1}$,
  recall from \cite[Prop.~4.1]{Ha68} that the cohomology on the formal
  completion is computed as
  $$
  H^1 \bigl( \what E,\, \Omega^3_{\wtilde X}\bigl|_{\what E} \bigr) =
  \lim_{\leftarrow} H^1\bigl( E_n,\, \Omega^3_{\wtilde X}\bigl|_{E_n} \bigr),
  $$
  where the limit is taken over the inverse system given by restriction maps
  $$
  \cdots \xrightarrow{r_3} H^1\bigl( E_3,\, \Omega^3_{\wtilde X}\bigl|_{E_3}
  \bigr) \xrightarrow{r_2} H^1\bigl( E_2,\, \Omega^3_{\wtilde X}\bigl|_{E_2}
  \bigr) \xrightarrow{r_1} \underbrace{H^1\bigl( E,\, \Omega^3_{\wtilde
      X}\bigl|_E \bigr)}_{\not = 0 \text{ by Corollary~\ref{cor:B}}}.
  $$
  Corollary~\ref{cor:C} will thus follow once we show that all restriction
  morphisms $r_i$ are surjective. To this end, fix a number $n>1$ and consider
  the sequence of coherent $\sO_{\wtilde X}$-modules,
  $$
  0 \to \underbrace{\factor \sJ^n_E.\sJ^{n+1}_E.}_{\cong ( N_{E/\wtilde
      X}^*)^{\otimes n}} \to \underbrace{\factor \sO_{\wtilde
      X}.\sJ^{n+1}_E.}_{\cong \sO_{E_n}} \to \underbrace{\factor \sO_{\wtilde
      X}.\sJ^n_E.}_{\cong \sO_{E_{n-1}}} \to 0.
  $$
  Tensoring over $\sO_{\wtilde X}$ with $\Omega^3_{\wtilde X}$, the associated
  long exact cohomology sequence reads
  $$
  H^1\bigl( E_n,\, \Omega^3_{\wtilde X}\bigl|_{E_n} \bigr) \xrightarrow{r_{n-1}}
  H^1\bigl( E_{n-1},\, \Omega^3_{\wtilde X}\bigl|_{E_{n-1}} \bigr) \to
  \underbrace{H^2\bigl( E,\, \Omega^3_{\wtilde X}\bigl|_E \otimes \sO_E(n)
    \bigr)}_{\mathclap{= 0 \text{ by Assertion~\ref{lem:A}.\ref{il:sardelle} of
        Lemma~\ref{lem:A}}}},
  $$
  giving the surjectivity we needed to show.
\end{proof}

\begin{lem}\label{lem:D}
  In the setting of Example~\ref{ex:KANVisFalse}, we have $H^1 \bigl( \wtilde
  X,\, \Omega^3_{\wtilde X} \otimes \sL \bigr) = 0$.
\end{lem}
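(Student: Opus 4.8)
The proof is a cohomology computation on the $\bP^1$-bundle $\psi\colon\wtilde X\to Y$. Write $\sE:=\sO_Y(1)\oplus\sO_Y$, so that $\wtilde X=\bP_Y(\sE)$, and let $\xi$ denote the relative hyperplane bundle $\sO_{\bP_Y(\sE)}(1)$; thus $\psi_*\xi=\sE$, $R^1\psi_*\xi=0$, $R^\bullet\psi_*\xi^{-1}=0$, and $\omega_{\wtilde X/Y}\cong\xi^{-2}\otimes\psi^*\det\sE=\xi^{-2}\otimes\psi^*\sO_Y(1)$. As a first step I would identify the line bundle in the statement, showing $\pi^*\sL\cong\xi$: indeed $\pi^*\sL$ restricts to $\sO_{\bP^1}(1)$ on every ruling of $\psi$ (a line through the vertex $x$) and restricts trivially to $E$ since $\pi$ contracts $E$ to a point; as $\Pic\wtilde X=\bZ\xi\oplus\psi^*\Pic Y$ and $\xi$ likewise restricts trivially to the section $E$, this pins down $\pi^*\sL\cong\xi$.

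Next I would run the relative cotangent sequence $0\to\psi^*\Omega^1_Y\to\Omega^1_{\wtilde X}\to\omega_{\wtilde X/Y}\to0$ of the smooth morphism $\psi$ through its third exterior power. Since $\omega_{\wtilde X/Y}$ has rank one and $\dim Y=3$, this produces the short exact sequence
$$0\to\psi^*\omega_Y\to\Omega^3_{\wtilde X}\to\psi^*\Omega^2_Y\otimes\omega_{\wtilde X/Y}\to0.$$
Tensoring with $\xi\cong\pi^*\sL$ and passing to the long exact cohomology sequence, it suffices to prove $H^1\bigl(\wtilde X,\,\psi^*\omega_Y\otimes\xi\bigr)=0$ and $H^1\bigl(\wtilde X,\,\psi^*\Omega^2_Y\otimes\omega_{\wtilde X/Y}\otimes\xi\bigr)=0$.

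For the quotient term, the formula for $\omega_{\wtilde X/Y}$ gives $\psi^*\Omega^2_Y\otimes\omega_{\wtilde X/Y}\otimes\xi\cong\psi^*\bigl(\Omega^2_Y\otimes\sO_Y(1)\bigr)\otimes\xi^{-1}$; since $R^\bullet\psi_*\xi^{-1}=0$, the projection formula and the Leray spectral sequence show this sheaf is acyclic, so in particular its $H^1$ vanishes. For the sub-sheaf term, Remark~\ref{rem:ex1} gives $\omega_Y\cong\sO_Y(-2)$, hence $R^\bullet\psi_*\bigl(\psi^*\omega_Y\otimes\xi\bigr)=\sO_Y(-2)\otimes\sE=\sO_Y(-1)\oplus\sO_Y(-2)$, concentrated in degree zero, and so $H^1\bigl(\wtilde X,\,\psi^*\omega_Y\otimes\xi\bigr)\cong H^1\bigl(Y,\,\sO_Y(-1)\bigr)\oplus H^1\bigl(Y,\,\sO_Y(-2)\bigr)$. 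Both summands are zero: along the second bundle map $\phi\colon Y=\bP_{\bP^2}(\sT_{\bP^2})\to\bP^2$ the bundle $\sO_Y(-1)$ restricts to $\sO_{\bP^1}(-1)$ on every fibre, hence is acyclic, while $\phi_*\sO_Y(-2)=0$ and $R^1\phi_*\sO_Y(-2)\cong\sO_{\bP^2}(-3)$, whose cohomology is concentrated in degree two; thus $H^1(Y,\sO_Y(-2))=0$ as well. (Equivalently, Serre duality on $Y$ reduces these to $H^2(Y,\sO_Y)=0=H^2(Y,\sO_Y(-1))$.) Feeding the two vanishings into the long exact sequence yields $H^1\bigl(\wtilde X,\,\Omega^3_{\wtilde X}\otimes\sL\bigr)=0$.

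I do not expect a real difficulty; the argument is pure bookkeeping. The one place that needs care is keeping the twists straight — in particular the identifications $\pi^*\sL\cong\xi$ and $\omega_{\wtilde X/Y}\cong\xi^{-2}\otimes\psi^*\sO_Y(1)$ — after which both pieces of the exterior-power filtration collapse to cohomology of explicit (possibly twisted) line bundles whose vanishing is forced by the two iterated $\bP^1$-bundle structures $\wtilde X\to Y\to\bP^2$.
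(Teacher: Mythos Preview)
Your proof is correct and takes a genuinely different route from the paper's.

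The paper argues via the ideal sheaf sequence of a general section $\Sigma\in|\pi^*\sL|$: twisting $0\to\sO_{\wtilde X}(-\Sigma)\to\sO_{\wtilde X}\to\sO_\Sigma\to0$ by $\Omega^3_{\wtilde X}\otimes\pi^*\sL$ reduces the claim to the two vanishings $H^1(\wtilde X,\Omega^3_{\wtilde X})=0$ and $H^1\bigl(\Sigma,(\Omega^3_{\wtilde X}\otimes\pi^*\sL)|_\Sigma\bigr)=0$. The first is the Hodge number $h^{1,3}(\wtilde X)=0$, established earlier via the CW-structure of $\wtilde X$ (Remark~\ref{rem:ex2}); for the second one splits $\Omega^1_{\wtilde X}|_\Sigma\cong\Omega^1_\Sigma\oplus N^*_{\Sigma/\wtilde X}$ and invokes Kodaira vanishing on $Y$ together with $h^{1,2}(Y)=0$. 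By contrast you filter $\Omega^3_{\wtilde X}$ through the relative cotangent sequence of $\psi$ and push the two graded pieces down to $Y$ via Leray and the projection formula, landing on explicit line-bundle cohomology on $Y$ that is then killed by the second $\bP^1$-bundle $\phi$. Your approach is entirely self-contained and avoids the Hodge-theoretic input from Remark~\ref{rem:ex2}; the paper's approach is a bit more geometric but leans on that earlier remark. Both are equally clean once the identification $\pi^*\sL\cong\xi$ is in hand, which you justify correctly.
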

\begin{proof}
  Let $\Sigma \in | \pi^* \sL|$ be a general element. The divisor $\Sigma
  \subset \wtilde X$ is then a section for the map $\psi: \wtilde X \to Y$. In
  particular, $\Sigma \cong Y$ and $N_{\Sigma/\wtilde X} \cong \sL|_{\Sigma}
  \cong \sO_Y(1)$. Twisting the ideal sheaf sequence for $\Sigma \in \wtilde X$
  with $\Omega^3_{\wtilde X} \otimes \sL$, we obtain
  \begin{equation}\label{eq:dorade}
    0 \to \Omega^3_{\wtilde X} \to \Omega^3_{\wtilde X} \otimes \sL \to \bigl(
    \Omega^3_{\wtilde X} \otimes \sL\bigr)\bigl|_{\Sigma} \to 0.
  \end{equation}
  Using the long exact cohomology sequence of~\eqref{eq:dorade},
  Lemma~\ref{lem:D} follows once we show that
  \begin{align}
    \label{van:whale} & H^1\bigl(\wtilde X,\, \Omega^3_{\wtilde X}\bigr) =
    H^{1,3}\bigl( \wtilde X \bigr) = 0
    &&\text{and} \\
    \label{van:shark} & H^1\bigl(\Sigma,\, \Omega^3_{\wtilde X} \otimes
    \sL|_\Sigma \bigr) = 0.
  \end{align}
  Vanishing~\eqref{van:whale} has already been shown in Remark~\ref{rem:ex2}
  above. For~\eqref{van:shark}, use the mapping $\psi$ to write
  $\Omega^1_{\wtilde X}\bigl|_\Sigma \cong \Omega^1_\Sigma \oplus N_{\Sigma/\wtilde
    X}^*$ so that
  $$
  \bigl( \Omega^3_{\wtilde X} \otimes \sL\bigr) \bigl|_\Sigma \cong \bigl(
  \omega_\Sigma \otimes \sL|_\Sigma\bigr) \, \oplus \, \bigl(
  \Omega^2_{\Sigma} \otimes N_{\Sigma/\wtilde X}^* \otimes \sL|_\Sigma \bigr)
  \cong \bigl( \omega_\Sigma \otimes \sL|_\Sigma \bigr) \, \oplus \,
  \Omega^2_{\Sigma}
  $$
  and
  $$
  H^1\bigl( \Sigma,\, (\Omega^3_{\wtilde X} \otimes \sL)|_\Sigma \bigr) \cong
  \underbrace{H^1\bigl( Y,\, \omega_Y \otimes \sO_Y(1) \bigr)}_{=0\text{ by
      Kodaira vanishing}} \quad \oplus \quad \underbrace{H^1\bigl( Y,\,
    \Omega^2_Y \bigr)}_{\mathclap{= H^{1,2}(Y) = 0 \text{ by
        Remark~\ref{rem:ex2}}}}.
  $$
  This finishes the proof of Lemma~\ref{lem:D}.
\end{proof}

\begin{proof}[Proof of Proposition~\ref*{prop:fail2}]\label{pf:prop:fail2}
  Consider the following excerpt of the 5-term exact sequence associated with
  the Leray spectral sequence for the sheaf $\Omega^3_{\wtilde X} \otimes \pi^*
  \sL$,
  $$
  \cdots \to %
  \underbrace{H^1 \bigl( \wtilde X,\, \Omega^3_{\wtilde X} \otimes \pi^* \sL
    \bigr)}_{= 0 \text{ by Lemma~\ref{lem:D}}} \to %
  \underbrace{H^0 \bigl( X,\, R^1\pi_* \Omega^3_{\wtilde X} \otimes \sL
    \bigr)}_{=E^{0,1}_2} \to %
  H^2 \bigl( X,\, \underbrace{\pi_* \Omega^3_{\wtilde X} \otimes \sL}_{\mathclap{=
    \Omega^{[3]}_X \otimes \sL \text{ by the ext.~theorem}}} \bigr) \to %
  \cdots
  $$
  To show that $H^2 \bigl( X,\, \Omega^{[3]}_X \otimes \sL \bigr) \not = 0$, is
  suffices to show that $E^{0,1}_2 \not = 0$. Since the higher direct image
  sheaf vanishes outside of the singular point $x \in X$, this will follow from
  $R^1\pi_* \Omega^3_{\wtilde X} \not = 0$. Non-vanishing of $R^1\pi_*
  \Omega^3_{\wtilde X}$ follows from Corollary~\ref{cor:C} and from the theorem
  on formal functions, in the form of \cite[Chapt.~III,
  Cor.~4.1.7]{EGA3-1}. This finishes the proof of Proposition~\ref{prop:fail2}.
\end{proof}

\section{Closedness of reflexive forms, Poincaré's lemma}\label{sect:Poincare}

\subsection{Closedness of reflexive forms}

It is a standard result of Hodge theory that logarithmic differential forms on
projective snc pairs are closed. Here, we show that similar results also hold
for reflexive differentials, in the log canonical setting. The following
notation is fundamental in the discussion.

\begin{defn}[Closedness for reflexive forms]\label{def:closeness}
  Let $X$ be a normal algebraic variety (or complex space), $E$ a reduced effective divisor on $X$
  and $\sigma \in H^0\bigl(X, \Omega^{[p]}_X(\log E) \bigr)$ a reflexive
  $p$-form. We call $\sigma$ \emph{closed} if $d(\sigma \vert_{X_{\reg}
    \setminus E}) = 0$.
\end{defn}

While a Hodge theory for reflexive differentials is still missing, we prove the
following generalisation of the particular Hodge-theoretic result mentioned
above.

\begin{thm}[Closedness of global logarithmic forms]\label{thm:closedness}
  Let $X$ be a projective variety, and let $D$ be an effective $\bQ$-divisor such that the
  pair $(X, D )$ is log canonical. Then, any reflexive logarithmic $p$-form
  $\sigma \in H^0\bigl(X,\, \Omega^{[p]}_X(\log \lfloor D \rfloor) \bigr)$ is
  closed.
\end{thm}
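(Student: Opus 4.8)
The plan is to pass to a log resolution, use the logarithmic version of the Extension Theorem to realise $\sigma$ as an honest logarithmic form on a projective manifold, invoke the classical Hodge-theoretic closedness statement there, and finally descend closedness to $X$.

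\emph{Step 1 (reduction to a log resolution).} First I would choose a log resolution $\pi\colon \wtilde X\to X$ of the pair $(X,D)$ and set
$$
\wtilde D := \text{largest reduced divisor contained in } \supp\pi^{-1}\bigl(\nklt(X,D)\bigr),
$$
a divisor with simple normal crossing support on the smooth projective variety $\wtilde X$. Since $\lfloor D\rfloor\subseteq\nklt(X,D)$, the strict transform of $\lfloor D\rfloor$ is contained in $\wtilde D$. The logarithmic Extension Theorem \cite[Thm.~1.5]{GKKP11} gives $\pi_*\Omega^p_{\wtilde X}(\log\wtilde D)=\Omega^{[p]}_X(\log\lfloor D\rfloor)$, so that $\sigma$ is nothing but a global logarithmic form $\wtilde\sigma\in H^0\bigl(\wtilde X,\,\Omega^p_{\wtilde X}(\log\wtilde D)\bigr)$; over the dense open set $U\subseteq X$ on which $\pi$ is an isomorphism one has $\wtilde\sigma=(\pi|_{\pi^{-1}(U)})^*(\sigma|_U)$ under this identification.

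\emph{Step 2 (closedness on $\wtilde X$).} On the smooth projective variety $\wtilde X$ with its snc divisor $\wtilde D$, Deligne's mixed Hodge theory for the open variety $\wtilde X\setminus\wtilde D$ applies: the logarithmic de Rham complex $\bigl(\Omega^\bullet_{\wtilde X}(\log\wtilde D),d\bigr)$ computes $H^\bullet(\wtilde X\setminus\wtilde D,\bC)$, and the associated Hodge-to-de-Rham spectral sequence with $E_1$-terms $H^b\bigl(\wtilde X,\Omega^a_{\wtilde X}(\log\wtilde D)\bigr)$ degenerates at $E_1$. The $d_1$-differential along the bottom row $b=0$ is precisely the exterior derivative on global logarithmic forms, so $E_1$-degeneration forces $d\wtilde\sigma=0$.

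\emph{Step 3 (descent).} Since $d$ commutes with restriction along the isomorphism $\pi|_{\pi^{-1}(U)}$, Step 2 yields $d\bigl(\sigma|_{U\setminus\lfloor D\rfloor}\bigr)=0$. The locus $U\setminus\lfloor D\rfloor$ is dense and open in $X_{\reg}\setminus\lfloor D\rfloor$, where $\Omega^{p+1}$ is locally free; hence the regular $(p+1)$-form $d\sigma$, vanishing on a dense open subset, vanishes identically on $X_{\reg}\setminus\lfloor D\rfloor$. Thus $\sigma$ is closed in the sense of Definition~\ref{def:closeness} (applied with $E=\lfloor D\rfloor$). The one point requiring genuine care is Step 1: one must match the divisor $\wtilde D$ produced by the logarithmic Extension Theorem with the divisor $\lfloor D\rfloor$ along which $\sigma$ is allowed poles, so that $\wtilde\sigma$ really is a section of the logarithmic sheaf of the snc pair $(\wtilde X,\wtilde D)$ to which Deligne's theory can be applied; everything else is the standard Hodge-theoretic package plus a routine density/torsion-freeness argument, and the hypotheses ``projective'' and ``log canonical'' enter only through the availability of the Extension Theorem and of Hodge theory on the manifold $\wtilde X$.
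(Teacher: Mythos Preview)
Your proposal is correct and follows essentially the same line as the paper's proof: pass to a log resolution, invoke the Extension Theorem \cite[Thm.~1.5]{GKKP11} to lift $\sigma$ to a global logarithmic form $\wtilde\sigma$ on the snc pair $(\wtilde X,\wtilde D)$, apply Deligne's closedness result \cite[(3.2.14)]{DeligneHodgeII}, and descend. The only cosmetic difference is in Step~3: the paper observes directly that $\pi$ identifies $\wtilde X\setminus\bigl((\pi\text{-exceptional set})\cup\supp\wtilde D\bigr)$ with $X_{\reg}\setminus\supp\lfloor D\rfloor$, whereas you use a density/torsion-freeness argument---both are fine.
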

\begin{proof}
  Let $\pi: \wtilde X \to X$ be a strong log-resolution of the pair $(X, D)$ and
  set
  $$
  \wtilde D := \text{the largest reduced divisor contained in } \supp
  \pi^{-1}(\nklt (X,D)).
  $$
  Now, if $\sigma \in H^0\bigl(X,\, \Omega^{[p]}_X(\log \lfloor D \rfloor)
  \bigr)$ is any reflexive $p$-form on $X$, it follows from the Extension
  Theorem, \cite[Thm.~1.5]{GKKP11}, that the pull back of $\sigma$ via $\pi$
  extends to an element $\wtilde \sigma \in H^0\bigl(\wtilde X,\,
  \Omega^p_{\wtilde X} (\log \wtilde D) \bigr)$. However, global logarithmic
  forms on the projective snc pair $(\wtilde X, \wtilde D)$ are closed
  by~\cite[(3.2.14)]{DeligneHodgeII}. In particular,
  $$
  d\bigl(\wtilde{\sigma}|_{\wtilde X \setminus \what D} \bigr)= 0, \quad
  \text{where } \what D := (\text{$\pi$-exceptional set}) \cup \supp \wtilde D.
  $$
  Since $\pi$ identifies $\wtilde X \setminus \what D$ with $X_{\reg} \setminus
  \supp \lfloor D \rfloor$, this shows that $\sigma$ is closed in the sense of
  Definition~\ref{def:closeness}.
\end{proof}

\begin{rem}\label{rem:holclosed}
  Using the Holomorphic Extension Theorem~\ref{thm:holomorphicextension}, an
  analogous result can be shown to hold for (logarithmic) differential forms on
  log canonical locally algebraic compact complex spaces in class
  $\mathscr{C}$. In particular, this applies to Moishezon spaces with log
  canonical singularities.
\end{rem}

\subsection{The Poincaré Lemma in the reflexive setting}

Poincaré's lemma is one of the cornerstones in the theory of differential forms
on complex manifolds. For (analytically) klt spaces, the lemma still holds for
reflexive one-forms. It is currently unclear to what extent a Poincaré lemma can
be expected to hold for reflexive forms of higher degree.

\begin{thm}[Poincaré Lemma for reflexive one-forms]\label{thm:poincare-1}
  Let $X$ be a normal complex space and $D$ an effective $\bQ$-divisor on $X$
  such that $(X,D)$ is analytically klt and locally algebraic. Let $\sigma \in
  H^0\bigl( X,\, \Omega^{[1]}_X \bigr)$ be a closed holomorphic reflexive
  one-form on $X$. Then there exists a covering of $X$ by subsets
  $(U_\alpha)_{\alpha \in A}$ that are open in the euclidean topology, and
  holomorphic functions $f_\alpha \in \sO_{X}(U_\alpha)$ such that
  $\sigma|_{U_{\alpha,\reg}} = df_\alpha|_{U_{\alpha,\reg}}$.
\end{thm}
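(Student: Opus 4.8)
The plan is to reduce the statement, via the local-algebraicity hypothesis, to an analogous statement for a reflexive one-form on a quasi-projective klt variety, and then invoke the fact that the extension theorem produces a holomorphic one-form on a resolution, where the ordinary Poincaré lemma applies. First I would fix a point $p \in X$; by local algebraicity, after shrinking, we may assume $(X, D) = (V, (D_\lambda)^{an}|_V)$ for an open subset $V$ of the analytification of a normal quasi-projective klt variety $(Y, D_\lambda)$. The closed reflexive one-form $\sigma$ lives on $X_{\reg}$, so on $V_{\reg}$, which equals $(Y_{\reg})^{an} \cap V$. Next I would apply the Holomorphic Extension Theorem~\ref{thm:anaext} (equivalently \ref{thm:holomorphicextension} with empty non-klt locus, since $(X,D)$ is klt) to a log resolution $\pi\colon \wtilde X \to X$: the pull-back $\pi^*\sigma$, a priori defined only away from the exceptional set, extends to a genuine holomorphic one-form $\wtilde\sigma \in H^0\bigl(\wtilde X,\, \Omega^1_{\wtilde X}\bigr)$.

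The second step is to check that $\wtilde\sigma$ is \emph{closed} as a holomorphic one-form on the smooth complex manifold $\wtilde X$. Since $d\wtilde\sigma$ is a holomorphic two-form on $\wtilde X$ vanishing on the dense open set $\wtilde X \setminus \Exc(\pi)$ (where it agrees with $\pi^*(d\sigma) = 0$), it vanishes identically by the identity theorem. Now the classical holomorphic Poincaré lemma applies on the complex manifold $\wtilde X$: cover $\wtilde X$ by euclidean-open sets $\wtilde U_\beta$ on which $\wtilde\sigma = d\wtilde f_\beta$ for holomorphic $\wtilde f_\beta \in \sO_{\wtilde X}(\wtilde U_\beta)$. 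The remaining task is to push these primitives down to $X$. Here I would use that $\pi$ is proper with connected fibres and that $X$ is normal, so $\pi_*\sO_{\wtilde X} = \sO_X$; concretely, for a euclidean-open $U_\alpha \subseteq X$ I would work on the connected open set $\pi^{-1}(U_\alpha)$, and the key point is to choose the local primitives compatibly so that they glue to a single $\wtilde f_\alpha \in \sO_{\wtilde X}\bigl(\pi^{-1}(U_\alpha)\bigr)$.

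The main obstacle is this gluing/descent step: the local holomorphic primitives $\wtilde f_\beta$ differ on overlaps by \emph{locally constant} functions (since their differentials agree), so one must arrange that these constants can be absorbed to produce a global primitive on each $\pi^{-1}(U_\alpha)$, and that the resulting function is constant along the fibres of $\pi$ so as to descend. I would handle this by choosing $U_\alpha$ small enough (e.g. a Stein neighbourhood of a point, or a polydisc in a local-algebraicity chart) that $\pi^{-1}(U_\alpha)$ is connected and the relevant $H^1$ with constant coefficients vanishes, so the constants form a coboundary; equivalently, one can fix the value of the primitive at a single point of each fibre. Along a fibre $\pi^{-1}(q)$ (with $q \in U_\alpha$), the restriction of $\wtilde\sigma$ pulls back from a point and is therefore zero, so any local primitive is locally constant on the fibre, hence constant by connectedness of the fibre; this forces $\wtilde f_\alpha$ to be constant on fibres of $\pi$ and thus to descend to $f_\alpha \in \sO_X(U_\alpha)$ with $\pi^* f_\alpha = \wtilde f_\alpha$. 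Finally, on $U_{\alpha,\reg}$, away from $\pi(\Exc(\pi))$, $\pi$ is a biholomorphism and $\wtilde\sigma = \pi^*\sigma$, so $df_\alpha = \sigma$ there; since both sides are reflexive (indeed holomorphic) one-forms on the normal space $U_\alpha$ agreeing on the complement of an analytic set of codimension $\geq 2$ inside $U_{\alpha,\reg}$, the identity $\sigma|_{U_{\alpha,\reg}} = df_\alpha|_{U_{\alpha,\reg}}$ holds on all of $U_{\alpha,\reg}$, completing the proof. The corollary on canonical representation by a Kähler differential then follows by noting that the $df_\alpha$ patch to a global section of the sheaf of Kähler differentials mapping to $\sigma$.
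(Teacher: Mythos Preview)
Your overall architecture is right --- extend $\sigma$ to a closed form $\wtilde\sigma$ on a resolution and descend local primitives --- but there is a genuine gap in the descent step.  You claim that ``along a fibre $\pi^{-1}(q)$, the restriction of $\wtilde\sigma$ pulls back from a point and is therefore zero''.  This is not justified: $\wtilde\sigma$ is produced by the extension theorem, not as a pull-back $\pi^*\sigma$ of a globally defined form on $X$.  It agrees with $\pi^*\sigma$ only away from the exceptional locus, and there is no \emph{a priori} reason its restriction to a positive-dimensional exceptional fibre $F=\pi^{-1}(x)$ should vanish.  (Indeed, for log canonical $X$ it can fail: over a cone on an elliptic curve, the extended form restricts to a nonzero form on $F$.)  This vanishing is precisely the nontrivial input: the paper invokes Namikawa's result $H^0\bigl(F,\,\Omega^1_F/\tor\bigr)=0$, which forces $\iota_i^*\wtilde\sigma=0$ on each smooth component $F_i$ of $F$; equivalently, one can use that fibres over klt points are rationally chain connected.

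Your gluing mechanism is also not quite right.  You want to choose $U_\alpha$ so that the obstruction in $H^1\bigl(\pi^{-1}(U_\alpha),\bC\bigr)$ vanishes, but even after shrinking $U_\alpha$ this $H^1$ contains $H^1(F,\bC)$, which you have not shown to vanish.  The paper avoids this cohomological obstruction entirely: once one knows $\wtilde\sigma|_{F_i}=0$, each local primitive $g_\beta$ is locally constant on $V_\beta\cap F$, hence (choosing the $V_\beta$ with connected $V_\beta\cap F$) can be normalised so that $g_\beta|_F\equiv 0$.  Then on overlaps the constant difference $g_{\beta_1}-g_{\beta_2}$ vanishes on the nonempty set $V_{\beta_1}\cap V_{\beta_2}\cap F$, hence is zero, and the primitives glue without any appeal to $H^1$.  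Inserting the Namikawa vanishing (or an argument via rational chain connectedness of $F$) and replacing your $H^1$ step by this normalisation repairs the proof.
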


\begin{subrem}
  For isolated rational singularities, slightly more general results have been
  obtained in \cite[Prop.~2.5]{CampanaFlennerContactSings}.
\end{subrem}

\begin{subrem}
  If the pair $(X,D)$ is only assumed to be log-canonical, the Poincaré lemma
  fails to be true in general. As an example, consider the affine cone $X$ over
  a smooth plane cubic curve $C \subset \mathbb{P}^2$, together with its minimal
  resolution $\widetilde X \to X$. The variety $\widetilde X$ is the total space
  of the line bundle $\mathscr{O}_C(-1)$, hence it fibres over the exceptional
  set $E \subset \widetilde X$. Pulling-back a nowhere vanishing global regular
  one-form from $E\cong C$ to $\widetilde X$ yields a closed regular one-form
  which does not have a primitive in any euclidean neighbourhood of $E$ in
  $\widetilde X$.
\end{subrem}

\begin{proof}[Proof of Theorem~\ref{thm:poincare-1}]
  Let $x \in X$ be any point. We aim to construct an open neighborhood $U =
  U(x)$ and a function $f \in \sO_X(U)$ satisfying the requirements of the
  theorem. To this end, consider a log resolution $\pi : \wtilde X \to X$ such
  that both the $\pi$-exceptional set $E$ and the fiber $F := \pi^{-1}(x)$ are
  divisors with simple normal crossing support. Let $F = \cup_i F_i$ be the
  decomposition into irreducible components.

  In this setting, the holomorphic Extension
  Theorem~\ref{thm:holomorphicextension} guarantees that there exists a
  differential form $\wtilde \sigma \in H^0\bigl( \wtilde X,\, \Omega^1_{\wtilde
    X} \bigr)$ which agrees over the smooth part $X_{\reg}$ with the pull-back
  of $\sigma$. Using the classical Poincaré Lemma and elementary topology, we
  find a finite number of sets $(V_\beta)_{\beta \in B} \subset \wtilde X$, open
  in the euclidean topology, which cover $F$ and satisfy the following
  additional properties.
  \begin{enumerate-p}
    \setcounter{enumi}{\value{equation}}
  \item\label{il:x1} For each index $\beta \in B$, the intersection $V_\beta
    \cap F$ is not empty and connected.
  \item\label{il:x2} Given indices $\beta_1, \beta_2 \in B$ with $V_{\beta_1}
    \cap V_{\beta_2} \not = \emptyset$, then $V_{\beta_1} \cap V_{\beta_2}$ is
    connected and $V_{\beta_1} \cap V_{\beta_2} \cap F \not = \emptyset$.
  \item\label{il:x3} For each index $\beta \in B$, there exists a holomorphic
    function $g_\beta \in \sO_{\wtilde X}(V_\beta)$ such that $\wtilde
    \sigma|_{V_\beta} = dg_\beta$.
  \end{enumerate-p}
  To continue, we recall a result of Namikawa
  \cite[Lem.~1.2]{NamikawaDeformationTheory}, which asserts in our setup that
  $H^0 \bigl( F, \Omega^1_F / \tor \bigr) = 0$. In particular, if $i$ is any
  index and $\iota_i : F_i \to \wtilde X$ the inclusion map, then
  \begin{equation}\label{eq:diota}
    d\iota_i (\wtilde \sigma) = 0 \in H^0 \bigl( F_i, \, \Omega^1_{F_i} \bigr)    
  \end{equation}
  Since $d(\iota_i|_{V_\beta\cap F_i }) (dg_\beta) = d(g_\beta|_{V_\beta\cap F_i
  }) = 0$, Equation~\eqref{eq:diota} implies that the functions $g_\beta$ are
  locally constant along $V_\beta \cap F$. Using
  (\ref{thm:poincare-1}.\ref{il:x1}), we can thus assume the following.
  \begin{enumerate-p}
    \setcounter{enumi}{\value{equation}}
  \item\label{il:x4} For each index $\beta \in B$, the function $g_\beta$ of
    (\ref{thm:poincare-1}.\ref{il:x3}) vanishes along $F$, that is, $g_\beta \in
    \sJ_{F}(V_\beta) \subset \sO_{\wtilde X}(V_\beta)$.
  \end{enumerate-p}
  Now, given any two indices $\beta_1, \beta_2 \in B$ with $V_{\beta_1} \cap
  V_{\beta_2} \not = \emptyset$, it follows from
  (\ref{thm:poincare-1}.\ref{il:x3}) that the difference
  $g_{\beta_1}|_{V_{\beta_1} \cap V_{\beta_2}} - g_{\beta_2}|_{V_{\beta_1} \cap
    V_{\beta_2}}$ is locally constant, hence constant by
  (\ref{thm:poincare-1}.\ref{il:x2}). Using (\ref{thm:poincare-1}.\ref{il:x4}),
  we see that this difference is actually zero along the non-empty set
  $V_{\beta_1} \cap V_{\beta_2} \cap F$. In summary, the functions $g_\beta$
  glue to give a globally defined holomorphic function $g \in \sO_{\wtilde
    X}(\cup V_\beta)$. Since $\pi$ is proper, we find a neighborhood $U = U(x)$,
  open in the euclidean topology, such that $\pi^{-1}(U) \subset \cup V_\beta$.
  The existence of a holomorphic function $f \in \sO_{X}(U)$ satisfying $g = f
  \circ \pi$ is then immediate.
\end{proof}

\begin{rem}
  In the setting of the proof, Namikawa's vanishing $H^0 \bigl( F, \Omega^1_F /
  \tor \bigr) = 0$ can also be shown by elementary methods, using that $F$ is
  rationally chain connected.
\end{rem}

\subsection{Representing reflexive forms by Kähler differentials}

Given a normal variety or a normal complex space $X$ , there exists a natural
morphism $b : \Omega^p_X \to \Omega_X^{[p]}$ from the sheaf of Kähler
differentials to its double dual. In general, the morphism $b$ is neither
injective nor surjective, even if $X$ is assumed to have the mildest possible
singularities considered in the Minimal Model Program. A corresponding series of
examples is worked out in detail in \cite{GrebRollenske}.

The kernel of $b$ is exactly the subsheaf of torsion elements in $\Omega^1_X$,
so that there is an exact sequence
\begin{equation}\label{eq:xxB1}
  \xymatrix{ %
    0 \ar[r] &  \tor \Omega^1_X \ar[r]^a & \Omega^1_X \ar[r]^b & \Omega^{[1]}_X. \\
  }
\end{equation}

\begin{thm}[Representation of closed forms by Kähler differentials]\label{thm:xx}
  Let $X$ be an irreducible normal complex space and $D$ an effective $\mathbb
  Q-$divisor such that $(X,D)$ is analytically klt and locally algebraic.
  \begin{enumerate-p}
    \setcounter{enumi}{\value{equation}}
  \item \label{il:Poincare} If $\sigma \in H^0 \bigl( X,\, \Omega^{[1]}_X
    \bigr)$ is any closed reflexive one-form, then there exists a canonically
    defined Kähler form $\sigma_K \in H^0 \bigl( X,\, \Omega^{1}_X \bigr)$ such
    that $\sigma = b(\sigma_K)$.
  \item \label{il:projectivePoincare} If $X$ is a projective algebraic variety
    with at worst klt singularities, then the sequence
    $$
    \xymatrix{ %
      & \quad 0 \ar[r] & H^0\bigl( X,\, \tor \Omega^1_X \bigr) \ar[r]^{\overline{a}} & H^0\bigl( X,\, \Omega^1_X
      \bigr) \ar[r]^{\overline{b}} & H^0\bigl( X,\, \Omega^{[1]}_X \bigr) \ar[r] & 0
    }
    $$
    is exact and canonically split.
  \end{enumerate-p}
\end{thm}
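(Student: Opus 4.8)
The plan is to construct $\sigma_K$ \emph{locally} and then glue. By Theorem~\ref{thm:poincare-1}, a closed reflexive one-form $\sigma$ admits, on a suitable euclidean open cover $(U_\alpha)$, holomorphic potentials $f_\alpha \in \sO_X(U_\alpha)$ with $\sigma|_{U_{\alpha,\reg}} = df_\alpha|_{U_{\alpha,\reg}}$. The natural candidate for the Kähler representative is $\sigma_K|_{U_\alpha} := d f_\alpha \in H^0\bigl(U_\alpha,\, \Omega^1_X\bigr)$, where now $d$ denotes the universal Kähler derivation $\sO_X \to \Omega^1_X$. One must check two things: first, that these local pieces glue to a global section of $\Omega^1_X$; second, that $b(\sigma_K) = \sigma$. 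For the gluing, on an overlap $U_\alpha \cap U_\beta$ the difference $f_\alpha - f_\beta$ pulls back under $b$ (via $d$) to $\sigma - \sigma = 0$ on the smooth locus, hence $d(f_\alpha - f_\beta)$ is a torsion section of $\Omega^1_X$; but $f_\alpha - f_\beta$ is a holomorphic function whose differential vanishes on the dense smooth open set, so $f_\alpha - f_\beta$ is locally constant there, hence locally constant on $U_\alpha\cap U_\beta$ (as $X$ is irreducible and normal, so $X_{\reg}$ is connected and dense), and therefore $d(f_\alpha-f_\beta)=0$ already in $\Omega^1_X$. Thus the $df_\alpha$ agree on overlaps and define a global $\sigma_K \in H^0\bigl(X,\,\Omega^1_X\bigr)$. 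Canonicity follows because any two choices of local potentials differ by locally constant functions, which have vanishing Kähler differential; equivalently, $\sigma_K$ is characterized as the unique global Kähler form mapping to $\sigma$ under $b$ whose local potentials are the $f_\alpha$.

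For $b(\sigma_K) = \sigma$: by construction $b(\sigma_K)|_{U_\alpha}$ and $\sigma|_{U_\alpha}$ are reflexive forms that agree on the dense open set $U_{\alpha,\reg}$, and a section of a reflexive (hence torsion-free and normal) sheaf is determined by its restriction to a dense open subset, so they coincide. This proves~\iref{il:Poincare}.

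For~\iref{il:projectivePoincare}, I would first argue exactness on global sections. Left-exactness at $H^0\bigl(X,\,\Omega^1_X\bigr)$ is immediate from the sheaf sequence~\eqref{eq:xxB1}. For surjectivity of $\overline b$: given $\sigma \in H^0\bigl(X,\,\Omega^{[1]}_X\bigr)$, the Closedness Theorem~\ref{thm:closedness} (applied with $D=\emptyset$, or rather with the klt boundary, and $p=1$) shows that $\sigma$ is closed; since $X$ is projective it is in particular locally algebraic, so part~\iref{il:Poincare} produces $\sigma_K \in H^0\bigl(X,\,\Omega^1_X\bigr)$ with $\overline b(\sigma_K)=\sigma$. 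Hence the sequence is exact. To see that it is canonically split, note that $\sigma \mapsto \sigma_K$ is a well-defined linear section of $\overline b$ by the canonicity established in~\iref{il:Poincare}; its construction uses only $\sigma$ (and the fixed geometry of $X$), so it is canonical. This gives the splitting $H^0\bigl(X,\,\Omega^1_X\bigr) \cong H^0\bigl(X,\,\tor\Omega^1_X\bigr) \oplus H^0\bigl(X,\,\Omega^{[1]}_X\bigr)$.

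The main obstacle I anticipate is the gluing/canonicity step in part~\iref{il:Poincare}: one needs the local potentials $f_\alpha$ to be compatible \emph{as honest holomorphic functions up to constants}, not merely after restriction to the smooth locus, and this requires care with connectedness of $X_{\reg}$ (using normality and irreducibility of $X$, so that $X_{\reg}$ is connected and $\sO_X(U) \hookrightarrow \sO_{X_{\reg}}(U_{\reg})$ for connected $U$) and with the fact that the torsion subsheaf $\tor\Omega^1_X$ consists precisely of sections dying under $b$, i.e.\ supported on $X_{\sing}$. A secondary subtlety is to make the word ``canonical'' precise: the cleanest route is to observe that $\sigma_K$ is the unique element of $H^0\bigl(X,\,\Omega^1_X\bigr)$ with $b(\sigma_K)=\sigma$ \emph{and} with the property that it is locally exact (locally of the form $df$ for $f$ holomorphic), which pins it down independently of all choices of cover and potentials.
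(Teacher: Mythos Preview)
Your argument for~\iref{il:Poincare} is correct and is essentially what the paper means when it says this assertion ``follows immediately from the Poincar\'e Lemma~\ref{thm:poincare-1}'': you have spelled out the gluing step that the paper leaves implicit, and your observation that differences of local potentials are locally constant (hence have vanishing K\"ahler differential) is exactly the point.

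For~\iref{il:projectivePoincare}, however, you are glossing over the analytic--algebraic passage. The statement concerns the \emph{algebraic} sheaves $\Omega^1_X$, $\tor\Omega^1_X$, $\Omega^{[1]}_X$ on the projective variety $X$, whereas Part~\iref{il:Poincare} and the Poincar\'e Lemma live entirely in the analytic category: the $\sigma_K$ you produce is \emph{a priori} only a section of $\Omega^1_{X^{an}}$, not of the algebraic $\Omega^1_X$. The paper handles this by first establishing the exact split sequence on $X^{an}$ exactly as you do, and then running a careful GAGA argument: one checks that $(\tor\Omega^1_X)^{an}\cong\tor\Omega^1_{X^{an}}$ and $(\Omega^{[1]}_X)^{an}\cong\Omega^{[1]}_{X^{an}}$ (using flatness of $X^{an}\to X$ and compatibility of $\sHom$ with analytification), so that the analytic and analytified-algebraic torsion sequences coincide, and then Serre's GAGA identifies global sections. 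Your outline becomes a complete proof once you insert this step; without it, the surjectivity of the algebraic $\overline b$ and the algebraicity of the splitting are not justified.
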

\begin{proof}
  Assertion~(\ref{thm:xx}.\ref{il:Poincare}) follows immediately from the
  Poincaré-Lemma~\ref{thm:poincare-1} above. In order to prove Assertion
  (\ref{thm:xx}.\ref{il:projectivePoincare}), let $X$ be a projective algebraic
  variety with at worst klt singularities. By Theorem~\ref{thm:closedness} and
  Assertion~(\ref{thm:xx}.\ref{il:Poincare}), the torsion sequence for analytic
  Kähler differentials yields the following exact and canonically split
  sequence,
  \begin{equation}\label{eq:holcanonicallysplit}
    \begin{xymatrix}{ %
        H^0\bigl( X^{an},\, \tor \Omega^1_{X^{an}} \bigr) \ar@{^(->}[r] & H^0\bigl( X^{an},\, \Omega^1_{X^{an}}
        \bigr) \ar@{->>}[r] & H^0\bigl( X^{an},\, \Omega^{[1]}_{X^{an}} \bigr).
      }
    \end{xymatrix}
  \end{equation}
  To compare the torsion sequences for analytic and algebraic Kähler
  differentials we consider the following commutative diagram, both rows of
  which are exact.
  $$
  \begin{xymatrix}{
      0 \ar[r]& \mathrm{tor}\,\Omega_{X^{an}}^1 \ar[r] & \Omega_{X^{an}}^1 \ar[r]& \Omega^{[1]}_{X^{an}}\ar[r] & 0 \\
      0 \ar[r]& \bigl(\mathrm{tor}\, \Omega_X^1\bigr)^{an} \ar[r]\ar[u]_\alpha & \bigl(\Omega_X^1\bigr)^{an}\ar[r]\ar[u]_\beta & \bigl(\Omega_X^{[1]} \bigr)^{an}\ar[r]\ar[u]_\gamma & 0
    }
  \end{xymatrix}
  $$
  By the functoriality properties of the sheaf of Kähler differentials $\beta$
  is an isomorphism. Additionally, it follows from equation
  \eqref{eq:homandhomagain} that $\gamma$ is isomorphic. Consequently, $\alpha$
  is likewise an isomorphism.

  It hence follows from GAGA \cite[Thm.~1]{GAGA} that in the following
  commutative diagram the vertical maps are isomorphic.
  $$
  \begin{xymatrix}{
      0 \ar[r]& H^0\bigl(X^{an},\, \mathrm{tor}\,\Omega_{X^{an}}^1\bigr) \ar[r] & H^0\bigl(X^{an},\,\Omega_{X^{an}}^1\bigr) \ar[r]& H^0\bigl(X^{an},\, \Omega^{[1]}_{X^{an}} \bigr)\ar[r] & 0 \\
      0 \ar[r]& H^0\bigl(X,\, \mathrm{tor}\, \Omega_X^1\bigr)
      \ar[r]^{\overline{a}}\ar[u]_{\overline{\alpha}} & H^0\bigl(X,\,
      \Omega_X^1\bigr)\ar[r]^{\overline{b}}\ar[u]_{\overline{\beta}} &
      H^0\bigl(X,\, \Omega_X^{[1]} \bigr)\ar[u]_{\overline{\gamma}} \ar[r] & 0 }
  \end{xymatrix}
  $$
  The upper row coincides with \eqref{eq:holcanonicallysplit} and is therefore
  exact and canonically split. Consequently, the same holds for the lower row,
  which coincides with the sequence of
  (\ref{thm:xx}.\ref{il:projectivePoincare}). This concludes the proof.
\end{proof}

\begin{rem}
  For projective varieties, exactness and splitting of
  Sequence~(\ref{thm:xx}.\ref{il:projectivePoincare}) can also be concluded from
  the fact that the Albanese map of any desingularisation factors via $X$, and
  that any 1-form is a pull-back from the Albanese torus.
\end{rem}

\begin{question}
  Are there similar results for reflexive $p$-forms, for $p > 1$? If in the
  setup of Theorem~\ref{thm:xx} both the pair $(X,D)$ and the form $\sigma$ are
  algebraic, is $\sigma_K$ also algebraic? ---The last question has a positive
  answer in the case of isolated singularities.
\end{question}

\appendix

\section{Stability notions on singular spaces}
\label{sec:snsp}

The proof of Theorem~\ref{thm:rc} uses the notion of semistable sheaves on
singular spaces, where semistability is defined with respect to a fixed movable curve
class. As we are not aware of any reference that discusses these matters in
detail, we chose to include a short and self-contained introduction here. We
feel that these results might be of independent interest.

\subsection{Semistability with respect to a movable class}
\label{ssec:swrtm}

On a polarized complex manifold, it is well-understood that the tensor product
of any two semistable locally free sheaves is again semistable. We will show in
this appendix that the reflexive tensor product of two semistable sheaves on a
singular, $\mathbb Q$-factorial space is again semistable, even if the
polarization is only given by a movable curve class, see
Proposition~\ref{prop:SSreflTensor} below. To start, we recall the relevant
definition of semistability with respect to movable classes.

\begin{defn}[Semistability with respect to a movable class]\label{def:semistability}
  Let $X$ be a normal $\mathbb Q$-factorial projective variety and $\alpha \in
  N_1(X)_{\mathbb Q}$ a numerical curve class\footnote{See
    \cite[Sect.~II.4]{K96} for the definitions of the spaces $N_1(X)_{\bQ}$ and
    $N^1(X)_{\bQ}$, and for all relevant facts used here.}. Assume that $\alpha$
  is \emph{movable}, that is, that $\alpha$ intersects any effective Cartier
  divisor non-negatively.

  If $\sF$ is any coherent sheaf of $\sO_X$-modules that is torsion free in
  codimension one, the determinant $\det \sF := (\wedge^{\rank \sF} \sF)^{**}$
  is a $\mathbb Q$-Cartier Weil divisorial sheaf and therefore defines a class
  $[\det \sF] \in N^1(X)_{\bQ}$. Using the non-degenerate bilinear pairing
  \begin{equation}\label{eq:blp}
    N_1(X)_{\bQ} \times N^1(X)_{\bQ} \to \bQ,
  \end{equation}
  one defines the \emph{slope of $\sF$ with respect to $\alpha$} as the rational
  number
  \begin{equation}\label{eq:d1}
    \mu_{\alpha}(\sF) := \frac{ [ \det \sF ] . \alpha}{\rank(\sF)}.    
  \end{equation}
  Recalling that subsheaves of $\sF$ are again torsion free in codimension one,
  we set
  \begin{equation}\label{eq:d2}
    \mu_{\alpha}^{\max} (\sF) := \sup \left\{ \mu_\alpha(\sG) \,|\, \text{$\sG
        \subseteq \sF$ a coherent subsheaf} \right\}
  \end{equation}
  We say that $\sF$ is \emph{$\alpha$-semistable} if $\mu_{\alpha}^{\max}(\sF) =
  \mu_{\alpha}(\sF)$.
\end{defn}

The following proposition asserts that $\mu_{\alpha}^{\max}$ is never
infinite. The supremum used in its definition is actually a maximum.

\begin{prop}\label{prop:mumax}
  In the setting of Definition~\ref{def:semistability}, there exists a coherent
  subsheaf $\sG \subseteq \sF$ such that $\mu_{\alpha}^{\max} (\sF) =
  \mu_\alpha(\sG)$. In particular, $\mu_{\alpha}^{\max} (\sF)$ is a rational
  number.
\end{prop}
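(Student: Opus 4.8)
The plan is to bound the slopes $\mu_\alpha(\sG)$ of subsheaves $\sG \subseteq \sF$ from above by a single constant, and then to exhibit a subsheaf achieving the supremum by a Noetherian / maximality argument on the ranks and determinant classes that occur. The key point throughout is that $\alpha$ is movable, so that $[\Delta].\alpha \geq 0$ for every effective Cartier divisor $\Delta$; since $X$ is $\bQ$-factorial, the same holds for effective Weil divisors after clearing denominators, so the pairing with $\alpha$ is monotone with respect to inclusions of divisorial sheaves.

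First I would reduce to the case where $\sF$ is reflexive, or at least torsion-free: replacing $\sF$ by $\sF^{**}$ only enlarges the family of subsheaves, and one checks that $\mu_\alpha^{\max}$ is unchanged because any subsheaf of $\sF^{**}$ can be intersected back with $\sF$ without changing its rank or its saturated determinant class. Next, fix a very ample divisor $H$ on $X$ and, for each $r$ with $1 \leq r \leq \rank \sF$, consider the saturation of a rank-$r$ subsheaf inside $\sF$; its determinant $\det \sG$ embeds as a subsheaf of $\det \sF$ (wedge the inclusion, take double duals), so $\det \sF \otimes (\det \sG)^{-1}$ has a section, i.e.\ it is an effective Weil divisorial sheaf. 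Movability of $\alpha$ then gives $[\det \sG].\alpha \leq [\det \sF].\alpha$, hence $\mu_\alpha(\sG) \leq [\det \sF].\alpha / r$, which for fixed $r$ is bounded and over the finitely many values of $r$ gives a uniform upper bound. This shows $\mu_\alpha^{\max}(\sF) < \infty$.

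To see that the supremum is attained, I would argue by contradiction in the usual style for maximal destabilizing subsheaves: among all subsheaves whose slope is close to $\mu_\alpha^{\max}(\sF)$, pass first to those of maximal rank, and among those to one whose determinant is maximal in the partial order given by inclusion of divisorial sheaves (this order is Noetherian on subsheaves of the fixed divisorial sheaf $\det \sF$ because $X$ is Noetherian and the relevant divisors are bounded, e.g.\ they lie in a fixed linear system up to the finite ambiguity coming from denominators). Saturating does not decrease the slope, so one may assume the extremal subsheaf $\sG$ is saturated; maximality of its rank and determinant, together with the fact that slopes of subsheaves are rational numbers with bounded denominator (the denominator divides $r \cdot m$ where $m$ clears the Cartier index of $\det\sF$), forces $\mu_\alpha(\sG) = \mu_\alpha^{\max}(\sF)$, and rationality of this value is then immediate. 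The main obstacle I anticipate is the boundedness needed to make the Noetherian argument on determinants go through cleanly in the $\bQ$-factorial (rather than factorial) setting — one must be careful that the divisorial sheaves $\det \sG$ arising from subsheaves of bounded rank really do form a bounded family, which is where $\bQ$-factoriality and the reflexivity of determinants are used; I would phrase this via the boundedness of effective divisors $D$ with $0 \leq D \leq \det\sF$ in a fixed $\bQ$-linear system, after multiplying through by the Cartier index.
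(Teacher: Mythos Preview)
There are two genuine gaps.

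\textbf{Boundedness.} Your argument that $\det \sG$ embeds in $\det \sF$ is incorrect. When $\rank \sG = r < \rank \sF$, wedging the inclusion gives $\det \sG \hookrightarrow \bigl(\bigwedge^{[r]} \sF\bigr)$, whose target has rank $\binom{\rank \sF}{r} > 1$; there is no natural map to $\det \sF$, and $\det \sF \otimes (\det \sG)^{-1}$ need not be effective. Equivalently, for saturated $\sG$ with torsion-free quotient $\sQ$ one has $\det \sF \cong \det \sG \otimes \det \sQ$, but nothing forces $\det \sQ$ to have a section. The paper fixes this by first embedding $\sF \subseteq \sO_X(mH)^{\oplus N}$ for very ample $H$: any saturated subsheaf of the right-hand side has torsion-free quotient $\sQ'$ which is globally generated, so $\det \sQ'$ is effective, and movability of $\alpha$ gives $[\det \sG].\alpha \leq Nm[H].\alpha$.

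\textbf{Attainment.} This is where the real content lies, and your approach does not close. The claim that slopes of subsheaves have denominator bounded by $r \cdot m$ for a fixed $m$ is precisely what is unavailable in the merely $\bQ$-factorial setting: there is in general no single integer clearing the Cartier index of every Weil divisor $\det \sG$ as $\sG$ ranges over subsheaves, so the set of possible slopes is not a priori discrete and your Noetherian argument on determinants has no bounded family to sit in. You correctly flag this as the obstacle but do not resolve it. The paper bypasses discreteness entirely with a rank argument: assuming the supremum is not attained, choose saturated subsheaves $(\sG_j)$ with $\mu_\alpha(\sG_j) \nearrow \mu_\alpha^{\max}(\sF)$ strictly, all of the same rank $r$, with $r$ \emph{maximal} among ranks for which such a sequence exists. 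Saturation forces $\sG_i \not\subseteq \sG_j$ for $i < j$, hence $\rank(\sG_i + \sG_j) > r$; the sequence $0 \to \sG_i \cap \sG_j \to \sG_i \oplus \sG_j \to \sG_i + \sG_j \to 0$ then gives
\[
\mu_\alpha(\sG_i + \sG_j) > \mu_\alpha^{\max}(\sF) - \tfrac{1}{i} - \tfrac{1}{j},
\]
producing an approximating sequence of strictly larger rank and contradicting the maximality of $r$.
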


The remainder of the present Section~\ref{ssec:swrtm} is devoted to the proof of
Proposition~\ref{prop:mumax}. We have subdivided the proof into several
relatively independent steps, given in
Sections~\ref{ssec:popmum0}--\ref{ssec:popmum5}, respectively. The subsequent
Sections~\ref{ssec:AB}--\ref{pf:prop14} establish semistability of tensor
products, using invariance properties of slopes to reduce the problem to known
cases.

\subsubsection{Proof of Proposition~\ref*{prop:mumax}, invariance of $\mu_\alpha^{\max}$ under removal of torsion}
\label{ssec:popmum0}

We maintain notation and assumptions of Proposition~\ref{prop:mumax} throughout
the proof. The following observation will help to simplify the setting.

\begin{obs}\label{obs:remtor}
  Consider the natural quotient map 
  $$
  p : \sF \to \check \sF := \sF/\tor.
  $$
  If $\sG \subseteq \sF$ is any coherent subsheaf, it is clear that $\sG$ and
  $\check \sG := p(\sG)$ differ only along a set of codimension two, if at
  all. It follows that $\mu_\alpha(\check \sG) = \mu_\alpha(\sG)$. Likewise,
  given any coherent subsheaf $\check \sG \subseteq \check \sF$, set $\sG :=
  p^{-1}(\check \sG)$. Again, $\sG$ and $\check \sG$ differ only along a small
  set, and $\mu_\alpha(\check \sG) = \mu_\alpha(\sG)$.
\end{obs}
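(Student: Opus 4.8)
The plan is to reduce both halves of the statement to a single slope-invariance principle: two coherent sheaves on $X$ that are torsion free in codimension one and that agree on the complement of a closed set of codimension at least two have the same rank and the same determinant class, hence the same slope.

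First I would verify the codimension-two claim for $\check\sG = p(\sG)$. The restriction $p|_\sG\colon \sG \to \check\sG$ is surjective by the definition of $\check\sG$, and its kernel is $\sG \cap \ker p = \sG \cap \tor\sF$, a subsheaf of $\tor\sF$. Since $\sF$ is torsion free in codimension one, the torsion subsheaf $\tor\sF$ is supported on a closed set $Z := \supp(\tor\sF)$ with $\codim_X Z \geq 2$. Setting $U := X \setminus Z$, the map $p|_\sG$ restricts to an isomorphism $\sG|_U \xrightarrow{\sim} \check\sG|_U$; this is exactly the assertion that the two sheaves differ only along the codimension-two set $Z$.

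The heart of the matter is the slope equality. Both $\sG$ and $\check\sG$ are torsion free in codimension one ($\sG \subseteq \sF$, and $\check\sG \subseteq \check\sF$ with $\check\sF$ torsion free), so $\mu_\alpha$ is defined for each. Their ranks agree, being computed at the generic point, which lies in $U$. For the determinants, I would use that forming the $r$-th exterior power and the double dual both commute with restriction to the open set $U$, whence $\det\sG|_U \cong \det\check\sG|_U$. Now $\det\sG$ and $\det\check\sG$ are reflexive rank-one (divisorial) sheaves, and a reflexive sheaf $\sE$ on the normal variety $X$ satisfies $\sE = j_*(\sE|_U)$ for the inclusion $j\colon U \into X$ of the complement of a set of codimension at least two. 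Applying this to both determinants gives $\det\sG \cong j_*(\det\sG|_U) \cong j_*(\det\check\sG|_U) \cong \det\check\sG$, so $[\det\sG] = [\det\check\sG]$ in $N^1(X)_\bQ$ and therefore $\mu_\alpha(\sG) = \mu_\alpha(\check\sG)$ by the slope formula \eqref{eq:d1}.

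For the second statement, given $\check\sG \subseteq \check\sF$ I would set $\sG := p^{-1}(\check\sG)$, which is coherent as the preimage of a coherent subsheaf under a morphism of coherent sheaves, and torsion free in codimension one as a subsheaf of $\sF$. It sits in a short exact sequence $0 \to \tor\sF \to \sG \to \check\sG \to 0$, and over $U$ the torsion term vanishes, so $\sG|_U \cong \check\sG|_U$; the argument of the previous paragraph then applies verbatim to give $\mu_\alpha(\sG) = \mu_\alpha(\check\sG)$. The only genuinely substantive input is the property that the reflexive determinant is determined by its codimension-one behaviour, i.e.\ is insensitive to modifications in codimension at least two; everything else is bookkeeping. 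I expect this reflexivity/$S_2$ step to be the one worth stating carefully, since it is precisely what makes $\mu_\alpha$ a codimension-one invariant.
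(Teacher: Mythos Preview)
Your argument is correct and is precisely the unpacking of what the paper leaves implicit: the paper states this as an observation without proof, relying on the reader to recognize that $\tor\sF$ is supported in codimension $\geq 2$ and that the reflexive determinant is determined by its restriction to any big open set. Your write-up makes both of these points explicit and in the right order, so there is nothing to compare against beyond noting that you have supplied the details the authors chose to omit.
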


In summary, Observation~\ref{obs:remtor} shows that $\mu_\alpha^{\max}(\sF) =
\mu_{\alpha}^{\max} (\check \sF)$. Better still, there exists a sheaf $\sG
\subseteq \sF$ such that $\mu_\alpha(\sG) = \mu_{\alpha}^{\max} (\sF)$ if and
only if there exists a sheaf $\check \sG \subseteq \check \sF$ such that
$\mu_\alpha(\check \sG) = \mu_{\alpha}^{\max} (\check \sF)$. We will therefore
make the following assumption for the remainder of the proof.

\begin{awlog}\label{awlog:tors}
  The sheaf $\sF$ is torsion-free.
\end{awlog}

\subsubsection{Proof of Proposition~\ref*{prop:mumax}, bounding $\mu_{\alpha}^{\max} (\sF)$}
\label{ssec:popmum1}

As a first step towards a proof of Proposition~\ref{prop:mumax}, the following
Lemma~\ref{lem:mumax} asserts that the slopes $\mu_\alpha(\sG)$ of subsheaves
$\sG \subseteq \sF$ are uniformly bounded from above. Its proof uses only fairly
standard arguments, see for instance \cite[p.~62]{MP97}.

\begin{lem}\label{lem:mumax}
  Setting as above. Then $\mu_{\alpha}^{\max} (\sF) < \infty$.
\end{lem}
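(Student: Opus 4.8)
The plan is to bound $\mu_\alpha(\sG)$ for an arbitrary coherent subsheaf $\sG \subseteq \sF$ in terms of data depending only on $\sF$ and $\alpha$, not on $\sG$. First I would fix an ample divisor $H$ on $X$ and an ample divisor $A$ whose class lies on the boundary of the movable cone in such a way that $\alpha$ can be written as a positive combination of intersection classes of ample divisors; more concretely, since $\alpha$ is movable, for every $n-1$ ample divisors $A_1, \ldots, A_{n-1}$ the number $\alpha \cdot (\text{divisor})$ behaves like a limit of complete-intersection curve classes. The cleanest route, however, is to reduce to a curve: choose general members $H_1, \ldots, H_{n-1} \in |mH|$ for $m \gg 0$ so that $C := H_1 \cap \cdots \cap H_{n-1}$ is a smooth irreducible curve contained in $X_{\reg}$ by Bertini, and compare $\mu_\alpha$ with the slope of the restriction $\sF|_C$. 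Since $\alpha$ is movable and $X$ is $\mathbb Q$-factorial, write $\alpha$ as a limit of classes of such complete-intersection curves (or more directly observe $[\det \sG].\alpha$ depends linearly and continuously on $\alpha$, so it suffices to treat $\alpha$ in the interior where it is a positive multiple of $[H^{n-1}]$-type classes up to a controlled error, then pass to the boundary).

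The key step is then the following classical fact, which I would invoke essentially verbatim from \cite[p.~62]{MP97}: for a torsion-free sheaf $\sF$ on a smooth projective curve $C$, or more generally after restricting to a general complete-intersection curve, the slopes of subsheaves are bounded above because any subsheaf $\sG \subseteq \sF$ injects, after restriction, into $\sF|_C$, and for sheaves on a smooth curve $\mu(\sG|_C) \leq \mu^{\max}(\sF|_C) < \infty$ since the Harder--Narasimhan filtration of $\sF|_C$ exists. The passage from $\mu_\alpha(\sG)$ on $X$ to $\mu(\sG|_C)$ on $C$ uses that $[\det \sG].\alpha$ is computed (up to the movable-limit argument) as $\deg(\det \sG|_C) = \deg(\det(\sG|_C))$, at least generically, together with the fact that $\rank \sG = \rank(\sG|_C)$ for $C$ general; one must be mildly careful that $\det$ and restriction to $C$ commute up to divisors avoiding $C$, which holds for general $C$ since $\det \sG$ and $\sG$ agree in codimension one and $C$ meets every prime divisor properly.

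I would organize the write-up as: (i) reduce to $\sF$ torsion-free (already done via Assumption without loss of generality \ref{awlog:tors}); (ii) for $\alpha$ in the interior of the movable cone, pick a general complete-intersection curve $C$ and show $\mu_\alpha(\sG) = \lambda \cdot \mu_{C}(\sG|_C) + (\text{bounded error})$ for a fixed $\lambda > 0$ depending on $\alpha$; (iii) quote the curve case for the uniform bound $\mu_C^{\max}(\sF|_C) < \infty$; (iv) handle general movable $\alpha$ — including those on the boundary of the movable cone — by a density/continuity argument, writing $\alpha = \lim \alpha_k$ with $\alpha_k$ in the interior and noting that the function $\sG \mapsto [\det \sG].\alpha$ is obtained from finitely many $[\det \sG].\alpha_k$ by limits, or more robustly by bounding $[\det \sG].\alpha$ directly against $[\det \sG].H^{n-1}$ using movability of $\alpha$ and nefness to control signs.

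The main obstacle I anticipate is step (iv): when $\alpha$ lies strictly on the boundary of the movable cone (which is the interesting case, since that is exactly where movable-but-not-ample classes live), $\alpha$ is not a positive multiple of a complete-intersection class, so one cannot directly restrict to a curve in class $\alpha$. The fix is to exploit that for any subsheaf $\sG$, the destabilizing quotient $\sF/\sG$ is torsion-free in codimension one, so $[\det \sG] = [\det \sF] - [\det(\sF/\sG)]$ and the determinant of a torsion-free sheaf is, up to the known bound on $[\det(\sF/\sG)].H^{n-1}$ coming from the curve case, pseudo-effective or at worst controlled; intersecting with the movable class $\alpha$ then gives a one-sided bound because movable classes pair non-negatively with pseudo-effective divisors (this is precisely the Boucksom--Demailly--Păun--Peternell duality, but here only the elementary direction — movable versus effective — is needed, which is built into Definition~\ref{def:semistability}). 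Making this last estimate precise, so that the error term stays uniformly bounded over all $\sG$, is the heart of the argument and is where I would spend the most care.
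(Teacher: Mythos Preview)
Your approach has a genuine gap in step~(iv), and the proposed fix does not close it. Writing a movable class $\alpha$ as a limit of complete-intersection classes $\alpha_k$ gives, for each \emph{fixed} subsheaf $\sG$, the equality $\mu_\alpha(\sG) = \lim_k \mu_{\alpha_k}(\sG)$; but the supremum over all $\sG$ need not commute with this limit, so finiteness of each $\mu_{\alpha_k}^{\max}(\sF)$ says nothing about $\mu_\alpha^{\max}(\sF)$ --- the individual bounds may blow up along the sequence. Your attempted repair via $[\det\sG] = [\det\sF] - [\det(\sF/\sG)]$ then fails for a different reason: there is no reason for the determinant of an arbitrary torsion-free quotient of $\sF$ to be pseudo-effective, or to have intersection with $\alpha$ bounded below uniformly in $\sG$. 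For a general torsion-free $\sF$ the class $[\det(\sF/\sG)]$ can be anything, so the phrase ``pseudo-effective or at worst controlled'' hides exactly the estimate you need to prove.

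The paper's argument bypasses curves and limits entirely with one additional idea that makes your quotient-determinant heuristic rigorous: since $\sF$ is torsion-free, embed it into a split sheaf $\sO_X(mH)^{\oplus N}$ for a very ample Cartier divisor $H$, and reduce to bounding $\mu_\alpha^{\max}$ of this split sheaf. For any saturated subsheaf $\sG' \subseteq \sO_X(mH)^{\oplus N}$ with torsion-free quotient $\sQ'$, the determinant $\det\sQ'$ is a quotient of an exterior power of $\sO_X(mH)^{\oplus N}$ and therefore has a non-trivial section, hence is represented by an effective divisor. Movability now gives $[\det\sQ']\cdot\alpha \geq 0$ directly, and $[\det\sG']\cdot\alpha = Nm[H]\cdot\alpha - [\det\sQ']\cdot\alpha \leq Nm[H]\cdot\alpha$, a bound independent of $\sG$. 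Passing from an arbitrary $\sG$ to its saturation $\sG'$ only increases the slope (again by movability, since $[\det\sG'] - [\det\sG]$ is effective), so the same bound holds for all $\sG$. The embedding into $\sO_X(mH)^{\oplus N}$ is precisely the missing ingredient that forces the quotient determinant to be effective.
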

\begin{proof}
  Fix a very ample Cartier divisor $H$. Since $\sF$ is torsion-free, we find
  positive numbers $m$ and $N$ and an inclusion $\sF \subseteq \sO_X(mH)^{\oplus
    N}$.  Since $\mu_{\alpha}^{\max} (\sF) \leq \mu_{\alpha}^{\max}
  \bigl(\sO_X(mH)^{\oplus N}\bigr)$, we may assume without loss of generality
  for the remainder of the proof that $\sF = \sO_X(mH)^{\oplus N}$.

  We claim that the numerical class of any coherent subsheaf $\sG \subseteq \sF$
  satisfies the inequality
  \begin{equation}\label{eq:Srg}
    [\det \sG] \cdot \alpha \leq Nm[H] \cdot \alpha.    
  \end{equation}
  In fact, given any $\sG \subseteq \sF = \sO_X(mH)^{\oplus N}$, consider the
  following commutative diagram with exact rows,
  \begin{equation}\label{eq:Srk}
    \xymatrix{ %
      0 \ar[r] & \sG \ar[r] \ar@{^(->}[d] & \sO_X(mH)^{\oplus N} \ar[r] \ar@{=}[d] & \sQ \ar[r] \ar@{->>}[d] & 0 \ar@<.8cm>@{}[d]^{\txt{\small where\qquad\quad\ \\ \small $\sQ := \sF/\sG$\phantom{,} \\ \small $\sQ' := \sQ / \tor$}} \\
      0 \ar[r] & \sG' \ar[r]  & \sO_X(mH)^{\oplus N} \ar[r] & \sQ' \ar[r]  & 0.
    }
  \end{equation}
  The sheaf $\sG'$ is called ``saturation of $\sG$ in $\sO_X(mH)^{\oplus
    N}$''. The sheaf $\sG'$ and its subsheaf $\sG$ have the same
  rank. Consequently, there exists an effective divisor $D$ and an equality of
  numerical classes
  $$
  [\det \sG'] = [\det \sG] + [D].
  $$
  Since the curve class $\alpha$ is movable, this implies that $[\det \sG] \cdot
  \alpha \leq [\det \sG'] \cdot \alpha$.

  Since $H$ is assumed to be very ample, the sheaf $(\det \sQ')$ has a
  non-trivial section, and is therefore represented by an effective divisor. It
  follows that $[\det \sQ'] \cdot \alpha \geq 0$. Using that $\sQ'$ is
  torsion-free, we obtain an equality of numerical divisor classes,
  $$
  [\det \sG'] = Nm [H] - [\det \sQ'] \in N^1(X)_{\bQ},
  $$
  which yields the desired inequality of intersection numbers,
  $$ 
  [\det \sG] \cdot \alpha \leq  [\det \sG'] \cdot \alpha \leq Nm[H] \cdot \alpha.
  $$
  This shows Inequality~\eqref{eq:Srg} and finishes the proof of
  Lemma~\ref{lem:mumax}.
\end{proof}

\subsubsection{Proof of Proposition~\ref*{prop:mumax}, setup for Reducio Ad Absurdum}

It remains to show that $\mu_{\alpha}^{\max}(\sF) = \mu_{\alpha}(\sG)$, for a
suitable sheaf $\sG \subseteq \sF$.  We argue by contradiction and assume that
this is not the case.

\begin{assumption}\label{awlog:cont}
  There is no subsheaf $\sG \subseteq \sF$ such that $\mu_{\alpha}(\sG) =
  \mu_{\alpha}^{\max}(\sF)$. In particular, $\mu_{\alpha}(\sF) <
  \mu_{\alpha}^{\max}(\sF)$.
\end{assumption}

As an immediate consequence of this assumption, there exists a sequence
$(\sG_j)_{j \in \bN^+}$ of subsheaves $\sG_j \subseteq \sF$ such that the
associated sequence of slopes is increasing and converges,
$$
\lim_{j \to \infty} \mu_{\alpha}(\sG_j) = \mu_{\alpha}^{\max}(\sF) \in \bR.
$$
Given any coherent subsheaf $\sG \subseteq \sF$, its rank will come from the
bounded set $\{0, \ldots, \rank \sF \}$. As a consequence, we may assume that the
ranks of the $\sG_i$ are maximal.

\begin{awlog}[Maximality of rank]\label{awlog:maximality}
  There exists a number $r \in \bN^+$ such that the following two conditions
  hold.
  \begin{enumerate}
  \item For all $i \in \bN^+$, $\rank \sG_j = r$.
  \item Given any number $r'$ and sequence of coherent subsheaves $(\sH_i)_{i
      \in \bN^+}$ such that $\lim_{j \to \infty} \mu_{\alpha}(\sH_j) =
    \mu_{\alpha}^{\max}(\sF)$ and $\rank \sH_i = r'$ for all $i \in \bN^+$, then
    $r' \leq r$.
  \end{enumerate}
\end{awlog}

Replacing the subsheaves $\sG_i \subseteq \sF$ with their saturations, we obtain
a sequence of subsheaves of the same rank $r$, but possibly larger slopes. We
can therefore assume the following.

\begin{awlog}[Saturatedness]\label{awlog:saturated}
  The sheaves $\sG_i$ are saturated subsheaves of $\sF$ for all $i \in
  \bN^+$. In other words, the quotients $\sF/\sG_i$ are torsion-free.
\end{awlog}

None of the assumptions made so far is affected when we replace the sequence
$(\sG_i)_{i \in \bN}$ by subsequence. This allows to assume the following.

\begin{awlog}[Sequence of slopes]\label{awlog:incr}
  The sequence of slopes, $(\mu_{\alpha}(\sG_i))_{i \in \bN^+}$ is strictly
  increasing. Given any number $i \in \bN$, then
  $$
  \mu_{\alpha}(\sG_i) >  \mu_{\alpha}^{\max}(\sF) - \textstyle \frac{1}{i}.
  $$
\end{awlog}

The assumptions made so far imply that none of the sheaves $\sG_i$ is contained
in any one of the $\sG_j$, for $j > i$. We would like to thank the referee for
simplifying the originally somewhat involved argument.

\begin{lem}[Mutual containment]\label{lem:nincl}
  Given any two numbers $i < j$, then the sheaf $\sG_i$ is not contained in
  $\sG_j$.
\end{lem}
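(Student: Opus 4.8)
The plan is to argue by contradiction, combining the saturatedness hypothesis (Assumption~\ref{awlog:saturated}) with the constancy of ranks (Assumption~\ref{awlog:maximality}) to force $\sG_i = \sG_j$, which then collides with the strict monotonicity of slopes (Assumption~\ref{awlog:incr}). Note that no further use of the movable class $\alpha$ is needed here; its role was played earlier, in Lemma~\ref{lem:mumax}.

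Concretely, suppose there were indices $i < j$ with $\sG_i \subseteq \sG_j$, and form the short exact sequence
$$
0 \to \sG_i \to \sG_j \to \sQ \to 0, \qquad \sQ := \sG_j/\sG_i.
$$
By Assumption~\ref{awlog:maximality} we have $\rank \sG_i = \rank \sG_j = r$, so additivity of rank on $X$ gives $\rank \sQ = 0$; in particular $\sQ$ is a torsion sheaf. On the other hand, $\sG_i$ is a saturated subsheaf of $\sF$ by Assumption~\ref{awlog:saturated}, so $\sF/\sG_i$ is torsion-free, and the inclusions $\sG_i \subseteq \sG_j \subseteq \sF$ exhibit $\sQ$ as a subsheaf of $\sF/\sG_i$. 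A torsion subsheaf of a torsion-free sheaf vanishes, hence $\sQ = 0$ and $\sG_i = \sG_j$. But then $\mu_\alpha(\sG_i) = \mu_\alpha(\sG_j)$, contradicting the strict inequality $\mu_\alpha(\sG_i) < \mu_\alpha(\sG_j)$ furnished by Assumption~\ref{awlog:incr}. This contradiction proves the lemma.

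There is no real obstacle in this argument; the only steps that merit a word of justification are that additivity of rank forces $\rank \sQ = 0$ and that a coherent sheaf of rank zero on the integral scheme $X$ is torsion, both of which are standard. I would simply make sure to invoke Assumptions~\ref{awlog:maximality}, \ref{awlog:saturated}, and~\ref{awlog:incr} explicitly, since each is used exactly once.
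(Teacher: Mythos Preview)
Your proof is correct and follows essentially the same route as the paper's: argue by contradiction, observe that $\sG_j/\sG_i$ is torsion by rank equality (Assumption~\ref{awlog:maximality}) while $\sF/\sG_i$ is torsion-free by saturatedness (Assumption~\ref{awlog:saturated}), deduce $\sG_i = \sG_j$, and contradict the strict monotonicity of slopes (Assumption~\ref{awlog:incr}). The only difference is cosmetic---you spell out the short exact sequence and the rank-additivity step a bit more explicitly than the paper does.
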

\begin{proof}
  Argue by contradiction and assume that there exist numbers $i < j$ and
  inclusions of sheaves $\sG_i \subseteq \sG_j \subseteq \sF$. Dividing by
  $\sG_i$, we obtain an inclusion of quotients
  \begin{equation}\label{eq:incquot}
    \factor \sG_j.\sG_i. \subseteq \factor \sF.\sG_i..
  \end{equation}
  By Assumption~\ref{awlog:maximality}, both $\sG_i$ and its subsheaf $\sG_j$
  have the same rank $r$. The quotient on the left of~\eqref{eq:incquot} is thus
  a torsion sheaf. On the other hand, Assumption~\ref{awlog:saturated} implies
  that the right hand side of~\eqref{eq:incquot} is torsion free. It follows
  that the sheaf on the left must be zero, that is, $\sG_i = \sG_j$. In
  particular, we have an equality of slopes, $\mu_\alpha(\sG_i) =
  \mu_\alpha(\sG_j)$, contradicting Assumption~\ref{awlog:incr}.
\end{proof}

By Assumption~\ref{awlog:saturated}, the sheaves $\sG_i$ are saturated as
subsheaves of $\sF$. Lemma~\ref{lem:nincl} therefore has the following immediate
consequence.

\begin{consequence}\label{cons:rank}
  Given any two numbers $i < j$, let $\sG_i+\sG_j \subseteq \sF$ be the coherent
  subsheaf generated by $\sG_i$ and $\sG_j$. Then $\rank (\sG_i+\sG_j) > \rank
  \sG_i = \rank \sG_j = r$. \qed
\end{consequence}

\subsubsection{Proof of Proposition~\ref*{prop:mumax}, slope computations}

We have seen in Consequence~\ref{cons:rank} that sheaves of the form $\sG_{ij}
:= \sG_i + \sG_j$ have rank larger than $r$. The following lemma shows that
$\mu_\alpha^{\max}(\sF)$ can be approximated by $\sG_{ij}$, for $i$, $j$
sufficiently large. As we will point out in Section~\ref{ssec:popmum5} below,
this violates the Maximality Assumption~\ref{awlog:maximality}, thus finishing
the proof.

\begin{lem}\label{lem:slopecomp}
  Given any two numbers $i < j$, then 
  \begin{equation}\label{eq:slopecomp}
    \mu_{\alpha}(\sG_i + \sG_j) > \mu_{\alpha}^{\max}(\sF) - \left(\textstyle \frac{1}{i} + \frac{1}{j} \right). 
  \end{equation}
\end{lem}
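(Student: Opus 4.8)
The plan is to control $\mu_\alpha(\sG_i+\sG_j)$ by comparing the determinants of $\sG_i$, $\sG_j$, $\sG_i\cap\sG_j$ and $\sG_i+\sG_j$ through the standard exact sequence for a sum of two subsheaves, and then to cash in the rank jump recorded in Consequence~\ref{cons:rank}. Concretely, I would first write down the exact sequence of coherent sheaves
$$
0 \longrightarrow \sG_i\cap\sG_j \longrightarrow \sG_i\oplus\sG_j \longrightarrow \sG_i+\sG_j \longrightarrow 0,
$$
where $\sG_i,\sG_j$ are regarded as subsheaves of $\sF$, the map on the right sends $(a,b)$ to $a-b$, and the kernel is identified with $\sG_i\cap\sG_j$ via the diagonal. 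Since $\sF$ is torsion-free by Assumption~\ref{awlog:tors}, every sheaf appearing here is torsion-free, so all the determinants are defined, and the sequence yields both the additivity of ranks and the identity of numerical classes
$$
[\det\sG_i] + [\det\sG_j] = [\det(\sG_i\cap\sG_j)] + [\det(\sG_i+\sG_j)].
$$
Writing $r^- := \rank(\sG_i\cap\sG_j)$ and $r^+ := \rank(\sG_i+\sG_j)$, rank additivity reads $r^- + r^+ = 2r$.

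Next I would intersect this identity with the movable class $\alpha$ and solve for the term of interest. By Assumption~\ref{awlog:incr} one has $[\det\sG_i]\cdot\alpha = r\,\mu_\alpha(\sG_i) > r\bigl(\mu_\alpha^{\max}(\sF) - \tfrac1i\bigr)$, and similarly with $j$ in place of $i$. On the other hand $\sG_i\cap\sG_j$ is a coherent subsheaf of $\sF$: if it is nonzero it has rank $r^->0$ and satisfies $\mu_\alpha(\sG_i\cap\sG_j)\le\mu_\alpha^{\max}(\sF)$ by definition of $\mu_\alpha^{\max}$, while if it is zero then $[\det(\sG_i\cap\sG_j)]\cdot\alpha=0$; in both cases $[\det(\sG_i\cap\sG_j)]\cdot\alpha\le r^-\mu_\alpha^{\max}(\sF)$. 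Substituting into the determinant identity and using $2r-r^-=r^+$ then gives
$$
[\det(\sG_i+\sG_j)]\cdot\alpha = [\det\sG_i]\cdot\alpha + [\det\sG_j]\cdot\alpha - [\det(\sG_i\cap\sG_j)]\cdot\alpha > r^+\,\mu_\alpha^{\max}(\sF) - r\Bigl(\tfrac1i+\tfrac1j\Bigr).
$$
Dividing by $r^+>0$, I obtain $\mu_\alpha(\sG_i+\sG_j) > \mu_\alpha^{\max}(\sF) - \tfrac{r}{r^+}\bigl(\tfrac1i+\tfrac1j\bigr)$. Finally, Consequence~\ref{cons:rank} yields $r^+>r$, hence $0<\tfrac{r}{r^+}<1$, so the correction term is strictly smaller than $\tfrac1i+\tfrac1j$ and \eqref{eq:slopecomp} follows.

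The argument is essentially a short computation, so there is no genuine obstacle; the only point deserving care is the manipulation of determinants of the (in general non-locally-free) sheaves above. This is handled in the usual way: on the Zariski-open set where $\sF$ is locally free and $\sG_i,\sG_j$ are subbundles all the determinant identities are immediate, and its complement has codimension at least two, so it affects neither the reflexive determinant sheaves $\det(-)=(\wedge^{\rank}(-))^{**}$ nor their intersection numbers with the curve class $\alpha$.
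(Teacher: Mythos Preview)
Your argument is correct and follows essentially the same route as the paper's proof: the same short exact sequence, the same additivity of ranks and determinant classes, the same use of Assumption~\ref{awlog:incr} and the bound $\mu_\alpha(\sG_i\cap\sG_j)\le\mu_\alpha^{\max}(\sF)$, and the same final division combined with Consequence~\ref{cons:rank}. Your extra remark handling the possibility $\sG_i\cap\sG_j=0$ separately is a small bit of additional care that the paper leaves implicit.
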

\begin{proof}
  Consider the exact sequence
  $$
  0 \to \sG_i \cap \sG_j \to \sG_i \oplus \sG_j \to \sG_i + \sG_j \to 0. 
  $$
  Since all sheaves involved are torsion-free, hence in particular locally free in
  codimension one, it follows that
  \begin{align}
    \label{eq:addNC} \bigl[ \det(\sG_i + \sG_j) \bigr] & = [\det \sG_i] + [\det \sG_j] - \bigl[\det(\sG_i \cap \sG_j) \bigr], & \text{ and} \\
    \label{eq:addRK} \rank (\sG_i + \sG_j) &= \underbrace{\rank (\sG_i) + \rank (\sG_j)}_{=2r} - \rank (\sG_i \cap \sG_j).
  \end{align}
  In other words,
  \begin{multline*}
    \rank (\sG_i + \sG_j) \cdot \mu_{\alpha}(\sG_i + \sG_j) \\
    \begin{aligned}
      & = r \cdot \mu_{\alpha}(\sG_i) + r \cdot \mu_{\alpha}(\sG_j) - \rank(\sG_i \cap \sG_j) \cdot \mu_{\alpha}(\sG_i \cap \sG_j) && \text{by \eqref{eq:addNC}, \eqref{eq:d1}}\\
      & \geq r \left(\mu_{\alpha}(\sG_i) + \mu_{\alpha}(\sG_j) \right) - \rank(\sG_i \cap \sG_j) \cdot \mu_{\alpha}^{\max}(\sF) && \text{by \eqref{eq:d2}}\\
      & \geq r \left(2\mu_{\alpha}^{\max}(\sF) - \textstyle \frac{1}{i} - \frac{1}{j} \right) - \rank(\sG_i \cap \sG_j) \cdot \mu_{\alpha}^{\max}(\sF) && \text{by Assumption~\ref{awlog:incr}}\\
      & = -r \left( \textstyle \frac{1}{i} + \frac{1}{j} \right) + \rank(\sG_i + \sG_j) \cdot \mu_{\alpha}^{\max}(\sF) && \text{by \eqref{eq:addRK}}
    \end{aligned}
  \end{multline*}
  With this computation in place, Equation~\eqref{eq:slopecomp} follows from
  Consequence~\ref{cons:rank} when we divide by $\rank(\sG_i + \sG_j)$.
\end{proof}

\subsubsection{Proof of Proposition~\ref*{prop:mumax}, end of proof}
\label{ssec:popmum5}

Consider the sequence of coherent sheaves $\sH_j := \sG_j + \sG_{j+1} \subseteq
\sF$. With this definition, Lemma~\ref{lem:slopecomp} asserts that
$$
\lim_{j \to \infty} \mu_{\alpha}(\sH_j) = \mu_{\alpha}^{\max}(\sF).
$$
Passing to a suitable subsequence, we can assume that there exists a number $r'$
such that $\rank \sH_j = r'$, for all $j \in \bN^+$. We have seen in
Consequence~\ref{cons:rank} that $r' > r$, clearly contradicting the Maximality
Assumption~\ref{awlog:maximality}. The contradiction obtained concludes the proof of Proposition~\ref{prop:mumax}. \qed

\subsection{Behaviour of semistability under tensor products}
\label{ssec:AB}

In the setting of Definition~\ref{def:semistability}, if $\sL \in \Pic(X)$ is a
very ample line bundle, if $(H_i)_{1\leq i <\dim X} \in |\sL|$ are general
elements and $\alpha$ is the class of the intersection curve,
$$
\alpha = [H_1 \cap \cdots \cap H_{\dim X-1}],
$$
a classical theorem specific to characteristic zero asserts that the tensor
product of any two semistable locally free sheaves is again semistable,
cf.~\cite[Thm.~3.1.4]{HL97}. This result has been generalized to the case where
$X$ is smooth and $\alpha \in N_1(X)_{\mathbb Q}$ an arbitrary movable class.

\begin{fact}[\protect{Reflexive product preserves semistability on manifolds, \cite[Thm.~5.1 and Cor.~5.5]{CP11}}]\label{fact:SSreflTensor}
  In the setting of Definition~\ref{def:semistability}, assume additionally that
  $X$ is smooth. If $\sF$ and $\sG$ are two torsion free coherent sheaves of
  $\sO_X$-modules, then the following holds
  \begin{enumerate}
  \item\label{il:asx} $\mu_\alpha^{\max}\bigl( (\sF \otimes \sG)^{**} \bigr) =
    \mu_\alpha^{\max}(\sF) + \mu_\alpha^{\max}(\sG)$
  \item\label{il:bsx} If $\sF$ and $\sG$ are $\alpha$-semistable, then $(\sF
    \otimes \sG)^{**}$ is likewise $\alpha$-semistable. \qed
  \end{enumerate}
\end{fact}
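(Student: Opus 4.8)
The plan is to derive the semistability statement (the second assertion) from the slope formula (the first), and to prove the latter by two opposite inequalities. The second assertion is formal: if $\sF$ and $\sG$ are $\alpha$-semistable, then $\mu_\alpha^{\max}(\sF)=\mu_\alpha(\sF)$ and $\mu_\alpha^{\max}(\sG)=\mu_\alpha(\sG)$, while $\det(\sF\otimes\sG)^{**}\cong(\det\sF)^{\otimes\rank\sG}\otimes(\det\sG)^{\otimes\rank\sF}$ forces $\mu_\alpha\bigl((\sF\otimes\sG)^{**}\bigr)=\mu_\alpha(\sF)+\mu_\alpha(\sG)$; inserting these into the first assertion yields $\mu_\alpha^{\max}\bigl((\sF\otimes\sG)^{**}\bigr)=\mu_\alpha\bigl((\sF\otimes\sG)^{**}\bigr)$, i.e.\ $\alpha$-semistability. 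So everything is concentrated in the first assertion.

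For the inequality ``$\geq$'' I would apply Proposition~\ref{prop:mumax} on the smooth variety $X$ to pick subsheaves $\sF'\subseteq\sF$ and $\sG'\subseteq\sG$ with $\mu_\alpha(\sF')=\mu_\alpha^{\max}(\sF)$ and $\mu_\alpha(\sG')=\mu_\alpha^{\max}(\sG)$. On the Zariski-open set $U\subseteq X$ over which $\sF$, $\sG$, $\sF'$ and $\sG'$ are all locally free --- its complement has codimension $\geq 2$, since a torsion-free sheaf on a smooth variety is locally free off a set of codimension $\geq 2$ --- the natural map $\sF'|_U\otimes\sG'|_U\to\sF|_U\otimes\sG|_U$ is injective, because a tensor product of injections of vector bundles is injective by flatness. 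As morphisms between reflexive sheaves are determined by their restriction to $U$, this extends to an injection $(\sF'\otimes\sG')^{**}\hookrightarrow(\sF\otimes\sG)^{**}$ whose source has slope $\mu_\alpha^{\max}(\sF)+\mu_\alpha^{\max}(\sG)$ by the determinant formula above.

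The substance lies in ``$\leq$''. I would first reduce to the case that $\sF$ and $\sG$ are themselves $\alpha$-semistable: the Harder--Narasimhan filtrations of $\sF$ and of $\sG$ with respect to $\alpha$ exist by the boundedness packaged in Proposition~\ref{prop:mumax} together with the standard induction, and after shrinking $U$ so that all steps become subbundles, the tensor filtration of $(\sF\otimes\sG)^{**}|_U\cong\sF|_U\otimes\sG|_U$ has graded pieces $(\sF_i/\sF_{i-1}\otimes\sG_j/\sG_{j-1})^{**}|_U$ of slope $\leq\mu_\alpha^{\max}(\sF)+\mu_\alpha^{\max}(\sG)$. Since $\mu_\alpha^{\max}$ depends only on the restriction to $U$ and is bounded above by the largest $\mu_\alpha^{\max}$ among the graded pieces of a filtration, it suffices to know that each $(\sF_i/\sF_{i-1}\otimes\sG_j/\sG_{j-1})^{**}$ is $\alpha$-semistable. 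This reduces the problem to the assertion that the reflexive tensor product of two $\alpha$-semistable torsion-free sheaves on a smooth projective variety is $\alpha$-semistable. For a complete-intersection class $\alpha=[H_1\cap\cdots\cap H_{n-1}]$ of very ample divisors this is classical in characteristic zero: a general such curve $C$ avoids the non-locally-free loci, the restrictions $\sF|_C$ and $\sG|_C$ are semistable by the Mehta--Ramanathan restriction theorem, their tensor product is semistable on $C$ by \cite[Thm.~3.1.4]{HL97}, and restricting a hypothetical destabilising subsheaf of $(\sF\otimes\sG)^{**}$ to a general $C$ then yields $\alpha$-semistability on $X$.

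It remains to pass from complete-intersection classes to an arbitrary movable $\alpha$, and this is where the genuine work and the main obstacle lie. Following \cite[Thm.~5.1 and Cor.~5.5]{CP11}, I would use the structure theorem for the movable cone of curves (Boucksom--Demailly--P{\u a}un--Peternell) to approximate $\alpha$ by strongly movable classes $\pi_{k,*}(\widetilde H_1\cdots\widetilde H_{n-1})$ on birational models $\pi_k\colon X_k\to X$, transport the sheaves and their slopes to $X_k$, apply the complete-intersection case there, and pass to the limit, using that $\alpha\mapsto\mu_\alpha^{\max}$ is a supremum of linear functionals on $N_1(X)_\bQ$ and hence convex and lower semicontinuous. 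The delicate points are that a reflexive sheaf need not stay reflexive under $\pi_k^*$ and that determinants acquire exceptional contributions --- harmless precisely because $\alpha$ is movable --- and that the set of classes for which a fixed sheaf is semistable is only closed and convex, not open, so that the limiting argument must be set up with care. This is the technical heart of \cite{CP11}.
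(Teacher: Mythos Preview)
The paper does not prove this statement at all: it is recorded as a \emph{Fact} with a bare citation to \cite[Thm.~5.1 and Cor.~5.5]{CP11} and closed with a \qed. There is therefore no proof in the paper to compare your proposal against; the authors simply import the result as a black box and then, in Proposition~\ref{prop:SSreflTensor}, reduce the singular $\bQ$-factorial case to this smooth case via a resolution and the invariance lemmas of Section~\ref{ssec:AC}.

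That said, your sketch is a faithful outline of the argument in \cite{CP11}: the formal deduction of (ii) from (i), the easy ``$\geq$'' via maximal destabilising subsheaves, the Harder--Narasimhan reduction to the semistable case, the classical complete-intersection case via Mehta--Ramanathan and \cite[Thm.~3.1.4]{HL97}, and finally the passage to arbitrary movable classes via the BDPP description of the movable cone. You have also correctly flagged the genuine technical issues (behaviour of reflexivity and determinants under birational pull-back, and the semicontinuity needed for the limiting step). If you want to turn this into a self-contained proof rather than a citation, those last two points are exactly where the work lies; otherwise, for the purposes of this paper, the citation suffices.
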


We generalize Fact~\ref{fact:SSreflTensor} to the singular case.

\begin{prop}[Reflexive tensor operations preserve semistability on $\mathbb Q$-factorial spaces]\label{prop:SSreflTensor}
  In the setting of Definition~\ref{def:semistability}, let $\sF$ and $\sG$ be
  two coherent sheaves of $\sO_X$-modules that are torsion free in codimension
  one. Then the following holds.
  \begin{enumerate}
  \item\label{il:p161} $\mu_\alpha^{\max}\bigl( (\sF \otimes \sG)^{**} \bigr)
    = \mu_\alpha^{\max}(\sF) +\mu_\alpha^{\max}(\sG)$
  \item\label{il:p162} If $\sF$ and $\sG$ are $\alpha$-semistable, then $(\sF
    \otimes \sG)^{**}$ is $\alpha$-semistable.
  \item\label{il:p163} If $\sF$ is $\alpha$-semistable, then $\Sym^{[q]} \sF :=
    \bigl( \Sym^q \sF \bigr)^{**}$ and $\bigwedge^{[q]} \sF := \bigl(
    \bigwedge^q \sF \bigr)^{**}$ are $\alpha$-semistable, for all $q$.
  \end{enumerate}
\end{prop}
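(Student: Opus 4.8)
The plan is to reduce the statement to the smooth case, Fact~\ref{fact:SSreflTensor}, by pulling everything back along a resolution of singularities; the real work lies in transporting the movable class $\alpha$ and the associated slopes across the resolution, and this is where the $\mathbb Q$-factoriality of $X$ enters in an essential way. As a preliminary reduction, replacing $\sF$ by $\sF/\tor$ and $\sG$ by $\sG/\tor$ changes neither the rank, the determinant class, nor the reflexive hull $(\sF\otimes\sG)^{**}$ --- since the sheaves are torsion free in codimension one --- so I may assume $\sF$ and $\sG$ torsion free. Now fix a projective resolution $\pi\colon\wtilde X\to X$ which is an isomorphism over $X_{\reg}$, with $\pi$-exceptional prime divisors $E_1,\dots,E_k$. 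Since $X$ is normal and $\mathbb Q$-factorial, there is a direct sum decomposition $N^1(\wtilde X)_{\mathbb Q}=\pi^*N^1(X)_{\mathbb Q}\oplus\bigoplus_i\mathbb Q\,E_i$, and non-degeneracy of the intersection pairing on $\wtilde X$ yields a unique class $\wtilde\alpha\in N_1(\wtilde X)_{\mathbb Q}$ with $\wtilde\alpha\cdot\pi^*\gamma=\alpha\cdot\gamma$ for all $\gamma\in N^1(X)_{\mathbb Q}$ and $\wtilde\alpha\cdot E_i=0$ for all $i$. One checks that $\pi_*\wtilde\alpha=\alpha$, and that $\wtilde\alpha$ is again movable: for an effective divisor $\wtilde D$ on $\wtilde X$ we may write $\wtilde D\equiv\pi^*(\pi_*\wtilde D)+\sum_i c_iE_i$, whence $\wtilde\alpha\cdot\wtilde D=\alpha\cdot\pi_*\wtilde D\geq 0$, because $\pi_*\wtilde D$ is an effective $\mathbb Q$-Cartier divisor on $X$ and $\alpha$ is movable on $X$.

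The technical core is the following transfer principle: for every coherent sheaf $\sE$ on $X$ that is torsion free in codimension one, set $\pi^{[*]}\sE:=(\pi^*\sE)^{**}$; then $\mu_{\wtilde\alpha}(\pi^{[*]}\sE)=\mu_\alpha(\sE)$ and $\mu_{\wtilde\alpha}^{\max}(\pi^{[*]}\sE)=\mu_\alpha^{\max}(\sE)$. The first identity holds because $\pi^{[*]}\sE$ and $\sE$ have the same rank, and because $\det\pi^{[*]}\sE$ and $\pi^*\det\sE$ agree over the open set where $\pi$ is an isomorphism and $\sE$ is locally free (a set whose complement in $X$ has codimension at least two), hence differ only by a divisor supported on $E_1\cup\dots\cup E_k$; since $\wtilde\alpha$ annihilates that divisor, the projection formula gives $[\det\pi^{[*]}\sE]\cdot\wtilde\alpha=[\det\sE]\cdot\alpha$. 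For the second identity, subsheaves of $\pi^{[*]}\sE$ and subsheaves of $\sE$ correspond to one another by restriction to, and reflexive extension across, the isomorphism locus of $\pi$; after saturating, corresponding subsheaves again have equal rank and determinant classes differing only by exceptional divisors, so their $\wtilde\alpha$- respectively $\alpha$-slopes coincide, and taking suprema gives the claim. Proposition~\ref{prop:mumax}, already available, guarantees that these suprema are attained, which keeps the bookkeeping clean.

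Granting this, Item~(\ref{prop:SSreflTensor}.\ref{il:p161}) is immediate. Over the preimage of the open set where $X$ is smooth and $\sF,\sG$ are locally free, the reflexive sheaves $\pi^{[*]}\bigl((\sF\otimes\sG)^{**}\bigr)$ and $(\pi^{[*]}\sF\otimes\pi^{[*]}\sG)^{**}$ both coincide with $\pi^*\sF\otimes\pi^*\sG$, so they agree away from a union of exceptional divisors and a codimension-two set; by the same exceptional-divisor argument their $\mu_{\wtilde\alpha}^{\max}$ coincide. Applying Fact~\ref{fact:SSreflTensor}.(\ref{il:asx}) on the smooth space $\wtilde X$ to the torsion free sheaves $\pi^{[*]}\sF$ and $\pi^{[*]}\sG$, and translating each of the three terms back to $X$ via the transfer principle, yields $\mu_\alpha^{\max}\bigl((\sF\otimes\sG)^{**}\bigr)=\mu_\alpha^{\max}(\sF)+\mu_\alpha^{\max}(\sG)$. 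Item~(\ref{prop:SSreflTensor}.\ref{il:p162}) follows since a determinant computation on the locally free locus gives $\mu_\alpha\bigl((\sF\otimes\sG)^{**}\bigr)=\mu_\alpha(\sF)+\mu_\alpha(\sG)$, which for semistable $\sF,\sG$ equals $\mu_\alpha^{\max}(\sF)+\mu_\alpha^{\max}(\sG)=\mu_\alpha^{\max}\bigl((\sF\otimes\sG)^{**}\bigr)$. For Item~(\ref{prop:SSreflTensor}.\ref{il:p163}) I would iterate Item~(\ref{prop:SSreflTensor}.\ref{il:p162}) to see that $(\sF^{\otimes q})^{**}$ is $\alpha$-semistable of slope $q\,\mu_\alpha(\sF)$, observe that in characteristic zero $\bigwedge^q$ and $\Sym^q$ are direct summands of the $q$-th tensor power, so $\bigwedge^{[q]}\sF$ and $\Sym^{[q]}\sF$ embed as subsheaves of $(\sF^{\otimes q})^{**}$ and therefore satisfy $\mu_\alpha^{\max}\le q\,\mu_\alpha(\sF)$; and finally the elementary identities $[\det\bigwedge^{[q]}\sF]=\binom{r-1}{q-1}[\det\sF]$, $\rank\bigwedge^{[q]}\sF=\binom rq$ (and their symmetric analogues, $r=\rank\sF$) show these slopes equal $q\,\mu_\alpha(\sF)$ exactly, forcing semistability.

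The step I expect to cause the most trouble is the transfer principle, and in particular the equality $\mu_{\wtilde\alpha}^{\max}(\pi^{[*]}\sE)=\mu_\alpha^{\max}(\sE)$: one must check carefully that the passage between subsheaves upstairs and downstairs is compatible with saturation and with the formation of $\mathbb Q$-Cartier determinant classes, and that the only discrepancy between $\det\pi^{[*]}(-)$ and $\pi^*\det(-)$ is supported on the exceptional divisors --- which is precisely why $\wtilde\alpha$ was arranged to vanish on them. Everything else is routine manipulation of reflexive hulls and of loci of codimension at least two.
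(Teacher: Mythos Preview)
Your proposal is correct and follows essentially the same route as the paper: construct the numerical pull-back $\wtilde\alpha$ of the movable class (the paper does this by dualising the push-forward $\pi_*\colon N^1(\wtilde X)_{\bQ}\to N^1(X)_{\bQ}$, which amounts to your characterisation via $\wtilde\alpha\cdot E_i=0$), establish the transfer principle $\mu_{\wtilde\alpha}^{\max}(\pi^{[*]}\sE)=\mu_\alpha^{\max}(\sE)$, and then invoke the smooth case. The paper packages the transfer principle into a sequence of short lemmas (invariance under modifications along $E$, under push-forward, under reflexive pull-back) and uses Grothendieck's extension theorem for coherent subsheaves to move subsheaves across $\pi$; your ``restriction and reflexive extension across the isomorphism locus'' is the same manoeuvre. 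For Item~(\ref{prop:SSreflTensor}.\ref{il:p162}) you deduce semistability directly from the identity $\mu_\alpha=\mu_\alpha^{\max}$ once (\ref{prop:SSreflTensor}.\ref{il:p161}) is in hand, whereas the paper transfers semistability itself through $\pi$ and quotes the smooth case again --- both are fine, and yours is marginally shorter. For Item~(\ref{prop:SSreflTensor}.\ref{il:p163}) both arguments use the direct-summand decomposition of $(\sF^{\otimes q})^{**}$ in characteristic zero.
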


To prove Proposition~\ref{prop:SSreflTensor}, we choose a resolution of
singularities, $\pi : \widetilde X \to X$, and compare semistability of sheaves
on $X$ with semistability of their reflexive pull-back sheaves. The proof of
Proposition~\ref{prop:SSreflTensor}, given on page~\pageref{pf:prop14} below, is
essentially a combination of the invariance lemmas shown in the next section.

\subsection{Invariance properties}
\label{ssec:AC}

Movable curve classes are more flexible than complete intersection curves: given
a singular space $X$ and a resolution of singularities, $\pi : \wtilde X \to X$,
there exists a meaningful notion of pull-back that maps a movable curve class on
$X$ to one on $\wtilde X$. The results of this section discuss stability with
respect to pull-back sheaves and relate stability on $X$ with that on $\wtilde
X$. The following construction is crucial.

\begin{construction}\label{const:pB}
  Let $X$ be a normal, $\mathbb Q$-factorial projective variety and $\pi :
  \wtilde X \to X$ a resolution of singularities. The push-forward map of
  divisors respects $\mathbb Q$-linear and numerical equivalence and therefore
  induces a surjective $\mathbb Q$-linear map
  $$
  \pi_* : N^1(\wtilde X)_{\bQ} \to N^1(X)_{\bQ}.
  $$
  Using the non-degenerate pairing~\eqref{eq:blp}, the dual of $\pi_*$ gives
  rise to an injective map,
  \begin{equation}\label{eq:pb}
    \pi^* : N_1(X)_{\bQ} \to N_1(\wtilde X)_{\bQ},    
  \end{equation}
  which clearly satisfies the projection formula,
  \begin{equation}\label{eq:projection}
    \pi^*(\alpha).\beta = \alpha.\pi_*(\beta), \quad \text{for all
      $\alpha \in N_1(X)_{\bQ}$ and $\beta \in N^1(\wtilde X)_{\bQ}$.}
  \end{equation}
\end{construction}

\begin{rem}
  The map $\pi^*$ of Construction~\ref{const:pB} appears in the literature under
  the name ``numerical pull-back''.
\end{rem}

\begin{lem}
  In the setup of Construction~\ref{const:pB}, if $\alpha \in N_1(X)_{\mathbb
    Q}$ is any class, then $\alpha$ is movable if and only if $\pi^*(\alpha)$ is
  movable.
\end{lem}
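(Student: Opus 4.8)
The plan is to argue that movability of a curve class is detected by intersection with effective Cartier divisors, and to transport this characterization across $\pi$ using the projection formula~\eqref{eq:projection} together with the interaction between push-forward and pull-back of divisors. First I would fix the definitions: a class $\alpha \in N_1(X)_{\bQ}$ is movable precisely when $\alpha . [D] \geq 0$ for every effective Cartier divisor $D$ on $X$, and likewise $\pi^*(\alpha)$ is movable on $\wtilde X$ precisely when $\pi^*(\alpha) . [\wtilde D] \geq 0$ for every effective Cartier divisor $\wtilde D$ on $\wtilde X$. The key observation is that since $X$ is normal, every effective Cartier divisor $D$ on $X$ has an effective total transform, and more usefully, the push-forward $\pi_*(\wtilde D)$ of an effective divisor $\wtilde D$ on $\wtilde X$ is an effective Weil divisor on $X$; since $X$ is $\bQ$-factorial, it is $\bQ$-Cartier, hence defines a class $\pi_*[\wtilde D] \in N^1(X)_{\bQ}$ that is represented by an effective divisor.

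For the direction ``$\pi^*(\alpha)$ movable $\Rightarrow$ $\alpha$ movable'': given an effective Cartier divisor $D$ on $X$, its strict transform (or any effective divisor in the class $\pi^*[D]$, noting $\pi^*[D]$ is effective since $\pi$ is birational and $D$ effective) is an effective divisor $\wtilde D$ on $\wtilde X$ with $\pi_*[\wtilde D] = [D]$; applying~\eqref{eq:projection} gives $\alpha . [D] = \alpha . \pi_*[\wtilde D] = \pi^*(\alpha) . [\wtilde D] \geq 0$. For the converse ``$\alpha$ movable $\Rightarrow$ $\pi^*(\alpha)$ movable'': let $\wtilde D$ be an effective Cartier divisor on $\wtilde X$. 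Then $\pi_*[\wtilde D] \in N^1(X)_{\bQ}$ is the class of an effective $\bQ$-divisor (here $\bQ$-factoriality of $X$ is used), so $\alpha . \pi_*[\wtilde D] \geq 0$ by movability of $\alpha$; by the projection formula this equals $\pi^*(\alpha) . [\wtilde D] \geq 0$. Since $\wtilde D$ was arbitrary, $\pi^*(\alpha)$ is movable.

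The main obstacle I anticipate is purely bookkeeping rather than conceptual: one must be careful that the ``effective divisor in the class $\pi^*[D]$'' used in the first direction genuinely exists as a $\bQ$-effective divisor (the total transform $\pi^*D$ works when $D$ is $\bQ$-Cartier, which it is, being Cartier), and that $\pi_*$ of an effective divisor on $\wtilde X$ lands in the effective cone of $N^1(X)_{\bQ}$ — which requires $\bQ$-factoriality so that the Weil divisor $\pi_*\wtilde D$ has a well-defined numerical class at all. Once these two compatibilities between $\pi_*$, $\pi^*$ on divisors and the numerical pull-back $\pi^*$ on curve classes are pinned down, the equivalence follows immediately from~\eqref{eq:projection}. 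No deeper input (no vanishing theorems, no Hodge theory) is needed.
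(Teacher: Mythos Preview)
Your proposal is correct and follows essentially the same approach as the paper: the paper's one-line proof simply invokes the fact that push-forward and strict transform of effective divisors are effective, which is exactly what you use (together with the projection formula~\eqref{eq:projection}) to run the two directions. Your write-up merely makes explicit the role of $\bQ$-factoriality in ensuring that $\pi_*\wtilde D$ defines a class in $N^1(X)_{\bQ}$, which the paper leaves implicit.
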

\begin{proof}
  The statement follows immediately from the fact that the push-forward and
  strict transform of any effective divisor is always effective.
\end{proof}

We maintain the following setting throughout the remainder of the present
Section~\ref{ssec:AC}.

\begin{setting}\label{set:AC}
  In the setup of Definition~\ref{def:semistability}, let $\pi: \wtilde X \to X$
  be a resolution of singularities. Let $E \subset \wtilde X$ be the
  $\pi$-exceptional divisor, that is, the codimension-one part of the
  $\pi$-exceptional locus. Note that $E$ will be zero if the resolution map is
  small. Finally, set $\wtilde \alpha := \pi^*(\alpha)$, where $\pi^*$ is the
  pull-back map~\eqref{eq:pb}.
\end{setting}

\begin{lem}[Invariance of slope and semi-stability under modifications along the exceptional set]\label{lem:18}
  In Setting~\ref{set:AC}, let $\wtilde \sF$ and $\wtilde \sG$ be two coherent
  sheaves of $\sO_{\wtilde X}$-modules which are torsion free in codimension
  one. Assume that $\wtilde \sF$ and $\wtilde \sG$ are isomorphic outside of the
  $\pi$-exceptional set. Then $\mu_{\wtilde \alpha}(\wtilde \sF) = \mu_{\wtilde
    \alpha}(\wtilde \sG)$.
\end{lem}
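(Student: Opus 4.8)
The plan is to reduce the equality of slopes to the projection formula~\eqref{eq:projection}, together with the elementary fact that the class of any $\pi$-exceptional divisor vanishes under push-forward. Write $U := \wtilde X \setminus \Exc(\pi)$ for the complement of the $\pi$-exceptional set; this is a dense Zariski-open subset whose complement has $E$ as its codimension-one part, and by hypothesis $\wtilde\sF|_U \cong \wtilde\sG|_U$. In particular $\wtilde\sF$ and $\wtilde\sG$ have the same rank, say $r$, so both slopes carry the same denominator and it suffices to show that $[\det\wtilde\sF]\cdot\wtilde\alpha = [\det\wtilde\sG]\cdot\wtilde\alpha$.

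First I would control the difference of the two determinant classes. Since $\det\wtilde\sF$ and $\det\wtilde\sG$ are reflexive of rank one, they are divisorial sheaves, say $\det\wtilde\sF \cong \sO_{\wtilde X}(D_\sF)$ and $\det\wtilde\sG \cong \sO_{\wtilde X}(D_\sG)$ for Weil divisors $D_\sF, D_\sG$. The isomorphism over $U$ forces $D_\sF|_U$ and $D_\sG|_U$ to be linearly equivalent, hence, by the excision sequence for divisor class groups
$$
\bigoplus_i \bZ\cdot E_i \longrightarrow \operatorname{Cl}(\wtilde X) \longrightarrow \operatorname{Cl}(U) \longrightarrow 0
$$
(the sum running over the irreducible components $E_i$ of $E$, which are precisely the prime divisors contained in $\Exc(\pi)$), the divisor $D_\sF - D_\sG$ is linearly equivalent to an integral combination $\sum_i c_i E_i$. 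Consequently
$$
[\det\wtilde\sF] - [\det\wtilde\sG] = \sum_i c_i\,[E_i] \in N^1(\wtilde X)_{\bQ}.
$$

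Finally I would intersect with $\wtilde\alpha = \pi^*(\alpha)$ and invoke the projection formula~\eqref{eq:projection}: for each component $E_i$ of $E$ one has $[E_i]\cdot\pi^*(\alpha) = \pi_*[E_i]\cdot\alpha = 0$, because $E_i$ is $\pi$-exceptional, so $\pi(E_i)$ has codimension at least two in $X$ and hence $\pi_*[E_i] = 0$ in $N^1(X)_{\bQ}$. Dividing by $r$ then gives $\mu_{\wtilde\alpha}(\wtilde\sF) = \mu_{\wtilde\alpha}(\wtilde\sG)$, as claimed. I do not expect any real obstacle; the only point requiring a little care is the middle step — tracking the passage from the reflexive determinant to a Weil-divisor representative and then to a numerical class — and this is exactly where the standing hypothesis that $\wtilde\sF$ and $\wtilde\sG$ are torsion free in codimension one is used, ensuring that $\rank$ and $\det$ are well defined and that $\det$ is a divisorial sheaf.
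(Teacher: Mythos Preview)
Your proposal is correct and follows essentially the same approach as the paper: both arguments observe that $\det\wtilde\sF$ and $\det\wtilde\sG$ differ by an integral combination of the $\pi$-exceptional components $E_i$, then apply the projection formula together with $\pi_*[E_i]=0$ to conclude. You are simply a bit more explicit about the intermediate steps (equality of ranks, the excision sequence for $\operatorname{Cl}$), which the paper leaves implicit since $\wtilde X$ is smooth and the determinants are honest line bundles.
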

\begin{proof}
  Observe that the line bundles $\det \sF$ and $\det \sG$ agree outside of
  $E$. Denoting the irreducible components of $E$ by $E_i$, we can therefore
  write
  $$
  \det \sF \cong \bigl( \det \sG \bigr) \otimes \sO_{\wtilde X} \bigl(
  \textstyle{\sum_i} \lambda_i E_i \bigr), \quad \text{for suitable $\lambda_i
    \in \bZ$}.
  $$
  Since $\pi_*(E_i) = 0$ for all $i$, the equality of slopes then follows from
  the projection formula~\eqref{eq:projection}.
\end{proof}

\begin{lem}[Invariance of slope under push-forward]\label{lem:x7}
  In Setting~\ref{set:AC}, let $\wtilde \sF$ be any coherent sheaf on $\wtilde
  X$ which is torsion free in codimension one.
  \begin{enumerate}
  \item\label{il:lemx71} The push-forward $\pi_* \wtilde \sF$ is again torsion
    free in codimension one.

  \item\label{il:lemx72} We have equality of slopes, $\mu_{\wtilde
      \alpha}(\wtilde \sF) = \mu_{\alpha}\bigl( \pi_* \wtilde \sF \bigr)$.
  \end{enumerate}
\end{lem}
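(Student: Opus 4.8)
The plan is to reduce every assertion to the behaviour of the sheaves over the big open subset of $X$ over which $\pi$ is an isomorphism. First I would record two standing facts. Since $\pi$ is projective, $\pi_*\wtilde\sF$ is again coherent. Since $\pi$ is a resolution of the normal variety $X$, there is a dense Zariski-open subset $U\subseteq X$ with $\codim_X(X\setminus U)\geq 2$ such that $\pi$ restricts to an isomorphism $\pi^{-1}(U)\xrightarrow{\ \sim\ } U$, and so that no irreducible component of the $\pi$-exceptional divisor $E$ meets $\pi^{-1}(U)$. Throughout I identify $U$ with $\pi^{-1}(U)$; under this identification $(\pi_*\wtilde\sF)\big|_U\cong \wtilde\sF\big|_U$, because pushing a sheaf forward along an isomorphism does nothing.

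For Assertion~(\ref{lem:x7}.\ref{il:lemx71}): the torsion subsheaf of a coherent sheaf commutes with restriction to open subsets, so $\tor(\pi_*\wtilde\sF)\big|_U = \tor\bigl((\pi_*\wtilde\sF)\big|_U\bigr)\cong \tor\bigl(\wtilde\sF\big|_U\bigr)=(\tor\wtilde\sF)\big|_U$. Since $\wtilde\sF$ is torsion free in codimension one, this last sheaf has support of codimension $\geq 2$ in $U$; as $X\setminus U$ also has codimension $\geq 2$ in $X$, the support of $\tor(\pi_*\wtilde\sF)$ has codimension $\geq 2$, which is exactly the claim. (Alternatively: since $\pi$ is dominant, a local section of $\pi_*\wtilde\sF$ killed by a nonzero $g\in\sO_X$ is killed by the nonzero function $\pi^*g$, hence is a torsion section of $\wtilde\sF$; thus $\tor(\pi_*\wtilde\sF)\subseteq \pi_*(\tor\wtilde\sF)$, whose support is contained in $\pi(\supp\tor\wtilde\sF)$, a closed set of dimension $\leq \dim\wtilde X-2=\dim X-2$.)

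For Assertion~(\ref{lem:x7}.\ref{il:lemx72}): put $r:=\rank\wtilde\sF$. Since the generic point of $X$ lies in $U$ and the two sheaves agree over $U$, also $\rank\pi_*\wtilde\sF = r$. Now $\det\pi_*\wtilde\sF=(\wedge^r\pi_*\wtilde\sF)^{**}$ is a reflexive rank-one sheaf on the normal variety $X$, hence of the form $\sO_X(D)$ for some Weil divisor $D$; likewise $\det\wtilde\sF=\sO_{\wtilde X}(\wtilde D)$ for a Weil (in fact Cartier) divisor $\wtilde D$ on $\wtilde X$. Restricting to $U$ and using $(\pi_*\wtilde\sF)\big|_U\cong\wtilde\sF\big|_U$ gives $\sO_X(D)\big|_U\cong \sO_{\wtilde X}(\wtilde D)\big|_U$. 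On the other hand, the cycle-theoretic push-forward $\pi_*\wtilde D$ is a Weil divisor on $X$ whose restriction to $U$ equals $\wtilde D\big|_{\pi^{-1}(U)}$: the components of $\wtilde D$ contributing to $\pi_*\wtilde D$ over $U$ are precisely the non-exceptional ones, and over $U$ these are transported isomorphically. Hence $\sO_X(\pi_*\wtilde D)$ and $\sO_X(D)$ are reflexive sheaves with isomorphic restrictions to the big open set $U$, and therefore isomorphic. Consequently $[\det\pi_*\wtilde\sF]=[D]=[\pi_*\wtilde D]=\pi_*[\det\wtilde\sF]$ in $N^1(X)_{\bQ}$, where $\pi_*$ is the numerical push-forward of divisor classes from Construction~\ref{const:pB}. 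Combining this identity with the projection formula~\eqref{eq:projection} and the definition $\wtilde\alpha=\pi^*\alpha$ from Setting~\ref{set:AC},
$$
\mu_{\alpha}(\pi_*\wtilde\sF)=\frac{[\det\pi_*\wtilde\sF].\alpha}{r}
=\frac{\pi_*[\det\wtilde\sF].\alpha}{r}
=\frac{[\det\wtilde\sF].\pi^*\alpha}{r}
=\frac{[\det\wtilde\sF].\wtilde\alpha}{r}
=\mu_{\wtilde\alpha}(\wtilde\sF).
$$

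The one step that requires genuine care is the identification $[\det\pi_*\wtilde\sF]=\pi_*[\det\wtilde\sF]$. It relies on two facts: a reflexive rank-one sheaf on a normal variety is determined by its restriction to any open subset with complement of codimension $\geq 2$; and $\pi$-exceptional divisors are invisible both to the numerical push-forward $\pi_*$ and to $\pi^{-1}$ of the big open set $U$. Granting this, the rest of the argument is routine sheaf bookkeeping together with a single application of the projection formula.
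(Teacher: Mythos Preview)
Your proof is correct and follows essentially the same approach as the paper: for (\ref{lem:x7}.\ref{il:lemx71}) you argue that torsion of $\pi_*\wtilde\sF$ can only live in codimension two, and for (\ref{lem:x7}.\ref{il:lemx72}) you establish $[\det\pi_*\wtilde\sF]=\pi_*[\det\wtilde\sF]$ by comparing reflexive rank-one sheaves over the big open set where $\pi$ is an isomorphism, then apply the projection formula~\eqref{eq:projection}. The paper's argument is terser but identical in substance, the only cosmetic difference being that it phrases the codimension-one comparison in terms of the sheaf $\pi_*\det\wtilde\sF$ rather than the divisor $\pi_*\wtilde D$.
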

\begin{proof}
  The push-forward of any torsion free sheaf under a surjective map is again
  torsion free.  Claim (\ref{lem:x7}.\ref{il:lemx71}) thus follows from the
  observation that the image of any codimension-two set is again of codimension
  two.

  For the second claim, observe that the sheaves $\pi_* \det \wtilde \sF$ and
  $\det \pi_* \wtilde \sF$ are both torsion free of rank one and agree outside
  the singular set of $X$. Consequently, we have an equality of numerical
  classes on $X$,
  $$
  \pi_* [ \det \wtilde \sF] = [\pi_* \det \wtilde \sF] = [\det \pi_* \wtilde
  \sF] \in N^1(X)_{\bQ}.
  $$
  Equality of (\ref{lem:x7}.\ref{il:lemx72}) thus follows from the projection
  formula~\eqref{eq:projection}.
\end{proof}

\begin{lem}[Invariance of slope under pull-back]\label{lem:17}
  In Setting~\ref{set:AC}, let $\sF$ be any coherent sheaf on $X$ that is
  torsion free in codimension one, and consider its reflexive pull-back
  $\pi^{[*]} \sF := (\pi^* \sF)^{**}$. Then we have equality of slopes,
  $\mu_{\wtilde \alpha}\bigl( \pi^{[*]} \sF \bigr) = \mu_\alpha(\sF)$.
\end{lem}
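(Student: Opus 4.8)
The plan is to deduce the equality of slopes directly from the two invariance results already available in this section, in particular Lemma~\ref{lem:x7} on push-forwards, so as to avoid any computation with numerical pull-back of divisor classes.

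First I would record that $\pi^{[*]}\sF = (\pi^*\sF)^{**}$ is reflexive on the smooth variety $\wtilde X$, hence torsion free, and in particular torsion free in codimension one; by~(\ref{lem:x7}.\ref{il:lemx71}) its push-forward $\pi_*\pi^{[*]}\sF$ is then again torsion free in codimension one, so that the slope $\mu_\alpha(\pi_*\pi^{[*]}\sF)$ is defined. Item~(\ref{lem:x7}.\ref{il:lemx72}) gives $\mu_{\wtilde\alpha}(\pi^{[*]}\sF) = \mu_\alpha(\pi_*\pi^{[*]}\sF)$, so it remains only to identify $\mu_\alpha(\pi_*\pi^{[*]}\sF)$ with $\mu_\alpha(\sF)$.

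For this I would compare the two sheaves over the big open subset $U \subseteq X$ on which $X$ is smooth and $\sF$ is locally free; since $X$ is normal and $\sF$ is torsion free in codimension one, the complement $X \setminus U$ has codimension at least two. Over $\wtilde U := \pi^{-1}(U)$ the pull-back $\pi^*\sF$ is locally free, so that $\pi^{[*]}\sF|_{\wtilde U} = \pi^*\sF|_{\wtilde U}$; the projection formula together with $\pi_*\sO_{\wtilde X} = \sO_X$ (valid since $X$ is normal and $\pi$ is a resolution of singularities) then yields $(\pi_*\pi^{[*]}\sF)|_U \cong \sF|_U$. As $\pi_*\pi^{[*]}\sF$ and $\sF$ are both torsion free in codimension one and agree over the big open set $U$, they share the same rank and the same reflexive determinant $(\wedge^{\rank}{-})^{**}$, hence the same $\alpha$-slope, which finishes the proof.

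The argument is essentially formal; the only point that requires a moment's thought is the identification $(\pi_*\pi^{[*]}\sF)|_U \cong \sF|_U$, where one uses crucially that over the \emph{smooth, locally free} locus $U$ the reflexive hull is invisible, so that $\pi^{[*]}\sF$ restricts to the ordinary pull-back there, and that $\pi$ restricts over $U$ to a proper birational morphism onto a normal variety, whence $\pi_*\sO_{\wtilde U} = \sO_U$. Alternatively one could argue by hand, noting that $\det\pi^{[*]}\sF$ differs from the strict transform of a Weil divisor representing $\det\sF$ only by $\pi$-exceptional divisors and then invoking the projection formula~\eqref{eq:projection} exactly as in the proof of Lemma~\ref{lem:18}; but the route via Lemma~\ref{lem:x7} is shorter.
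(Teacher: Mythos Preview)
Your proof is correct and follows essentially the same route as the paper: both arguments apply Lemma~\ref{lem:x7} to obtain $\mu_{\wtilde\alpha}(\pi^{[*]}\sF) = \mu_\alpha(\pi_*\pi^{[*]}\sF)$ and then observe that $\sF$ and $\pi_*\pi^{[*]}\sF$ agree in codimension one, hence have the same determinant and slope. The paper states this last point without further comment, whereas you spell out the justification via the big open locus $U$, the projection formula, and $\pi_*\sO_{\wtilde X} = \sO_X$; your added detail is correct and welcome.
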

\begin{proof}
  We know from Claim~(\ref{lem:x7}.\ref{il:lemx72}) of Lemma~\ref{lem:x7} that
  $\mu_{\wtilde \alpha}( \pi^{[*]} \sF) = \mu_{\alpha}( \pi_* \pi^{[*]}
  \sF)$. Since $\sF$ and $\pi_* \pi^{[*]} \sF$ agree in codimension one, we have
  $\det \sF \cong \det \pi_* \pi^{[*]} \sF$. This immediately implies the claim.
\end{proof}

\begin{cor}[Invariance of $\mu^{\max}$ and semistability under pull-back]\label{cor:IV4}
  In Setting~\ref{set:AC}, let $\sF$ and $\wtilde \sF$ be sheaves on $X$ and
  $\wtilde X$, respectively, both torsion free in codimension one. Assume that
  $\wtilde \sF$ is isomorphic to $\pi^*(\sF)$ away from the $\pi$-exceptional
  set.
  \begin{enumerate-p}
  \item\label{il:caspar} Given numbers $(r,\mu) \in \bN \times \bQ$, then $\sF$
    contains a subsheaf $\sG$ of rank $r$ and slope $\mu_\alpha(\sG)=q$ if and
    only if $\wtilde \sF$ contains a subsheaf $\wtilde \sG$ of rank $r$ and
    slope $\mu_{\wtilde \alpha}(\wtilde \sG)=q$.
  \end{enumerate-p}
  In particular,
  \begin{enumerate-p}
    \setcounter{enumi}{\value{equation}}
  \item\label{il:melchior} we have equality $\mu_{\alpha}^{\max} (\sF) =
    \mu_{\wtilde \alpha}^{\max} (\wtilde \sF)$, and
  \item\label{il:balthasar} the sheaf $\sF$ is semistable with respect to $\alpha$
    if and only if $\wtilde \sF$ is semistable with respect to $\wtilde \alpha$.
  \end{enumerate-p}
\end{cor}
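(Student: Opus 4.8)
The plan is to establish Assertion~(\ref{cor:IV4}.\ref{il:caspar}) first, and then to read off Assertions~(\ref{cor:IV4}.\ref{il:melchior}) and (\ref{cor:IV4}.\ref{il:balthasar}) from it. Write $\Exc \subseteq \wtilde X$ for the $\pi$-exceptional locus, and set $W := \wtilde X \setminus \Exc$ and $V := X \setminus \pi(\Exc)$. Since $\pi$ is proper and birational and $X$ is normal, $\pi$ restricts to an isomorphism $\pi|_W : W \xrightarrow{\ \sim\ } V$ and $\codim_X (X \setminus V) \geq 2$; in particular $\rank \wtilde\sF = \rank \sF$. As $\wtilde\sF$ is isomorphic to $\pi^*\sF$ away from $\Exc$, the isomorphism $\pi|_W$ furnishes an identification $\wtilde\sF|_W \cong \sF|_V$, and applying $\pi_*$ also gives an identification $(\pi_*\wtilde\sF)|_V \cong \sF|_V$. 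The one elementary fact used repeatedly is this: if two coherent sheaves on the normal variety $X$ are torsion free in codimension one and agree over $V$, then their determinants --- being reflexive of rank one --- are isomorphic, because a reflexive sheaf on $X$ equals the push-forward of its restriction to the complement of any closed set of codimension $\geq 2$; in particular such sheaves have the same $\alpha$-slope. (On $\wtilde X$ the corresponding invariance of slope along $\Exc$ is the mechanism behind Lemma~\ref{lem:18}, but the route below compares sheaves on $\wtilde X$ only through their push-forwards, so Lemma~\ref{lem:x7} will suffice there.)

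For Assertion~(\ref{cor:IV4}.\ref{il:caspar}), fix $(r,\mu) \in \bN \times \bQ$. Suppose first that $\sG \subseteq \sF$ has rank $r$ and $\mu_\alpha(\sG) = \mu$. Transport $\sG|_V$ to a subsheaf $\wtilde\sG_0 \subseteq \wtilde\sF|_W$ under the identification above, and put $\wtilde\sG := \ker\!\bigl( \wtilde\sF \to (\iota_W)_*(\wtilde\sF|_W / \wtilde\sG_0) \bigr)$, where $\iota_W : W \into \wtilde X$ is the inclusion. Being a quasi-coherent subsheaf of the coherent sheaf $\wtilde\sF$ on a noetherian scheme, $\wtilde\sG$ is coherent; it is torsion free in codimension one; and $\wtilde\sG|_W = \wtilde\sG_0$, so $\rank \wtilde\sG = r$. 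Now $\pi_*\wtilde\sG \subseteq \pi_*\wtilde\sF$ is torsion free in codimension one by Claim~(\ref{lem:x7}.\ref{il:lemx71}), has rank $r$, and $(\pi_*\wtilde\sG)|_V$ corresponds to $\sG|_V$ under $(\pi_*\wtilde\sF)|_V \cong \sF|_V$; hence $\det(\pi_*\wtilde\sG) \cong \det\sG$, so $\mu_\alpha(\pi_*\wtilde\sG) = \mu$, and Claim~(\ref{lem:x7}.\ref{il:lemx72}) gives $\mu_{\wtilde\alpha}(\wtilde\sG) = \mu_\alpha(\pi_*\wtilde\sG) = \mu$. Conversely, given $\wtilde\sG \subseteq \wtilde\sF$ of rank $r$ with $\mu_{\wtilde\alpha}(\wtilde\sG) = \mu$, Lemma~\ref{lem:x7} shows $\pi_*\wtilde\sG \subseteq \pi_*\wtilde\sF$ is torsion free in codimension one, of rank $r$, with $\mu_\alpha(\pi_*\wtilde\sG) = \mu$; extending the subsheaf $(\pi_*\wtilde\sG)|_V \subseteq (\pi_*\wtilde\sF)|_V \cong \sF|_V$ to $\sG := \ker\!\bigl( \sF \to (\iota_V)_*(\sF|_V / (\pi_*\wtilde\sG)|_V) \bigr)$ produces a coherent subsheaf of $\sF$ of rank $r$ that agrees with $\pi_*\wtilde\sG$ over $V$, whence $\det\sG \cong \det(\pi_*\wtilde\sG)$ and $\mu_\alpha(\sG) = \mu$. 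This proves Assertion~(\ref{cor:IV4}.\ref{il:caspar}).

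Assertion~(\ref{cor:IV4}.\ref{il:melchior}) is then immediate: $\mu_\alpha^{\max}(\sF)$ is by definition the supremum of $\mu_\alpha(\sG)$ over coherent subsheaves $\sG \subseteq \sF$, each of which has rank in $\{0,\dots,\rank\sF\} = \{0,\dots,\rank\wtilde\sF\}$, and by Assertion~(\ref{cor:IV4}.\ref{il:caspar}) a given rank and slope are realized inside $\sF$ exactly when they are realized inside $\wtilde\sF$, so the two suprema coincide. Finally, applying the determinant comparison over $V$ to $\sF$ and $\pi_*\wtilde\sF$ themselves gives $[\det\sF] = [\det\pi_*\wtilde\sF]$, hence $\mu_\alpha(\sF) = \mu_\alpha(\pi_*\wtilde\sF) = \mu_{\wtilde\alpha}(\wtilde\sF)$ by Claim~(\ref{lem:x7}.\ref{il:lemx72}). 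Since $\alpha$-semistability of a sheaf means exactly that its $\mu^{\max}$ equals its slope, combining this equality with Assertion~(\ref{cor:IV4}.\ref{il:melchior}) yields Assertion~(\ref{cor:IV4}.\ref{il:balthasar}).

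The whole argument is essentially bookkeeping built on the invariance Lemma~\ref{lem:x7}. The only step requiring a little care is the extension of a subsheaf given over the large open set $V$ (resp.\ $W$) to a genuine coherent subsheaf of $\sF$ (resp.\ $\wtilde\sF$) without altering rank or slope; this is handled by the kernel construction above together with the observation that the complementary locus is invisible to the slope --- by codimension on $X$, and by passing to push-forwards (Lemma~\ref{lem:x7}, cf.\ also Lemma~\ref{lem:18}) on $\wtilde X$. No genuinely new difficulty arises beyond assembling the invariance lemmas of Section~\ref{ssec:AC}.
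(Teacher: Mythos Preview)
Your argument is correct and follows the same overall strategy as the paper: establish Assertion~(\ref{cor:IV4}.\ref{il:caspar}) by extending subsheaves across the exceptional locus and invoking the invariance lemmas of Section~\ref{ssec:AC}, then read off the remaining two assertions. There are two minor technical differences worth noting. First, the paper handles the forward direction by forming the reflexive pull-back $\pi^{[*]}\sG$ and applying Lemma~\ref{lem:17} together with Lemma~\ref{lem:18}, whereas you bypass both by immediately pushing your extended sheaf $\wtilde\sG$ back down and invoking only Lemma~\ref{lem:x7}; this is a small economy, since Lemma~\ref{lem:x7} already encodes the relevant projection-formula content. Second, for the extension of subsheaves across a closed set the paper cites Grothendieck's extension theorem \cite[I.Thm.~9.4.7]{EGA1}, while you write down the standard kernel construction $\ker\bigl(\sF \to (\iota_V)_*(\sF|_V/\sG|_V)\bigr)$ explicitly; these are the same thing, your version being self-contained. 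Neither difference is substantive.
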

\begin{proof}
  Items~(\ref{cor:IV4}.\ref{il:melchior}) and (\ref{cor:IV4}.\ref{il:balthasar})
  are immediate consequences of (\ref{cor:IV4}.\ref{il:caspar}). To prove the
  latter, let $\sG \subseteq \sF$ be any subsheaf with $\rank \sG = r$ and
  $\mu_\alpha(\sG) = q$. Lemma~\ref{lem:17} asserts that $\mu_{\wtilde
    \alpha}(\pi^{[*]}\sG) = q$. By Grothendieck's extension theorem for coherent
  subsheaves, \cite[I.Thm.~9.4.7 and 0.Sect.~5.3.2]{EGA1}, there exists a
  subsheaf $\wtilde \sG \subset \wtilde \sF$ which agrees with $\pi^{[*]}\sG$
  wherever $\pi$ is an isomorphism. It is clear that $\rank \wtilde \sG =
  r$. Lemma~\ref{lem:18} shows that $\mu_{\wtilde \alpha}(\wtilde \sG)=q$.

  Conversely, assume there exists a subsheaf $\wtilde \sG \subseteq \wtilde \sF$
  with $\rank \wtilde \sG = r$ and $\mu_{\wtilde \alpha}(\wtilde \sG) =
  q$. Lemma~\ref{lem:x7} shows that $\mu_{\alpha}(\pi_* \wtilde \sG) = q$, and
  another application of the extension theorem for coherent subsheaves gives the
  existence of a sheaf $\sG \subseteq \sF$ which agrees with $\pi_* \wtilde \sG$
  in codimension one. It is therefore clear that $\det \pi_* \wtilde \sG = \det
  \sG$, so that $\mu_\alpha(\sG) = q$. Since $\rank \sG = r$, this finishes the
  proof of (\ref{cor:IV4}.\ref{il:caspar}) and hence of Corollary~\ref{cor:IV4}.
\end{proof}

\subsection{Proof of Proposition~\ref*{prop:SSreflTensor}}
\label{pf:prop14}

We maintain notation and assumptions of Proposition~\ref{prop:SSreflTensor}. Let
$\pi: \wtilde X \to X$ be a resolution of singularities and set $\wtilde \alpha
:= \pi^*(\alpha)$. Set
$$
\sA := (\sF \otimes \sG)^{**} \quad \text{and} \quad \wtilde \sA := \bigl(
\pi^{[*]}(\sF) \otimes \pi^{[*]}(\sG) \bigr)^{**}.
$$
It follows immediately from the definition that $\wtilde \sA$ is isomorphic to
$\pi^{[*]}(\sA)$ wherever $\pi$ is an isomorphism.

\begin{proof}[Proof of Assertion~(\ref{prop:SSreflTensor}.\ref{il:p161})]
  Immediate from Fact~(\ref{fact:SSreflTensor}.\ref{il:asx}) and
  Corollary~(\ref{cor:IV4}.\ref{il:melchior}).
\end{proof}

\begin{proof}[Proof of Assertion~(\ref{prop:SSreflTensor}.\ref{il:p162})]
  Assume that $\sF$ and $\sG$ are $\alpha$-semistable. In this setup,
  Item~(\ref{cor:IV4}.\ref{il:balthasar}) of Corollary~\ref{cor:IV4} says that
  \begin{itemize}
  \item the sheaves $\pi^{[*]}(\sF)$ and $\pi^{[*]}(\sG)$ are semistable with
    respect to $\wtilde \alpha$, and
  \item $\sA$ is $\alpha$-semistable if and only if $\wtilde \sA$ is $\wtilde
    \alpha$-semistable.
  \end{itemize}
  Semistability of $\wtilde \sA$ being guaranteed by
  Fact~(\ref{fact:SSreflTensor}.\ref{il:bsx}), the claim thus follows.
\end{proof}

\begin{proof}[Proof of Assertion~(\ref{prop:SSreflTensor}.\ref{il:p163})]
  Since the arguments used to prove semistability for symmetric and exterior
  powers are the same, we consider the case of symmetric powers only. Recall
  from \cite[p.~148]{OSS} that there exists a Zariski-open subset $X^\circ
  \subseteq X$ with $\codim_X X\setminus X^\circ \geq 2$, such
  that $\sF|_{X^\circ}$ is locally free. In particular, there exists a direct
  sum decomposition,
  $$
  \sF|_{X^\circ}^{\otimes q} = \Sym^q \sF|_{X^\circ} \,\, \oplus \,\, \factor
  \sF|_{X^\circ}^{\otimes q}.\Sym^q \sF|_{X^\circ}..
  $$
  Using that the complement of $X^\circ$ is small, we obtain a direct sum
  decomposition of reflexive sheaves,
  $$
  \bigl(\sF^{\otimes q}\bigr)^{**} = \Sym^{[q]} \sF \,\, \oplus \,\,
  \left( \factor \sF^{\otimes q}.\Sym^q \sF.
  \right)^{**}.
  $$
  Semistability of $\bigl(\sF^{\otimes q}\bigr)^{**}$ as asserted in
  (\ref{prop:SSreflTensor}.\ref{il:p162}) then shows the last remaining claim,
  finishing the proof of Proposition~\ref{prop:SSreflTensor}.
\end{proof}

\providecommand{\bysame}{\leavevmode\hbox to3em{\hrulefill}\thinspace}
\providecommand{\MR}{\relax\ifhmode\unskip\space\fi MR }
\providecommand{\MRhref}[2]{%
  \href{http://www.ams.org/mathscinet-getitem?mr=#1}{#2}
}
\providecommand{\href}[2]{#2}

\end{document}